\numberwithin{equation}{section}
\newcommand\blfootnote[1]{%
  \begingroup
  \renewcommand\thefootnote{}\footnote{#1}%
  \addtocounter{footnote}{-1}%
  \endgroup
}
\newcommand{\ZZ}{\mathbb{Z}}
\newcommand{\NN}{\mathbb{N}}
\newcommand{\RR}{\mathbb{R}}
\newcommand{\EE}{\varepsilon}
\newcommand{\DD}{\textnormal{D}}
\newcommand{\Div}{\textnormal{div}}
\newcommand{\supp}{\textnormal{supp }}
\newcommand{\Curl}{\textnormal{curl}}
\newtheorem{Theo}{Theorem}[section]
\newtheorem{lem}[Theo]{Lemma}
\newtheorem{cor}[Theo]{Corollary}
\newtheorem{prop}[Theo]{Proposition}
\theoremstyle{plain}
\theoremstyle{definition}
\newtheorem{defi}[Theo]{Definition}
\theoremstyle{remark}
\newtheorem{Rema}[Theo]{Remark}
\newtheorem*{rema*}{Remark}
\author[H. Houamed]{Haroune Houamed}
\address{CNRS, LJAD, Université Co\^te d'Azur\\ Département de Mathématiques\\ Nice,  France}
\email{haroune.houamed@univ-cotedazur.fr}
\author[M. Zerguine]{Mohamed Zerguine}
\address{LEDPA, Universit\'e de Batna --2--\\ Facult\'e des Math\'ematiques et Informatique\\ D\'epartement de Math\'ematiques\\ 05000 Batna Alg\'erie}
\email{m.zerguine@univ-batna2.dz}
\begin{document}

\title[On the global solvability of the axisymmetric Boussinesq system with critical regularity]
{On the global solvability of the axisymmetric Boussinesq system with critical regularity}
\maketitle

\begin{abstract} The current paper is principally motivated by establishing the global well-posedness to the three-dimensional Boussinesq system with zero diffusivity in the setting of axisymmetric flows without swirling with $v_0\in H^{\frac12}(\RR^3)\cap \dot{B}^{0}_{3,1}(\RR^3)$ and density $\rho_0\in L^2(\RR^3)\cap \dot{B}^{0}_{3,1}(\RR^3)$. This respectively enhances the two results recently accomplished in \cite{Danchin-Paicu1, Hmidi-Rousset}. Our formalism is inspired, in particular for the first part from \cite{Abidi} concerning the axisymmetric Navier-Stokes equations once $v_0\in H^{\frac12}(\RR^3)$ and external force $f\in L^2_{loc}\big(\RR_{+};H^{\beta}(\RR^3)\big)$, with $\beta>\frac14$. This latter regularity on $f$ which is the density in our context is helpless to achieve the global estimates for Boussinesq system. This technical defect forces us to deal once again with a similar proof to that of \cite{Abidi} but with $f\in L^{\beta}_{loc}\big(\RR_{+};L^2(\RR^3))$ for some $\beta>4$. Second, we explore the gained regularity on the density by considering it as an external force in order to apply the study already obtained to the Boussinesq system. 
\end{abstract}

\noindent 
\blfootnote{{\it keywords and phrases:}
Boussinesq sytem; Navier Stokes equations; Global wellposedness; Axisymmetric solutions; Critical spaces.}  
\blfootnote{2010 MSC: 76D03, 76D05, 35B33, 35Q35.}

\tableofcontents

\section{Introduction} The axisymmetric Boussinesq system in three dimensions of space with zero diffusivity is a singular perturbation at level zero of the Navier-Stokes equations. Such system reads as follows

\begin{equation}\label{B-mu}
\left\{ \begin{array}{ll} 
\partial_{t}v+v\cdot\nabla v-\mu\Delta v+\nabla p=\rho \vec e_3 & \textrm{if $(t,x)\in \RR_+\times\RR^3$,}\\
\partial_{t}\rho+v\cdot\nabla \rho=0 & \textrm{if $(t,x)\in \RR_+\times\RR^3$,}\\ 
\Div v=0, &\\ 
({v},{\rho})_{| t=0}=({v}_0,{\rho}_0).  
\end{array} \right.\tag{B$_{\mu}$}
\end{equation}
Above, $v(t,x)\in\RR^3$ refers to the velocity vector field localized in $x\in\RR^3$ at a time $t$, $p(t,x)\in\RR$ is the force of internal pressure which acts to enforce the incompressibility constraint $\Div v=0$ and it may be determined by a type of Poisson's equation and $\rho(t,x)\in\RR^{\star}_{+}$ stands either the temperature in the context of thermal convection or the mass density in the modeling of geophysical fluids. The condition $\Div v=0$, meaning that the volume of the fluid elements does not change (iso-volume) over time and $\mu>0$ is well- known as the 'kinematic' viscosity's parameter of the fluid.

\hspace{0.5cm}The key assumption of the Boussinesq system \eqref{B-mu} is that the variations in density are small and their effect is neglected everywhere except in the buoyancy term $\rho\vec e_3$ that driving the fluid motion in the direction $\vec e_3= (0,0,1) $.

\hspace{0.5cm}The Boussinesq system occurs principally in the dynamics of geophysical fluids which illustrates many models coming from atmospheric or oceanographic turbulence where rotation and stratification play an important role, see e.g. \cite{Pedlosky}.

\hspace{0.5cm}Mathematically, this model is essentially limited to the case of dimension two due to a formal resemblance (in the case $\mu=0$) with three-dimensional axisymmetric swirling flows. It can be shown that the solution develops singularities at time $t$ is related at the simultaneous blow-up of $\nabla \rho$ and the vorticity in $L^1_{t}L^\infty$, see \cite{Weinan-Shu}. Unfortunately, determining whether these quantities actually blow-up seems at least as difficult as addressing the similar problem for the system of Euler incompressible in dimension three of spaces.

\hspace{0.5cm}For better understanding the analysis of \eqref{B-mu}, we come first to the particular case which we assume that the density is constant. Thus, we obtain the classical Navier-Stokes equations

\begin{equation}\label{NS(mu)}
\left\{ \begin{array}{ll} 
\partial_{t}v+v\cdot\nabla v-\mu\Delta v+\nabla p=0& \textrm{if $(t,x)\in \RR_+\times\RR^3$,}\\ 
\Div v=0, &\\ 
v_{| t=0}=v_0.  
\end{array} \right.\tag{NS$_{\mu}$}
\end{equation}
The significative leap was J. Leray's paper \cite{Leray} in the thirties of last century where succeeded to build a weak family of solutions to \eqref{NS(mu)} globally in time in the energy space for any dimension, via compactness method. Even though, the uniqueness issue is known only in dimension two. Afterwards, H. Fujita and T. Kato formulated in \cite{Fujita-Kato} a mild-solutions for \eqref{NS(mu)} locally in time for initial data in critical Sobolev spaces $\dot H^{\frac{N}{2}-1}(\RR^N)$ which respect the so-called invariant by scaling. A similar topic are developed in several functional spaces alike $L^N(\RR^N), \dot B^{-1+\frac{N}{p}}_{p,\infty}$ and $BMO^ {-1} $, for other connected subjects we refer to \cite{Gallay-Sverak,Hmidi-Zerguine1,Houamed,Kato,Koch-Tataru, Lemarie-Rieusset,Robinson-Rodrigo-Sadowski,Planchon}. It should be noted that these kinds of solutions are globally in time apart from small initial data with respect to the viscosity parameter, except in dimension two. Thus, in general setting the global well-posedness for \eqref{NS(mu)} is a till now an open problem in PDEs. For this purpose, it is legitimate to search the "best" conditions in the sense that guarantee the existence and uniqueness solution for \eqref{NS(mu)}. This latter gives the opportunity to many authors to require that the velocity enjoying a geometric condition like {\it axisymmetric without swirl}. Namely, the velocity vector field can be written in cylindrical basis of $\RR^3$ in the following way.

\begin{equation*}
v(t, x)=v^{r}(t, r, z)\vec{e}_{r}+v^{z}(t, r, z)\vec{e}_{z},
\end{equation*}
where for every $x=(x_1,x_2,z)\in\RR^3$ we have
\begin{equation*}
x_1=r\cos\theta,\quad x_2=r\sin\theta,\quad r\ge0,\quad 0\le\theta<2\pi. 
\end{equation*}
Above, the triplet  $(\vec{e}_{r}, \vec{e}_{\theta}, \vec{e}_{z})$ represents the usual frame of unit vectors in the radial, azimutal and vertical directions with the notation
\begin{equation*}
\vec{e}_r=\Big(\frac{x_1}{r},\frac{x_2}{r},0\Big),\quad \vec{e}_{\theta}=\Big(-\frac{x_2}{r},\frac{x_1}{r},0\Big),\quad \vec{e}_{z}=(0,0,1).
\end{equation*}
Since, the vorticity in $\RR^3$ is defined by $\omega=\Curl v=\nabla\times v$, so if we aplly the $\Curl$ operator to \eqref{NS(mu)} we obtain that the vorticity takes the form $\omega\triangleq\omega_{\theta}\vec{e}_{\theta}$ with $\omega_{\theta}=\partial_{z}v^r-\partial_{r}v^z$ solving the following nonlinear parabolic equation
\begin{equation}\label{omega-theta-equation}
\partial_{t}\omega_{\theta}+(v\cdot\nabla)\omega_{\theta}-\mu\Big(\Delta\omega_{\theta}-\frac{\omega_{\theta}}{r^2}\Big)=\frac{v^r}{r}\omega_{\theta},
\end{equation}
with the notation $v\cdot\nabla=v^r\partial_r+v^z\partial_z$ and $\Delta=\partial^{2}_{r}+\frac{\partial_r}{r}+\partial_{z}^2$. Deposit $\zeta=\frac{\omega_{\theta}}{r}$, a straightforward computations lead to
\begin{equation}\label{zeta-equation}
\left\{ \begin{array}{ll} 
\partial_{t}\zeta+v\cdot\nabla \zeta-\mu\Big(\Delta +\frac{2}{r}\partial_r\Big)\zeta=0,\\
\zeta_{| t=0}=\zeta_0.
\end{array} \right.
\end{equation}
The fact that, the dissipative operator $(\Delta +\frac{2}{r}\partial_r)$ has a good sign, consequently, we may estimate $L^p-$norms of $\zeta$ globally in time, that is
\begin{equation}\label{zeta-r}
\|\zeta(t)\|_{L^p}\le \|\zeta_0\|_{L^p}, \quad p\in[1,\infty],\; t\ge0. 
\end{equation}
In fact, M. Ukhoviskii and V. Yudovich \cite{ Ukhovskii-Yudovich} independently O. Ladyzhenskaya \cite{Ladyzhenskaya} showed that the bounds \eqref{zeta-r} is considered as a bulwark to prohibit the formation of singularities in finite time for axisymmetric flows without swirl. More precisely, global existence and uniqueness were established for axisymmetric initial data $v_0\in H^1$ and $\omega_0,\frac{\omega_0}{r}\in L^2\cap L^\infty$. Lately, this latter was improved by S. Leonardi, J. M\`alek, J. Nec\u as and M. Pokorn\'y \cite{Leonardi-Malek-Necas-Pokorny} for $v_0\in H^2$ and external force $f\in L^2_{loc}(\RR_+;H^2)$ in the sense that $v_0\in H^2$ regains that $\omega_0,\frac{\omega_0}{r}\in L^2\cap L^\infty$. Fairly recent, H. Abidi \cite{Abidi} relaxed the last result by assuming that $v_0$ is in the critical Sobolev space $ H^{\frac{1}{2}}$ and $f\in L^2_{loc}(\RR_+;H^\beta)$ with $\beta>\frac14$. For critical regularities, that is $v_0\in {B}_{p, 1}^{1+\frac{3}{p}},$ with $1\le p\le \infty$ and $\frac{\omega_0}{r}\in L^{3,1}$, the second author and Hmidi investigated in \cite{Hmidi-Zerguine1} that \eqref{NS(mu)} admits a unique global solution uniformly with respect to the viscosity in the absence of Beale-Kato-Majda criterion, see \cite{Beale-Kato-Majda}. Furthermore, they also provided the rate of convergence whenever the viscosity goes to zero. 

\hspace{0.5cm}For the Boussinesq system \eqref{B-mu}, an intensive attention has been gained to the local/global well-posedness problem. In particular, in two dimensions of space and its dissipative counterpart phenomena for the density, refer to \cite{Abidi-Hmidi,Cannon-Dibenedetto,Chae,Chae-Nam0,Chae-Nam1,Danchin-Paicu0,Guo,Hmidi-Keraani0,Hmidi-Keraani1,Hmidi-Keraani-Rousset0,Hmidi-Zerguine0,Hmidi-Zerguine2,Hou-Li,Houamed-Dreyfuss,Liu-Wang-Zhang,Miao-Xue,Weinan-Shu,Zerguine} for a more complete history and references. For three-dimensional, the global regularity issue for the system \eqref{B-mu} in various spaces has received a considerable attention and widely studied in the last two decades. Worth mentioning, Danchin and Paicu showed in \cite{Danchin-Paicu1} that \eqref{B-mu} is well-posed in time for Leray's and Fujita-Kato's solutions in any dimension. Next, in axisymmetric case, Abidi-Hmidi-Keraani have been established in \cite{Abidi-Hmidi-Keraani} that \eqref{B-mu} possesses a unique global solution by rewriting it under vorticity-density formulation.

\begin{equation}\label{VD11}
\left\{ \begin{array}{ll} 
\partial_{t}\omega_{\theta}+v\cdot\nabla \omega_{\theta}-\big(\Delta-\frac{1}{r^2}\big)\omega_{\theta}=\frac{v^r}{r}\omega_{\theta}-\partial_r\rho & \\
\partial_{t}\rho+v\cdot\nabla \rho=0 & \\ (\omega_\theta,{\rho})_{| t=0}=({\omega}_0,{\rho}_0),  \end{array} \right.
\end{equation}
Consequently, the quantity $\zeta=\frac{\omega_{\theta}}{r}$ solving the equation
\begin{equation}\label{Pi-Boussinesq}
\partial_{t}\zeta+v\cdot\nabla \zeta-\big(\Delta +\frac{2}{r}\partial_r\big)\zeta=-\frac{\partial_r\rho}{r}.
\end{equation}   

They assumed that $v_0\in H^1(\RR^3),\;\xi_0\in L^{2}(\RR^3), \rho_0\in L^2\cap L^\infty$ with $\supp\rho_{0}\cap(Oz)=\emptyset$ and $P_{z}(\supp\rho_0)$ is a compact set in $\RR^3$ especially for dismissing the singularity $\frac{\partial_r\rho}{r}$, with $P_ {z} $ being the orthogonal projector over $ (Oz) $. Those results were enhanced later by Hmidi-Rousset in \cite{Hmidi-Rousset} by removing the assumption on the support of density. Their paradigm requires to assume that $(v_0,\rho_0)\in H^1(\RR^3)\times L^2(\RR^3)\cap B^ {0} _ {3,1} $ and $\frac{\omega_0}{r}\in L^2(\RR^3)$ and couples the two-equations of \eqref{VD11}  by introducing a new unknown which called {\it coupled function}.  

\hspace{0.5cm}The overall aim here is to show first that \eqref{NS(mu)} admits a unique global solution once $v_0\in H^{\frac12}(\RR^3)$ is an axisymmetric vector field and $f$ in $ L_{loc}^{\beta}\big(\RR_{+},L^2(\RR^3)\big)$, with $\beta>4$. Second, we exploit the previous result to prove that the system \eqref{B} has a unique global solution whenever $v_0\in H^{\frac12}(\RR^3)\cap\dot{B}^{0}_{3,1}(\RR^3)$ and $\rho_0\in L^{2}(\RR^3)\cap\dot{B}^{0}_{3,1}(\RR^3)$. This kind of result is considered as novel in the sense that interpolating the critical regularities following Danchin-Paicu \cite{Danchin-Paicu1} and the axisymmetric constraint of Hmidi-Rousset \cite{Hmidi-Rousset}.



\section{Statement of the main results}
\subsection{About Navier-Stokes equations with external force.}
The incompressible Navier-Stokes equations with the external force in the whole space with the viscosity $\mu=1$, still noted \eqref{NS} reads as follows.
\begin{equation}\label{NS}
\left\{ \begin{array}{ll} 
\partial_{t}v+v\cdot\nabla v-\Delta v+\nabla p=f& \textrm{if $(t,x)\in \RR_+\times\RR^3$,}\\ 
\Div v=0, &\\ 
{v}_{| t=0}={v}_0.  
\end{array} \right.\tag{NS}
\end{equation}
If $X$ is a spacial space, and $X^{(1)}$ is the space of tempered distributions $U$ such that $\nabla U$ belongs to $X$, then we denote the energy space associated to $X$ by
\begin{equation*} 
E_T(X) \triangleq L^\infty_T( X) \cap L^2_T(X^{(1)}).
\end{equation*}
The energy inequality associated to \eqref{NS} reads for all $T>0$ as follows
\begin{equation*} 
\|v\|_{L^\infty_T(L^2)} + \|\nabla v\|_{L^2_T(L^2)} \lesssim \|v_0\|_{L^2} + \|f\|_{L^2_T(\dot{H}^{-1} )}, \quad \mbox{if}\;f \in L^2_T(\dot{H}^{-1} ).
\end{equation*}
 and 
\begin{equation*}
\|v\|_{L^\infty_T(L^2)} + \|\nabla v\|_{L^2_T(L^2)} \lesssim \|v_0\|_{L^2} + (1+\sqrt{T})\|f\|_{L^2_T( {H}^{-1} )}, \quad\mbox{if} \;f\in L^2_T({H}^{-1} ).
\end{equation*}
The scenario of mild solutions will be done by rewriting \eqref{NS} in terms of a fixed point problem for a heat, semi-group, and thus obtain a unique solution $v$ which belongs to a functional space, for example $\dot H^{s}(\RR^N)$, such that the linear term $(\partial_t-\Delta)v$ and the nonlinear term ${\bf P}\Div(v\otimes v) $ have the same regularity, with ${\bf P} $ desinagtes the Leray's projector which acts on the divergence free vector field. This may involves the scaling invariance for \eqref{NS}. Indeed, if we denote by $v$ the solution of \eqref{NS} with data $v_0$ and $f$. Thus, for $\lambda>0$ the vector field
\begin{equation*}
v_{\lambda}: (t,x)\mapsto \lambda v(\lambda^2 t,\lambda x)
\end{equation*}
 with data 
\begin{equation*}
v_{0,\lambda}: x\mapsto \lambda v_0(\lambda x),\quad f_\lambda:(t,x)\mapsto \lambda^3f(\lambda^2 t,\lambda x)
\end{equation*}
is also a solution of \eqref{NS}. 


It is simple check that the Sobolev regularities according to Fujita-Kato for $N=3$ are $\dot H^{\frac12}(\RR^3)$ for $v_{0,\lambda}$, $L^2\big(\RR_+; \dot{H}^{-\frac{1}{2}}(\RR^3)\big)$ for $f_\lambda$ and $C\big(\RR_+;\dot H^{\frac12}\big)\cap L^2(\RR_+;\dot H^{\frac32}\big)$ for $v_{\lambda}$. Consequently, Fujita-Kato \cite{Fujita-Kato} succeed to recover \eqref{NS} globally in time in the following way.

\begin{prop}\label{Pr:1} Let $v_0\in \dot{H}^\frac{1}{2}(\RR^3)$ and $f \in L^2_T\big(\dot{H}^{-\frac{1}{2}}(\RR^3)\big) $, there exists some $c_0>0$ such that if
\begin{equation*} 
\|v_0\|_{\dot{H}^\frac{1}{2}}+ \|f\|_{L^{2}_{T}(\dot{H}^{-\frac{1}{2}}) }<c_0.
\end{equation*} 
Then \eqref{NS} has a unique global solution satisfying for all $T>0$,
\begin{equation*}
\|v\|_{L^\infty_T(\dot{H}^\frac{1}{2})} + \|\nabla v\|_{L^2_T(\dot{H}^\frac{1}{2})} \lesssim \|v_0\|_{\dot{H}^\frac{1}{2} } + \|f\|_{L^2_T(\dot{H}^{-\frac{1}{2}}) }.
\end{equation*}
\end{prop}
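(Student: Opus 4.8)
The plan is to run a Picard iteration on the Duhamel (mild) formulation of \eqref{NS} in a scaling critical energy space. Writing $\mathbf{P}$ for the Leray projector, a mild solution should solve
\[
v(t)=e^{t\Delta}v_0+\int_0^t e^{(t-s)\Delta}\mathbf{P}f(s)\,ds-B(v,v)(t),\qquad B(u,w)(t)\triangleq\int_0^t e^{(t-s)\Delta}\mathbf{P}\,\Div(u\otimes w)(s)\,ds,
\]
and I would look for $v$ in $X_T\triangleq C\big([0,T];\dot H^{\frac12}(\RR^3)\big)\cap L^2\big([0,T];\dot H^{\frac32}(\RR^3)\big)$, equipped with $\|v\|_{X_T}\triangleq\|v\|_{L^\infty_T\dot H^{1/2}}+\|\nabla v\|_{L^2_T\dot H^{1/2}}$. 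The whole argument then reduces to three estimates, all scaling invariant: (i) $\|e^{t\Delta}v_0\|_{X_T}\lesssim\|v_0\|_{\dot H^{1/2}}$; (ii) $\big\|\int_0^t e^{(t-s)\Delta}\mathbf{P}f\,ds\big\|_{X_T}\lesssim\|f\|_{L^2_T\dot H^{-1/2}}$; and (iii) $\|B(u,w)\|_{X_T}\lesssim\|u\|_{X_T}\|w\|_{X_T}$.

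Estimates (i) and (ii) are heat-kernel computations performed after the Fourier transform in $x$. For (i), $e^{-t|\xi|^2}\le1$ gives the $L^\infty_T\dot H^{1/2}$ bound, while $\int_0^\infty|\xi|^2 e^{-2t|\xi|^2}\,dt=\tfrac12$ together with the Plancherel theorem gives $\|\nabla e^{t\Delta}v_0\|_{L^2_T\dot H^{1/2}}\le\tfrac{1}{\sqrt2}\|v_0\|_{\dot H^{1/2}}$. For (ii) one first writes $\widehat{\mathbf{P}f}=|\xi|^{-1/2}h$ with $h\in L^2_{t,\xi}$ and $\|h\|_{L^2_{t,\xi}}=\|\mathbf{P}f\|_{L^2_T\dot H^{-1/2}}\lesssim\|f\|_{L^2_T\dot H^{-1/2}}$ ($\mathbf{P}$ being bounded on every $\dot H^s$); then applying $|\xi|^{3/2}$ to the Duhamel term turns it, frequency by frequency, into the time convolution of $h(\cdot,\xi)$ against the kernel $|\xi|^2 e^{-t|\xi|^2}\mathbf{1}_{t>0}$, whose $L^1_t$ norm equals $1$ uniformly in $\xi$, so Young's inequality in $t$ and Plancherel yield the $L^2_T\dot H^{3/2}$ bound; a Cauchy--Schwarz in $s$ against $\int_0^\infty|\xi|^2 e^{-2t|\xi|^2}dt=\tfrac12$ yields the $L^\infty_T\dot H^{1/2}$ bound. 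This is the maximal-regularity estimate for the heat semigroup.

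Estimate (iii) is the heart of the matter. Since $\mathbf{P}\,\Div$ is a homogeneous Fourier multiplier of degree $1$, the same maximal-regularity computation as in (ii) gives $\|B(u,w)\|_{X_T}\lesssim\|u\otimes w\|_{L^2_T\dot H^{1/2}}$. It then suffices to invoke the product law $\dot H^1(\RR^3)\cdot\dot H^1(\RR^3)\hookrightarrow\dot H^{1/2}(\RR^3)$ — which follows from a Bony paraproduct decomposition, or from the fractional Leibniz inequality together with $\dot H^1\hookrightarrow\dot W^{1/2,3}$ and $\dot H^1\hookrightarrow L^6$ — and combine it with the interpolation inequality $\|u\|_{L^4_T\dot H^1}\le\|u\|_{L^\infty_T\dot H^{1/2}}^{1/2}\|u\|_{L^2_T\dot H^{3/2}}^{1/2}\le\|u\|_{X_T}$ (coming from $[\dot H^{1/2},\dot H^{3/2}]_{1/2}=\dot H^1$ and Hölder in time). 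Together these give $\|u\otimes w\|_{L^2_T\dot H^{1/2}}\lesssim\|u\|_{L^4_T\dot H^1}\|w\|_{L^4_T\dot H^1}\lesssim\|u\|_{X_T}\|w\|_{X_T}$, hence (iii).

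With (i)--(iii) in hand, let $C_0$ be the largest of the three implied constants and set $R\triangleq2C_0\big(\|v_0\|_{\dot H^{1/2}}+\|f\|_{L^2_T\dot H^{-1/2}}\big)$. Choosing $c_0$ so small that $4C_0^2 c_0<1$, the smallness hypothesis makes $R$ small enough that the map $\Phi(v)\triangleq e^{t\Delta}v_0+\int_0^t e^{(t-s)\Delta}\mathbf{P}f\,ds-B(v,v)$ sends the closed ball $\{\|v\|_{X_T}\le R\}$ of $X_T$ into itself and is a contraction there, using $B(v,v)-B(\tilde v,\tilde v)=B(v-\tilde v,v)+B(\tilde v,v-\tilde v)$ and (iii) for the difference; Banach's fixed point theorem then produces a unique solution in that ball, satisfying $\|v\|_{X_T}\le R\lesssim\|v_0\|_{\dot H^{1/2}}+\|f\|_{L^2_T\dot H^{-1/2}}$, which is exactly the claimed estimate. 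Since neither $C_0$ nor $c_0$ depends on $T$, the solution is global, and uniqueness in all of $X_T$ (not merely in the ball) follows from a standard continuation/Gronwall argument. The only genuinely delicate point is (iii): one must keep every bound scaling invariant so that the relevant smallness quantity is precisely the scale-invariant norm $\|v_0\|_{\dot H^{1/2}}+\|f\|_{L^2_T\dot H^{-1/2}}$; everything else is bookkeeping with the heat kernel on the Fourier side.
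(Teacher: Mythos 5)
Your proof is correct. All three scaling-invariant estimates hold: the free-evolution bound, the heat maximal-regularity bound for the Duhamel force term, and the bilinear bound obtained from the product law $\dot H^1(\RR^3)\cdot\dot H^1(\RR^3)\hookrightarrow\dot H^{1/2}(\RR^3)$ combined with the interpolation $\|u\|_{L^4_T(\dot H^1)}\le\|u\|_{L^\infty_T(\dot H^{1/2})}^{1/2}\|u\|_{L^2_T(\dot H^{3/2})}^{1/2}$; the contraction with $4C_0^2c_0<1$ then closes, and the constants are indeed independent of $T$. One slip of the pen in step (ii): you should set $\widehat{\mathbf{P}f}=|\xi|^{1/2}h$, not $|\xi|^{-1/2}h$, so that $\|h\|_{L^2_{t,\xi}}=\|\mathbf{P}f\|_{L^2_T(\dot H^{-1/2})}$ and the kernel produced after multiplying by $|\xi|^{3/2}$ is the claimed $|\xi|^2e^{-t|\xi|^2}$ of unit $L^1_t$ mass; the rest of your computation is consistent with this corrected normalization. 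Regarding the comparison with the paper: Proposition \ref{Pr:1} is quoted from Fujita--Kato and is not reproved there, but the closely analogous Proposition \ref{Pr:2} (zero initial data, small force $f_h$) is established by a genuinely different route, namely an $\dot H^{1/2}$ energy estimate on the equation itself using the trilinear bound \eqref{Eq:7} of Proposition \ref{Pr:3}, followed by a continuity (bootstrap) argument on the first time at which $\|v_h(t)\|_{\dot H^{1/2}}$ reaches the threshold $1/4C$. Your mild-solution fixed point is the classical alternative: it delivers existence and uniqueness in one stroke from Banach's theorem and localizes all the analysis in the two linear heat estimates plus one product law, whereas the paper's energy route works directly on the PDE, needs the trilinear estimates of Proposition \ref{Pr:3} (themselves proved by product laws) and a separate uniqueness discussion, but extends more readily to the perturbed systems \eqref{NS-l} treated later where the force contains the linear operator $F_{v_h}$. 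Both approaches yield exactly the stated a priori bound.
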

\hspace{0.5cm}Without smallness condition, Abidi \cite{Abidi} developped another result of global well-posedness in time for \eqref{NS(mu)}, by interpolating the crtical space of Fujita-Kato and the axisymmetric condition of M\`alek- Nec\u as- Pokorn\'y. Especially he proved the following theorem.
\begin{Theo}\label{Th:1} Let $v_0\in H^\frac{1}{2}(\RR^3)$ be an axisymmetric divergence free vector field vector without swirl and $f\in L^2_{loc}\big(\RR_+;H^\beta(\mathbb{R}^3)\big)$ be an axisymmetric vector field, for some $\beta >\frac{1}{4}$. Then \eqref{NS} has a unique global axisymmetric satisfying
\begin{equation*}
v\in C \big(\RR_+;H^\frac{1}{2}\big) \cap L^2_{loc}\big(\RR_+;H^\frac{3}{2}\big).
\end{equation*}
\end{Theo}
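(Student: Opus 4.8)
The proof combines the Fujita--Kato local theory in the critical space with a global a priori bound drawn from the axisymmetric-without-swirl structure, in the spirit of Ladyzhenskaya--Ukhovskii--Yudovich. First I would construct a unique maximal solution $v\in C\big([0,T^*);H^{1/2}\big)\cap L^2_{loc}\big([0,T^*);H^{3/2}\big)$ by a fixed-point argument on the Duhamel formulation: write $v=e^{t\Delta}v_0+\int_0^t e^{(t-s)\Delta}\mathbf{P}f\,ds+w$, estimate the two linear contributions in $E_T(H^{1/2})=L^\infty_T(H^{1/2})\cap L^2_T(H^{3/2})$ (the $L^2$ piece by the energy inequalities recalled above, using $H^\beta\hookrightarrow H^{-1}$, the $\dot H^{1/2}$ piece by heat smoothing), and solve the equation for $w$ by contraction in $L^\infty_T(\dot H^{1/2})\cap L^2_T(\dot H^{3/2})$ on a short interval whose length depends only on the data; uniqueness in this class is the standard Gronwall estimate on the difference of two solutions. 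Since \eqref{NS}, $v_0$ and $f$ are axisymmetric without swirl, so is $v$. By the continuation criterion for $H^1$-solutions of \eqref{NS} with $f\in L^2_{loc}(L^2)$, the solution is global as soon as $\|v\|_{L^\infty([t_0,T];H^1)}<\infty$ for some $t_0>0$ and every finite $T$, so this is what I will prove.

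Because $\omega_0/r$ need not belong to $L^2$ when $v_0$ is only in $H^{1/2}$, I would restart the estimates at a positive time: since $v\in L^2_{loc}\big([0,T^*);H^{3/2}\big)$, fix $t_0\in(0,T^*)$ with $v(t_0)\in H^{3/2}\hookrightarrow H^1$ and work on $[t_0,T]$ for arbitrary $T$; the parabolic smoothing at time $t_0$ also places the axisymmetric quantity $\zeta(t_0):=\omega_\theta(t_0)/r$ in the scaling-adapted space in which the next step will be carried out. The $L^2$ energy inequality gives $\|v\|_{L^\infty_T(L^2)}+\|\nabla v\|_{L^2_T(L^2)}\le C\big(T,\|v_0\|_{L^2},\|f\|_{L^2_T(H^\beta)}\big)$. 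The axisymmetric structure then enters through $\zeta=\omega_\theta/r$, which solves (compare \eqref{zeta-equation})
$$\partial_t\zeta+v\cdot\nabla\zeta-\big(\Delta+\tfrac{2}{r}\partial_r\big)\zeta=\tfrac{(\curl f)_\theta}{r},$$
a forced transport--diffusion equation whose dissipative operator $\Delta+\tfrac{2}{r}\partial_r$ has the right sign (being, after the change of variables identifying it with the radial Laplacian in $\RR^5$, genuinely parabolic) and whose transport term is skew-symmetric since $\Div v=0$. Exploiting the smoothing of this equation together with an $L^p$/negative-index energy estimate, one propagates $\zeta$ globally in a scaling-adapted Lebesgue--Lorentz space, provided the forcing term can be controlled; concretely one needs a bound of the form $\big\|\tfrac{(\curl f)_\theta}{r}\big\|_{L^1_T(L^2\cap L^{3,1})}$ (or its parabolic $L^2_T$-counterpart), which is obtained by a Hardy inequality in the $(r,z)$ variables together with a Sobolev embedding and interpolation against the dissipation. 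This is exactly where the hypothesis $\beta>\tfrac14$ is used, and it is sharp for this scheme.

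Granting the global control of $\zeta$, the axisymmetric Biot--Savart law yields $\big\|\tfrac{v^r}{r}(t)\big\|_{L^\infty}\lesssim\|\zeta(t)\|_{L^{3,1}}$, hence $\tfrac{v^r}{r}\in L^1_T(L^\infty)$. Testing the vorticity equation $\partial_t\omega_\theta+v\cdot\nabla\omega_\theta-(\Delta-\tfrac{1}{r^2})\omega_\theta=\tfrac{v^r}{r}\omega_\theta+(\curl f)_\theta$ (compare \eqref{omega-theta-equation}) against $\omega_\theta$ and integrating by parts in the forcing term so as to keep the derivative on $v$ rather than on $f$, one obtains
$$\tfrac{d}{dt}\|\omega_\theta\|_{L^2}^2+\|\nabla\omega_\theta\|_{L^2}^2+\|\omega_\theta/r\|_{L^2}^2\lesssim\big\|\tfrac{v^r}{r}\big\|_{L^\infty}\|\omega_\theta\|_{L^2}^2+\|f\|_{L^2}^2,$$
and Gronwall (using the previous step and $f\in L^2_{loc}(L^2)$) gives $\omega_\theta\in L^\infty_{[t_0,T]}(L^2)\cap L^2_{[t_0,T]}(H^1)$, i.e. $v\in L^\infty_{[t_0,T]}(H^1)\cap L^2_{[t_0,T]}(H^2)$ for every $T$. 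By the continuation criterion this forces $T^*=\infty$, and combined with the local theory on $[0,t_0]$ and interpolation it yields $v\in C\big(\RR_+;H^{1/2}\big)\cap L^2_{loc}\big(\RR_+;H^{3/2}\big)$, uniqueness being inherited from the critical local theory. The main obstacle is the control of the forcing term $\tfrac{(\curl f)_\theta}{r}$ in the $\zeta$-equation under the sole assumption $f\in L^2_{loc}(H^\beta)$ with $\beta>\tfrac14$: it dictates the choice of the working space for $\zeta$, requires a delicate Hardy/interpolation argument, and is where the restriction $\beta>\tfrac14$ is unavoidable --- which, as the authors indicate, is precisely why a different time-integrability on the density is needed in the Boussinesq application.
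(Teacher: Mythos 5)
Your route is genuinely different from the one the paper follows, which is Abidi's Calder\'on--Gallagher--Planchon splitting: write $f=f_\ell+f_h$ and $v_0=v_{0,\ell}+v_{0,h}$, build a small global Fujita--Kato solution $v_h$ driven by the rough high-frequency part $f_h$ alone, and then obtain the global $H^1$ bound for the remaining piece $v_\ell$, which solves the modified system \eqref{NS-l} with the smooth datum $f_\ell$, from the axisymmetric quantities $\Gamma_\ell$ and $\Gamma_\ell/r$. You instead try to run the Ladyzhenskaya--Ukhovskii--Yudovich scheme directly on the full solution after an instantaneous-smoothing step. That plan has a genuine gap precisely at the point you yourself flag as ``the main obstacle'': the forcing term $(\nabla\times f)_\theta/r$ in the equation for $\zeta=\omega_\theta/r$ cannot be controlled under the sole hypothesis $f\in L^2_{loc}(H^\beta)$ with $\beta>\frac14$. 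Testing the $\zeta$-equation against $\zeta$ and placing the forcing in the dual of the dissipation requires $(\nabla\times f)_\theta/r\in L^2_T(\dot H^{-1})$; equivalently, after integrating by parts to move the curl and the factor $1/r$ onto $\zeta$, one is left needing a bound on $\|f^r/r\|_{L^2}+\|f^z/r\|_{L^2}$, and the Hardy inequality relative to the axis costs a full derivative on $f$ (a direct $L^2$ bound on $(\nabla\times f)_\theta/r$ costs two). So your scheme needs roughly $f\in L^2_{loc}(H^1)$, not $H^{1/4+}$, and no interpolation against the dissipation can close this, since the dissipation only buys one derivative on $\zeta$. Your claim that $\beta>\frac14$ is exactly what this Hardy/interpolation step demands, and that it is sharp for the scheme, is unsupported and as far as I can see false.

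The actual role of $\beta>\frac14$ is different: in Abidi's argument (and in the paper's variant, Theorem \ref{Th:2}, where it becomes $\beta>4$ in time), the threshold governs the regularity of the \emph{small} solution $v_h$ built from $f_h\in L^2_T(H^\beta)$ --- essentially one needs $v_h\in L^\infty_T(H^{3/2})$, i.e.\ $\Gamma_h$ controlled in $L^3$, to close the coupled estimates \eqref{Gamma^l modified equation} and \eqref{Gamma^l/r equation} for $\Gamma_\ell$ and $\alpha=\Gamma_\ell/r$. The forcing entering the axisymmetric estimates is only $f_\ell$, which is spectrally supported in a ball and hence as smooth as one likes (with constants depending on $N_0$), so $(\nabla\times f_\ell)_\theta/r$ is harmless; the rough part $f_h$ never meets the axisymmetric structure at all, being absorbed perturbatively by smallness. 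This decoupling is the idea your proposal is missing. A secondary, repairable issue: $v(t_0)\in H^{3/2}$ does not give $\zeta(t_0)\in L^2$ --- by \eqref{Eq:2-eqv} that requires $v(t_0)\in H^2$ --- so your smoothing step needs the further bootstrap $v\in L^2_{loc}\big((0,T^*);H^{2+\beta}\big)$, which does hold for $\beta>\frac14$ but should be justified since the forcing limits the parabolic gain.
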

The approach suggested by Abidi to the proof of Theorem \ref{Th:1} is deeply based on argument introduced by C. P. Calder\'on \cite{Calderon} to show that the Navier-Stokes equations admit a solution in $L^p$ for $N=3, 4$ et $2<p<N$ and further performed by I. Gallagher and F. Planchon \cite{Gallagher-Planchon} with the purpose to study the global existence for the wave equations. This argument consists to split the data $v_0$ into two regular parts, the first one is a more regular, denoted by $v_{0,\ell}$ and supported spectrally in the ball of Radius $N_0$, with $N_0$ being a real number large enough, the other one is less regular but small denoted by $v_{0,h}$. Likewise, for the external $f\equiv f_{\ell}+f_{h}$, with the subscripts $h$ and $\ell$ refer respectively the high and low frequencies. More precesily, we write.
\begin{equation*}
v_{0,\ell} \triangleq \mathscr{F}^{-1}\big({\bf 1}_{|\xi|\le N_0} \widehat{v}_0\big), \quad v_{0,h} \triangleq \mathscr{F}^{-1}\big({\bf 1}_{|\xi|\ge N_0} \widehat{v }\big).
\end{equation*}
\begin{equation*}
f_{ \ell} \triangleq \mathscr{F}^{-1}\big({\bf 1}_{|\xi|\le N_0} \widehat{f}\big), \quad f_{h} \triangleq \mathscr{F}^{-1}\big({\bf 1}_{|\xi|\ge N_0} \widehat{f}\big).
\end{equation*}
Since $v_0$ and $f$ are axisymmetric, it is clear that the functions $v_{0,\ell}, v_{0,h}, f_{\ell}$ and $f_{h}$ are also because the cut-off operator doesn't disturb this structure.

\hspace{0.5cm}Compared to Proposition \ref{Pr:1} it seems that Theorem \ref{Th:1} is not optimal in terms of the required regularity on $f$, whereas the optimal regularity on $f$ would be something like $f\in L^2_T(H^{-\frac{1}{2}})$. The approach used in \cite{Abidi} isn't applicable directly if $f\in L^2_T(H^{\beta}) $, with $\beta\leq \frac{1}{4}$, instead of that, we will relax the regularity with respect to $\beta$ by means of asking for some more integrability in time. So, the general idea is as follows: for $f\in L^2_{loc}\big(\RR_+; L^2\big)$ and $\alpha\leq 0$, we have
\begin{equation*}
\|f_h\|_{L^2_T(H^{\alpha})} \le \EE_{N_0,\alpha},
\end{equation*}
with 
\begin{equation*}
\varepsilon_{N_0,\alpha} = \left\{\begin{array}{cc }
N_0^{\alpha} \|f\|_{L^2_T(L^2)} & \textrm{ if $\alpha \neq 0$}\\
\Big(\int_0^T \|{\bf 1}_{|\xi|\ge N_0}\widehat{f}(\tau,\xi)\|^2_{L^2(\RR^3)}d\tau \Big)^\frac{1}{2} & \textrm{ if $\alpha =0$.}
\end{array}
\right.
\end{equation*}

If $N_0$ is sufficiently large, then $\varepsilon_{N_0,\alpha}$ is small as much as we want, and eventually the $L^2_T(H^{\alpha})-$norm of $f_h$ as well, for all $\alpha \in \big[-\frac{1}{2}, 0\big]$. Then, for the data $f_{h}$ we define $v_h$ as the solution of the following Navier-Stokes equations
\begin{equation}\label{NS-h}
\left\{\begin{array}{l}
 \partial_t v_{h} + v_{h} \cdot \nabla v_{h} - \Delta v_{h} + \nabla p_{h}=f_h\\
  \Div v_{h} =  0 \\
  v_h{_{|t=0}} = 0.\tag{NS$_h$} 
\end{array}
\right.
\end{equation}
In the spirit of Proposition \ref{Pr:1} for \eqref{NS-h}, we will prove the following result.
\begin{prop}\label{Pr:2}
There exists $N_0>0$ large enough and $\varepsilon$ small as much as we want in terms of $N_0$ for which \eqref{NS-h} has a unique global solution $v_h$  in $E_T(\dot{H}^\frac{1}{2}) \cap  E_T(\dot{H}^1)$, with 
\begin{equation*}
\|v_h\|_{E_T(\dot{H}^\frac{1}{2}) \cap  E_T(\dot{H}^1)} \lesssim \EE.
\end{equation*}
\end{prop}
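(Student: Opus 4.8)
\section*{Proof proposal for Proposition \ref{Pr:2}}

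The plan is to set up \eqref{NS-h} as a fixed point problem and solve it by contraction in
\begin{equation*}
Y_T \triangleq E_T(\dot H^{\frac12}) \cap E_T(\dot H^{1}) = L^\infty_T(\dot H^{\frac12}) \cap L^2_T(\dot H^{\frac32}) \cap L^\infty_T(\dot H^{1}) \cap L^2_T(\dot H^{2}),
\end{equation*}
endowed with the sum of the four norms. Writing \eqref{NS-h} in Duhamel form,
\begin{equation*}
v_h = \mathcal{L}(f_h) + \mathcal{B}(v_h,v_h), \qquad \mathcal{L}(g)(t) \triangleq \int_0^t e^{(t-s)\Delta}\mathbf{P}g(s)\,ds, \qquad \mathcal{B}(u,w)(t) \triangleq -\int_0^t e^{(t-s)\Delta}\mathbf{P}\,\Div(u\otimes w)(s)\,ds,
\end{equation*}
with $\mathbf{P}$ the Leray projector, it suffices to establish: (i) a linear estimate $\|\mathcal{L}(f_h)\|_{Y_T}\lesssim \|f_h\|_{L^2_T(\dot H^{-1/2})} + \|f_h\|_{L^2_T(L^2)}$, whose right-hand side is, by the discussion preceding the statement, $\le\varepsilon$ with $\varepsilon$ as small as we please once $N_0$ is chosen large enough (for a given $T$; $N_0$ is allowed to depend on $T$); and (ii) a bilinear estimate $\|\mathcal{B}(u,w)\|_{Y_T}\le C\|u\|_{Y_T}\|w\|_{Y_T}$ with $C$ \emph{independent of} $T$.

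For (i), the smoothing properties of the heat semigroup give, for every $\sigma\in\RR$, $\|\mathcal{L}(g)\|_{L^\infty_T(\dot H^{\sigma})} + \|\mathcal{L}(g)\|_{L^2_T(\dot H^{\sigma+1})}\lesssim\|g\|_{L^2_T(\dot H^{\sigma-1})}$; applying this with $\sigma=\frac12$ and $\sigma=1$ and using that $\mathbf{P}$ is bounded on each homogeneous Sobolev space yields the claim. For (ii), using the same heat estimate and that $\mathbf{P}\,\Div$ maps $\dot H^{\sigma}$ into $\dot H^{\sigma-1}$, the problem reduces to the two product estimates $\|u\otimes w\|_{L^2_T(\dot H^{1/2})}\lesssim\|u\|_{Y_T}\|w\|_{Y_T}$ and $\|u\otimes w\|_{L^2_T(\dot H^{1})}\lesssim\|u\|_{Y_T}\|w\|_{Y_T}$. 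The first is essentially the classical Fujita--Kato bilinear estimate behind Proposition \ref{Pr:1}: the product law $\dot H^{1}(\RR^3)\cdot\dot H^{1}(\RR^3)\hookrightarrow\dot H^{1/2}(\RR^3)$ gives $\|u\otimes w\|_{\dot H^{1/2}}\lesssim\|u\|_{\dot H^{1}}\|w\|_{\dot H^{1}}$, and then Hölder in time together with the interpolation $\|u\|_{L^4_T(\dot H^{1})}\lesssim\|u\|_{L^\infty_T(\dot H^{1/2})}^{1/2}\|u\|_{L^2_T(\dot H^{3/2})}^{1/2}$ closes it with a $T$-independent constant.

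The remaining, and genuinely delicate, piece is the $\dot H^{1}$ product estimate. Writing $\Div(u\otimes w)$ as a combination of terms of the form $u\cdot\nabla w$ and using Hölder together with the Sobolev embeddings $\dot H^{1}(\RR^3)\hookrightarrow L^6$ and $\dot H^{1/2}(\RR^3)\hookrightarrow L^3$,
\begin{equation*}
\|u\otimes w\|_{\dot H^{1}}\lesssim\|u\|_{L^6}\|\nabla w\|_{L^3}+\|w\|_{L^6}\|\nabla u\|_{L^3}\lesssim\|u\|_{\dot H^{1}}\|w\|_{\dot H^{3/2}}+\|w\|_{\dot H^{1}}\|u\|_{\dot H^{3/2}},
\end{equation*}
whence, taking the $L^2$ norm in time,
\begin{equation*}
\|u\otimes w\|_{L^2_T(\dot H^{1})}\lesssim\|u\|_{L^\infty_T(\dot H^{1})}\|w\|_{L^2_T(\dot H^{3/2})}+\|w\|_{L^\infty_T(\dot H^{1})}\|u\|_{L^2_T(\dot H^{3/2})},
\end{equation*}
and each factor is controlled by $\|u\|_{Y_T}$ or $\|w\|_{Y_T}$. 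The crucial observation here — and the reason for working in the double space $E_T(\dot H^{1/2})\cap E_T(\dot H^{1})$ — is that the factor $\|w\|_{L^2_T(\dot H^{3/2})}$ is taken \emph{straight from the dissipation part of} $E_T(\dot H^{1/2})$, so no power of $T$ is generated; producing it instead by interpolating in time between $L^\infty_T(\dot H^{1})$ and $L^2_T(\dot H^{2})$ would have cost a factor $T^{1/4}$ and broken the $T$-uniformity.

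With (i) and (ii) in hand, fix $\varepsilon>0$ so small that $4C\varepsilon<1$, choose $N_0$ large enough that $\|\mathcal{L}(f_h)\|_{Y_T}\le\varepsilon$, and note that $v\mapsto\mathcal{L}(f_h)+\mathcal{B}(v,v)$ maps the closed ball of radius $2\varepsilon$ in $Y_T$ into itself and is a contraction there; hence it admits a unique fixed point $v_h$, with $\|v_h\|_{Y_T}\le 2\varepsilon\lesssim\varepsilon$. Uniqueness in all of $Y_T$ follows by applying the bilinear bound to the difference of two solutions on a short subinterval and then continuing. Since $C$ is independent of $T$ and $T>0$ was arbitrary, $v_h$ is global. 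The main obstacle, as indicated, is estimate (ii) at the top regularity level $\dot H^{1}$: one has to distribute the derivative in $\Div(u\otimes w)$ so that one factor remains in $L^\infty_T(\dot H^{1})$ while the other is absorbed by the critical dissipation $L^2_T(\dot H^{3/2})$, since otherwise the bilinear constant would depend on $T$ and the fixed point could not be carried out uniformly up to every time.
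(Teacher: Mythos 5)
Your argument is correct, but it follows a genuinely different route from the paper. The paper proves Proposition \ref{Pr:2} by energy estimates: it takes the $\dot H^{1/2}$ (then $\dot H^{1}$) inner product of \eqref{NS-h} with $v_h$, invokes the trilinear bound \eqref{Eq:7} of Proposition \ref{Pr:3} to absorb the convection term via the factor $\|v_h\|_{\dot H^{1/2}}$, and closes with a continuity/bootstrap argument on $\widetilde T=\sup\{\tau:\|v_h(\tau)\|_{\dot H^{1/2}}<1/4C\}$, showing $\widetilde T=\infty$ once $N_0$ is large; the $\dot H^{1}$ level is then handled by reusing the already-established smallness of $\|v_h\|_{L^\infty(\dot H^{1/2})}$. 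You instead run a Banach fixed point on the Duhamel formulation in $Y_T=E_T(\dot H^{1/2})\cap E_T(\dot H^{1})$, with the standard heat estimate $\|\mathcal{L}(g)\|_{L^\infty_T(\dot H^{\sigma})\cap L^2_T(\dot H^{\sigma+1})}\lesssim\|g\|_{L^2_T(\dot H^{\sigma-1})}$ and a $T$-independent bilinear bound; your key point, that the $\dot H^{1}$-level product must be closed by pairing $L^\infty_T(\dot H^{1})$ with the critical dissipation $L^2_T(\dot H^{3/2})$ coming from $E_T(\dot H^{1/2})$ rather than by interpolating in time, is exactly what keeps the constant uniform in $T$, and the product laws you use ($\dot H^{1}\cdot\dot H^{1}\hookrightarrow\dot H^{1/2}$ and the Hölder/Sobolev chain at the $\dot H^{1}$ level) are valid in $\RR^3$. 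Both proofs need $N_0$ (hence the smallness $\varepsilon$) to depend on $T$ because $f$ is only locally square-integrable in time, and both wave at uniqueness in the same way. What each buys: the paper's energy method is more elementary and produces the differential inequalities it recycles in the proof of Theorem \ref{thm.NS-l with regular initial data}, while your contraction argument packages existence, the quantitative bound and uniqueness in the small ball in one stroke, and is in fact closer in spirit to the paper's own proof of Proposition \ref{Pro:2}, which also proceeds via Duhamel and a fixed point in $\mathcal{X}_T$.
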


Next, we demonstrate that \eqref{NS} admits a unique global solution $v$ of the form $v=v_\ell+ v_h$ with $v_{\ell}$ satisfying the following modified system
  \begin{equation}\label{NS-l}
\left\{\begin{array}{l}
 \partial_t v_\ell + v_\ell \cdot \nabla v_\ell - \Delta v_\ell + \nabla p_\ell=f_\ell + F_{v_h}(v_\ell)\\
 \Div v_\ell =  0 \\
 v_{\ell_|t=0} =v_0,\tag{NS$_\ell$}
\end{array}\right.
  \end{equation}
where the operator $ F_{a} $ is given by 
\begin{equation}\label{the operator F_{u^h}}
F_{a}(b)  \triangleq -  a \cdot \nabla b  - b \cdot \nabla a  
\end{equation}
Unless $F_{v_h}(v_\ell)=0$, it is not clear how to solve \eqref{NS-l} by mean of the previous Proposition with an external force term as a sum of $f_\ell$ in $L^2(H^1)$ and a linear operator $F_{v_h}$. In fact, Proposition \ref{Pr:1} is not enough to establish the desired global control in time of the $H^1-$norm of the modified system \eqref{NS-l}, the issue is related to the fact that having a $L^\infty_T(H^\frac{3}{2})$ estimate of the small solution ($v_h$ in our context) is crucial to deal with the modified system. To remedy this, we will have to demand more regularity on $v_h$ which comes from additional some conditions on the external force $f$. We recall that in this work we will later treat the Boussinesq equations as well, so we want to keep the regularity zero with respect to the spacial variable, whereas we will add some integrability conditions with respect to time. Hence, we shall prove the following Proposition.

\begin{prop}\label{Pro:2}
Let $v_h$ be the solution obtained by Proposition \ref{Pr:1}, if in addition $f$ is in $L^\beta_T(L^2)$ for some $\beta>4$. Then $v_h$ is actually in $L^\infty_T(\dot{H}^\frac{3}{2})$, with
\begin{equation*}
\|v_h\|_{ L^\infty_T(\dot{H}^\frac{3}{2})} \lesssim \EE.
\end{equation*}
\end{prop}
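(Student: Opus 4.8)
The plan is to separate the contribution of the force from that of the nonlinearity by splitting $v_h=V_h+w$, where
\[
V_h(t):=\int_0^t e^{(t-s)\Delta}f_h(s)\,ds
\]
solves the forced heat equation $\partial_tV_h-\Delta V_h=f_h$, $V_h|_{t=0}=0$, and $w:=v_h-V_h$ solves
\begin{equation*}
\partial_t w-\Delta w+\nabla q=-\mathbf{P}\,\Div(v_h\otimes v_h),\qquad w|_{t=0}=0 .
\end{equation*}
Write $\Lambda^{s}=(-\Delta)^{s/2}$. We may freely use the bounds $\|v_h\|_{E_T(\dot H^{1/2})\cap E_T(\dot H^1)}\lesssim\EE$ from Proposition \ref{Pr:2}, and the fact that, since $f_h=\mathscr F^{-1}(\mathbf 1_{|\xi|\ge N_0}\widehat f)$ and $f\in L^\beta_T(L^2)\subset L^2_T(L^2)$ (as $\beta>4>2$), the quantities $\|f_h\|_{L^2_T(L^2)}$ and $\|f_h\|_{L^\beta_T(L^2)}$ tend to $0$ as $N_0\to\infty$ by dominated convergence, hence are $\lesssim\EE$ after enlarging $N_0$.

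For $V_h$ I need two estimates. First, the plain energy estimate for the heat equation with source $f_h\in L^2_T(L^2)$ gives $V_h\in E_T(\dot H^1)=L^\infty_T(\dot H^1)\cap L^2_T(\dot H^2)$ with norm $\lesssim\|f_h\|_{L^2_T(L^2)}\lesssim\EE$; since $\dot H^1\cap\dot H^2\hookrightarrow L^\infty(\RR^3)$ this yields $\|V_h\|_{L^2_T(L^\infty)}\lesssim\EE$. Second — and this is where $\beta>4$ is used — I estimate the $\dot H^{3/2}$ norm directly from the Duhamel formula and the smoothing bound $\|\Lambda^{3/2}e^{\tau\Delta}g\|_{L^2}\lesssim\tau^{-3/4}\|g\|_{L^2}$:
\begin{equation*}
\|V_h(t)\|_{\dot H^{3/2}}\lesssim\int_0^t(t-s)^{-3/4}\|f_h(s)\|_{L^2}\,ds\lesssim\Big(\int_0^t(t-s)^{-3\beta'/4}\,ds\Big)^{1/\beta'}\|f_h\|_{L^\beta_T(L^2)},
\end{equation*}
with $\tfrac1\beta+\tfrac1{\beta'}=1$; the time integral is finite precisely because $\tfrac{3\beta'}{4}<1\iff\beta>4$, and one gets $\|V_h\|_{L^\infty_T(\dot H^{3/2})}\lesssim T^{\frac14-\frac1\beta}\|f_h\|_{L^\beta_T(L^2)}\lesssim\EE$. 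Since $w=v_h-V_h$ with both terms in $E_T(\dot H^1)$, also $w\in E_T(\dot H^1)$ with norm $\lesssim\EE$, hence $\|w\|_{L^2_T(L^\infty)}\lesssim\EE$.

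It remains to propagate the $\dot H^{3/2}$ regularity of $w$, i.e. to control the quadratic term. Applying $\Lambda^{3/2}$ to the equation for $w$, pairing with $\Lambda^{3/2}w$ in $L^2$, using $\Div w=0$ to remove the pressure and the Leray projector, and integrating by parts the derivative from $\Div$, I obtain
\begin{equation*}
\frac12\frac{d}{dt}\|w\|_{\dot H^{3/2}}^2+\|w\|_{\dot H^{5/2}}^2\le\|v_h\otimes v_h\|_{\dot H^{3/2}}\,\|w\|_{\dot H^{5/2}} .
\end{equation*}
The standard product (Moser) estimate gives $\|v_h\otimes v_h\|_{\dot H^{3/2}}\lesssim\|v_h\|_{L^\infty}\|v_h\|_{\dot H^{3/2}}$, so absorbing the top-order factor into the dissipation,
\begin{equation*}
\frac{d}{dt}\|w\|_{\dot H^{3/2}}^2+\|w\|_{\dot H^{5/2}}^2\lesssim\|v_h\|_{L^\infty}^2\,\|v_h\|_{\dot H^{3/2}}^2 .
\end{equation*}
Inserting $v_h=V_h+w$, the right-hand side is $\lesssim\big(\|V_h\|_{L^\infty}^2+\|w\|_{L^\infty}^2\big)\big(\|V_h\|_{\dot H^{3/2}}^2+\|w\|_{\dot H^{3/2}}^2\big)$: the terms carrying $\|w\|_{\dot H^{3/2}}^2$ have coefficient $\|V_h\|_{L^\infty}^2+\|w\|_{L^\infty}^2\in L^1_T$ with $L^1_T$-norm $\lesssim\EE^2$ (previous paragraph), and the rest is $\lesssim\EE^2\big(\|V_h\|_{L^\infty}^2+\|w\|_{L^\infty}^2\big)$, of $L^1_T$-norm $\lesssim\EE^4$. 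Grönwall's lemma with $w|_{t=0}=0$ then yields $\|w\|_{L^\infty_T(\dot H^{3/2})}^2\lesssim\EE^4 e^{C\EE^2}\lesssim\EE^4$, whence $\|v_h\|_{L^\infty_T(\dot H^{3/2})}\le\|V_h\|_{L^\infty_T(\dot H^{3/2})}+\|w\|_{L^\infty_T(\dot H^{3/2})}\lesssim\EE+\EE^2\lesssim\EE$. (The energy identity is formal as written; it is justified in the usual way, e.g. on the approximate solutions of \eqref{NS-h} or via a continuation argument on $\|w\|_{L^\infty_t(\dot H^{3/2})}$.)

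The \emph{main obstacle} is exactly the quadratic term in the last display. Because $\dot H^{3/2}(\RR^3)$ is scaling-critical for \eqref{NS}, estimating $\int_0^t e^{(t-s)\Delta}\mathbf{P}\,\Div(v_h\otimes v_h)\,ds$ in $\dot H^{3/2}$ by heat smoothing alone only produces a borderline (logarithmically divergent) bound; the remedy is to keep the parabolic gain $\|w\|_{\dot H^{5/2}}^2$ and close by Grönwall with the integrable, small weight $\|v_h\|_{L^\infty}^2$, which is available only because of the $E_T(\dot H^1)$-bound already obtained in Proposition \ref{Pr:2} (through $\dot H^1\cap\dot H^2\hookrightarrow L^\infty$). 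The decomposition $v_h=V_h+w$ is what makes this work: it confines the low spatial regularity of the force to the linear piece $V_h$, handled by the elementary time-integrability estimate where $\beta>4$ enters, and leaves the nonlinear piece $w$ with zero data, on which the energy method applies cleanly.
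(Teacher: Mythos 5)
Your argument is correct, but it takes a genuinely different route from the paper's. The paper does not split off the forced linear part: it runs an absorption argument on the full Duhamel formula for $v_h$ in the Chemin--Lerner space $\widetilde{L}^\infty_T(\dot{B}^{2-\frac{2}{\beta}}_{2,\infty})\cap\widetilde{L}^{\beta}_T(\dot{B}^{2}_{2,\infty})$, estimating the bilinear term by the paraproduct law of Lemma \ref{lemma law of product1} and absorbing it thanks to the smallness of $\|v_h\|_{L^\infty_T(\dot H^{1/2})}$ from Proposition \ref{Pr:2}; there the condition $\beta>4$ enters only at the very end, through the interpolation $L^\infty_T(H^1)\cap L^\infty_T(B^{2-\frac{2}{\beta}}_{2,\infty})\hookrightarrow L^\infty_T(H^{3/2})$, which requires $2-\frac{2}{\beta}>\frac32$. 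You instead confine the force to the caloric part $V_h$, where $\beta>4$ appears as the integrability threshold $\frac{3\beta'}{4}<1$ for the heat smoothing, and you propagate $\dot H^{3/2}$ on the remainder $w$ by an energy estimate, the fractional Leibniz rule, and Gr\"onwall with the integrable weight $\|v_h\|_{L^\infty}^2$ supplied by the $E_T(\dot H^1)$ bound. Your route is more elementary (no Chemin--Lerner spaces or paraproducts) at the price of constants depending on $T$ (e.g.\ $T^{\frac14-\frac1\beta}$ from H\"older in time, and a $T^{\frac14}$ since Agmon's inequality $\|u\|_{L^\infty}\lesssim\|u\|_{\dot H^1}^{1/2}\|u\|_{\dot H^2}^{1/2}$ naturally lands in $L^4_T(L^\infty)$ rather than $L^2_T(L^\infty)$); this is harmless here because $T$ is fixed and $N_0$ may be chosen accordingly, and indeed the paper's $\varepsilon$ already depends on $T$. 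The paper's route yields the extra information $v_h\in\widetilde{L}^\infty_T(\dot{B}^{2-\frac{2}{\beta}}_{2,\infty})\cap\widetilde{L}^{\beta}_T(\dot{B}^{2}_{2,\infty})$. One small repair: since $f$, hence $f_h$, need not be divergence-free, define $V_h(t)=\int_0^t e^{(t-s)\Delta}\mathbf{P}f_h(s)\,ds$ with the Leray projector, so that $w=v_h-V_h$ is genuinely divergence-free and the pressure drops out of your $\dot H^{3/2}$ energy identity; as $\mathbf{P}$ is bounded on $L^2$, none of your estimates for $V_h$ is affected.
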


After this extensive explanation and a stack of important results, now we are ready to state our first main result which deals by establishing the following version of Theorem \ref{Th:1}.

\begin{Theo}\label{Th:2}
Let $v_0\in H^\frac{1}{2}(\RR^3)$ be an axisymmetric divergence free vector field vector without swirl and $f\in L^\beta_{loc}\big(\RR_+;L^2(\mathbb{R}^3)\big)$ be an axisymmetric vector field vector, for some $\beta >4$. Then \eqref{NS} admits a unique global axisymmetric solution be such that 
\begin{equation*}
v\in C \big(\RR_+;H^\frac{1}{2}\big) \cap L^2_{loc}\big(\RR_+;H^\frac{3}{2}\big).
 \end{equation*}
\end{Theo}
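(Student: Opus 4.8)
The plan is to build the global solution as $v=v_\ell+v_h$ following the Calder\'on--Gallagher--Planchon splitting already set up in the excerpt, and to upgrade the regularity bookkeeping so that everything closes at the level $H^{1/2}\cap H^{3/2}$. First I would fix a threshold $N_0$ large enough, depending on $v_0$ and on $\|f\|_{L^\beta_T(L^2)}$, so that the high-frequency force $f_h$ has small $L^2_T(H^\alpha)$ norm for all $\alpha\in[-\tfrac12,0]$; by Proposition \ref{Pr:2} this produces a unique global $v_h\in E_T(\dot H^{1/2})\cap E_T(\dot H^1)$ with $\|v_h\|_{E_T(\dot H^{1/2})\cap E_T(\dot H^1)}\lesssim\varepsilon$. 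Since $f\in L^\beta_T(L^2)$ with $\beta>4$, Proposition \ref{Pro:2} then promotes $v_h$ to $L^\infty_T(\dot H^{3/2})$, again with norm $\lesssim\varepsilon$. Because $v_0$ and $f$ are axisymmetric and the spectral cut-off preserves this structure, $v_h$ is axisymmetric; note $v_h$ need not be controlled in the inhomogeneous $H^{1/2}$ norm a priori, but $v_h(0)=0$ together with the Duhamel formula and $f_h\in L^2_T(L^2)\subset L^2_T(H^{-1})$ gives $v_h\in E_T(L^2)$ as well, hence $v_h\in L^\infty_T(H^{1/2})\cap L^2_T(H^{3/2})$.

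Next I would solve the modified system \eqref{NS-l} for $v_\ell$ with the low-frequency data $v_{0,\ell}$, which lies in every Sobolev space (it is spectrally supported in $|\xi|\le N_0$), and forcing $f_\ell+F_{v_h}(v_\ell)$, where $F_{v_h}(v_\ell)=-v_h\cdot\nabla v_\ell-v_\ell\cdot\nabla v_h$. The heart of the matter is a global-in-time energy estimate for $v_\ell$ in $E_T(H^1)$. Since $v_0$ is only in $H^{1/2}$, $v_{0,\ell}\in H^1$ and one first runs the $L^2$ and then the $H^1$ energy estimates: the genuine nonlinearity $v_\ell\cdot\nabla v_\ell$ is handled exactly as in the axisymmetric Navier--Stokes argument of \cite{Abidi}, exploiting the $L^p$ bound \eqref{zeta-r} on $\zeta=\omega_\theta/r$, which is what rules out finite-time blow-up under the swirl-free constraint. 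The new terms coming from $F_{v_h}(v_\ell)$ are treated perturbatively: testing against $-\Delta v_\ell$ and using the product laws, one bounds $\langle v_h\cdot\nabla v_\ell,-\Delta v_\ell\rangle$ and $\langle v_\ell\cdot\nabla v_h,-\Delta v_\ell\rangle$ by
\begin{equation*}
C\|v_h\|_{\dot H^{3/2}}\|\nabla v_\ell\|_{L^2}\|\Delta v_\ell\|_{L^2}+ \tfrac12\|\Delta v_\ell\|_{L^2}^2\cdot(\text{small}),
\end{equation*}
so that the $L^\infty_T(\dot H^{3/2})$ smallness of $v_h$ from Proposition \ref{Pro:2} lets one absorb the dangerous terms into the dissipation and close a Gronwall inequality; this is precisely why the $L^\infty_T(\dot H^{3/2})$ control of $v_h$ was singled out as crucial. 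One obtains $v_\ell\in E_T(H^1)$ for every $T>0$, and then interpolating and revisiting the equation gives $v_\ell\in C(\RR_+;H^1)\cap L^2_{loc}(\RR_+;H^2)$, which in particular embeds into the target space.

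Then I would assemble $v=v_\ell+v_h$, check it solves \eqref{NS} with data $v_0=v_{0,\ell}+v_{0,h}=v_{0,\ell}$ (here $v_{0,h}=0$ since $v_h(0)=0$; more precisely the splitting is $v_0=v_{0,\ell}+v_{0,h}$ and $v_h$ is driven only by $f_h$, so the initial data of $v$ is exactly $v_0$), and record that $v\in C(\RR_+;H^{1/2})\cap L^2_{loc}(\RR_+;H^{3/2})$: indeed $v_h$ lies in this space by Propositions \ref{Pr:2} and \ref{Pro:2} plus the $E_T(L^2)$ bound, and $v_\ell$ lies in the better space $C(\RR_+;H^1)\cap L^2_{loc}(\RR_+;H^2)\hookrightarrow C(\RR_+;H^{1/2})\cap L^2_{loc}(\RR_+;H^{3/2})$. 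Uniqueness follows from a standard $L^2$-type stability estimate in the class $L^\infty_T(H^{1/2})\cap L^2_T(H^{3/2})$, using that this regularity is subcritical enough for the difference of two solutions to be estimated in $L^2$ via the product laws and Gronwall. The main obstacle, as flagged in the text, is the $H^1$ energy estimate for the modified system \eqref{NS-l}: one must verify that the axisymmetric no-swirl machinery of \cite{Abidi} (the $\omega_\theta/r$ estimate and the resulting control of $\int_0^T\|\nabla v_\ell\|_{L^\infty}$-type quantities) survives the addition of the coupling force $F_{v_h}(v_\ell)$, and that the $v_h$-dependent commutator terms are genuinely small rather than merely bounded — this is exactly where the extra time-integrability hypothesis $\beta>4$ is consumed, via Proposition \ref{Pro:2}.
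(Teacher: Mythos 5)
Your overall architecture is the paper's: split the force at frequency $N_0$, solve \eqref{NS-h} with the small high-frequency force via Propositions \ref{Pr:2} and \ref{Pro:2}, and treat \eqref{NS-l} as an axisymmetric Navier--Stokes problem perturbed by the small linear coupling $F_{v_h}(v_\ell)$. But there are two genuine gaps. First, your initial-data bookkeeping does not close. You take $v_h(0)=0$ and $v_\ell(0)=v_{0,\ell}$, so $v(0)=v_{0,\ell}\neq v_0$. The two ways to repair this both cost something: if you instead feed $v_{0,h}$ into the $v_h$-system, then $v_{0,h}$ is only in $H^{1/2}$ (it is supported in $|\xi|\ge N_0$ but has no upper frequency cutoff), so the crucial conclusion of Proposition \ref{Pro:2}, $v_h\in L^\infty_T(\dot H^{3/2})$, fails at $t=0$ -- that bound relies on $v_h$ starting from zero and being generated by the force alone. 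The paper's choice is the other repair: $v_\ell(0)=v_0\in H^{1/2}$ only, so one cannot run $H^1$ or $H^2$ energy estimates from time zero; instead one uses Proposition \ref{prop.local.NS_l} to get a local solution in $E_{T^\star}(\dot H^s)$, exploits $v_\ell\in L^2_T(H^{3/2})$ to find $t_0$ with $v_\ell(t_0)\in H^{3/2}$ and then $t_1$ with $v_\ell(t_1)\in H^2$, applies Theorem \ref{thm.NS-l with regular initial data} from $t_1$, and glues by uniqueness in $E_T(\dot H^{1/2})$. This smoothing-and-restart step is absent from your proposal, and without it (or the alternative data splitting, which you would then have to reconcile with Proposition \ref{Pro:2}) the theorem does not follow for data that is merely $H^{1/2}$.

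Second, the step you yourself flag as ``the main obstacle'' is exactly the one the paper spends most of its effort on, and your sketch of it invokes the wrong mechanism. The pointwise maximum principle \eqref{zeta-r} for $\zeta=\omega_\theta/r$ is \emph{not} available for the modified system: the quantity $\alpha=\Gamma_\ell/r$ satisfies \eqref{Gamma^l/r equation}, a transport-diffusion equation with nontrivial source terms $\frac{g_\ell}{r}-\frac{v_\ell^r}{r}\partial_r\Gamma_h-\frac{v_\ell^z}{r}\partial_z\Gamma_h+\frac{v_\ell^r}{r}\frac{\Gamma_h}{r}$, so no $L^p$ conservation holds. The paper instead runs coupled $L^2$ energy estimates on $\Gamma_\ell$ and on $\alpha$, integrates the $v_h$-source terms by parts to avoid derivatives landing on $\Gamma_h$, and closes the resulting system for $\Pi(T_0)$ only because $\|\Gamma_h\|_{L^\infty_{T_0}(L^3)}\lesssim\varepsilon$ (via $\dot H^{1/2}\hookrightarrow L^3$ applied to $\nabla v_h$, i.e.\ via the $L^\infty_T(\dot H^{3/2})$ bound of Proposition \ref{Pro:2}), so that the $\varepsilon\,\Pi(T_0)$ terms can be absorbed. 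Your proposed bound
\begin{equation*}
C\|v_h\|_{\dot H^{3/2}}\|\nabla v_\ell\|_{L^2}\|\Delta v_\ell\|_{L^2}
\end{equation*}
for the coupling terms tested against $-\Delta v_\ell$ addresses only the linear perturbation; it does not produce the global-in-time control of $\|\nabla v_\ell\|_{L^2}$ itself, which in 3D comes only from the $\omega_\theta/r$ structure and therefore must be re-derived for the perturbed vorticity equation \eqref{Gamma^l modified equation}. As written, your argument asserts rather than proves the global $H^1$ bound, which is precisely where the theorem's content lies.
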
 

\begin{Rema} 
Let us point out that, the condition $\beta>4$ is essentially entailed to prove that the solution $v_h$ of \eqref{NS-h} is in $L^\infty_T(\dot{H}^\frac{3}{2})$, which is really important to get the global bound of the modified system. Equivalently, any couple of real numbers $(\alpha,\beta )\in [2,\infty]\times[-\frac{1}{2},\infty[$ such that the condition $f\in L^\beta_T(H^\alpha)$ allows to get a solution $v_h$ to \eqref{NS-h} in $E_T(H^\frac{1}{2}\cap H^1) \cap L^\infty_T(\dot{H}^\frac{3}{2}) $, is admissible to reinstate the condition $f\in L^\beta_T(L^2)$ in Theorem \ref{Th:2}.
\end{Rema}

\subsection{About the Boussinesq system} Now, let us move to the Boussineq system \eqref{B-mu} where we set the viscosity $\mu=1$ to simplify the presentation, the system often obtained still denoted by \eqref{B} and given by the following coupled equations.
\begin{equation}\label{B}
\left\{ \begin{array}{ll} 
\partial_{t}v+v\cdot\nabla v-\Delta v+\nabla p=\rho \vec e_3 & \textrm{if $(t,x)\in \RR_+\times\RR^3$,}\\
\partial_{t}\rho+v\cdot\nabla \rho=0 & \textrm{if $(t,x)\in \RR_+\times\RR^3$,}\\ 
\Div v=0, &\\ 
({v},{\rho})_{| t=0}=({v}_0,{\rho}_0).  
\end{array} \right.\tag{B}
\end{equation}
Let us recall that the system \eqref{B} respects the following scaling invariance
\begin{equation*}
 v_{\lambda} : (t,x)\mapsto \lambda v(\lambda^{2}t,\lambda x),\quad \rho_{\lambda} : (t,x)\mapsto\lambda^{3}\rho(\lambda^{2}t,\lambda x)
\end{equation*}
with initial data $(v_{0,\lambda},\rho_{0,\lambda})$ given by $v_{0,\lambda}(x)=\lambda v_{0}(\lambda x)$ and $\rho_{0,\lambda}(x)=\lambda^{3} v_{0}(\lambda x)$. In other words, the critical spaces for velocity are the same as for the Navier-Stokes system, and it is necessary to require two less derivatives on the density. A straightforward computation claims that $\dot H^{\frac12}(\RR^3)$ and $\dot{B}^{0}_{3,1}(\RR^3)$ are critical for the velocity, whereas $L^1(\RR^3)$ is critical but for the density.
 
\hspace{0.5cm}The second main result of this work handling with global well-posedness for \eqref{B} and  reads as follows.
\begin{Theo}\label{Th:3}
Let $v_0\in H^\frac{1}{2}(\RR^3)\cap \dot{B}^{0}_{3,1}(\RR^3)$  be an axisymmetric divergence free vector field vector without swirl and $\rho_0\in L^2(\RR^3) \cap \dot{B}^{0}_{3,1}(\RR^3)$ be a scalar axisymmetric function. Then \eqref{B} has a unique global solution satisfying for all $T>0$
\begin{equation*}
(v,\rho)\in E_T(H^\frac{1}{2}) \cap L^\infty_T(\dot{B}^{0}_{3,1}) \cap L^1_T(\dot{B}^{2}_{3,1}) \times  L^\infty_T(L^2 \cap\dot{B}^{0}_{3,1}). 
\end{equation*}  
\end{Theo}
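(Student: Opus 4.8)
\emph{Overall strategy.} I would treat the buoyancy term $\rho\,\vec e_3$ as an external force for the Navier--Stokes system and invoke Theorem~\ref{Th:2}, exploiting that the $L^2$-norm of the density is conserved along the flow. The plan is: (i) construct a unique local solution $(v,\rho)$ in the target class; (ii) globalise it by proving that the controlling norms cannot blow up in finite time, the crucial point being a global-in-time Lipschitz bound on $v$; (iii) prove uniqueness.

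\emph{Local theory and the global energy bound.} A fixed point (or Friedrichs) scheme in
\[
\mathcal X_T=\Bigl[L^\infty_T\bigl(H^{1/2}\cap\dot B^0_{3,1}\bigr)\cap L^2_T(\dot H^{3/2})\cap L^1_T(\dot B^2_{3,1})\Bigr]\times L^\infty_T\bigl(L^2\cap\dot B^0_{3,1}\bigr),
\]
relying on the usual Besov product laws for the convective terms and the transport estimate for $\rho$ (the velocity being Lipschitz on a short interval since $\|e^{t\Delta}v_0\|_{L^1_T(\dot B^2_{3,1})}\to 0$ as $T\to 0$), yields a unique local solution on a maximal interval $[0,T^*)$; it is axisymmetric because the equations and the dyadic cut-offs preserve this structure. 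On $[0,T^*)$ the density equation and $\Div v=0$ give $\|\rho(t)\|_{L^2}=\|\rho_0\|_{L^2}$, so $\rho\,\vec e_3$ is an axisymmetric vector field in $L^\infty_{loc}\bigl([0,T^*);L^2\bigr)\subset L^\beta_{loc}\bigl([0,T^*);L^2\bigr)$ for every $\beta>4$, with $\|\rho\,\vec e_3\|_{L^\beta_T(L^2)}\le T^{1/\beta}\|\rho_0\|_{L^2}$. Feeding this into the a priori estimates behind Theorem~\ref{Th:2} — the only extra point being that the smallness of the high frequencies $\|\mathbf 1_{|\xi|\ge N_0}\widehat\rho\|_{L^2_T(L^2)}$ needed in Propositions~\ref{Pr:2} and~\ref{Pro:2} holds uniformly on $[0,T]$ by dominated convergence — gives, for every $T<T^*$, a bound for $\|v\|_{L^\infty_T(H^{1/2})}+\|v\|_{L^2_T(H^{3/2})}$ depending only on $T$, $\|v_0\|_{H^{1/2}}$ and $\|\rho_0\|_{L^2}$, which therefore does not degenerate as $T\uparrow T^*$.

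\emph{Propagation of the critical regularity and globalisation.} For the density, the transport estimate in the admissible range for $\dot B^0_{3,1}$ yields $\|\rho\|_{L^\infty_T(\dot B^0_{3,1})}\le\|\rho_0\|_{\dot B^0_{3,1}}\exp\bigl(C\|\nabla v\|_{L^1_T(L^\infty)}\bigr)$. For the velocity, $L^1_T$-maximal regularity of the heat semigroup applied to $\partial_t v-\Delta v=-{\bf P}(v\cdot\nabla v)+{\bf P}(\rho\,\vec e_3)$ gives
\[
\|v\|_{L^\infty_T(\dot B^0_{3,1})}+\|v\|_{L^1_T(\dot B^2_{3,1})}\lesssim\|v_0\|_{\dot B^0_{3,1}}+\|v\cdot\nabla v\|_{L^1_T(\dot B^0_{3,1})}+T\,\|\rho\|_{L^\infty_T(\dot B^0_{3,1})}.
\]
Here $\dot B^1_{3,1}(\RR^3)$ is an algebra and $\|v\|_{L^2_T(\dot B^1_{3,1})}\lesssim\|v\|_{L^\infty_T(\dot B^0_{3,1})}^{1/2}\|v\|_{L^1_T(\dot B^2_{3,1})}^{1/2}$, so $\|v\cdot\nabla v\|_{L^1_T(\dot B^0_{3,1})}\lesssim\|v\|_{L^\infty_T(\dot B^0_{3,1})}\|v\|_{L^1_T(\dot B^2_{3,1})}$, while $\|\nabla v\|_{L^1_T(L^\infty)}\lesssim\|v\|_{L^1_T(\dot B^2_{3,1})}$. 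On a short interval the quadratic contribution and $T\,\|\rho\|_{L^\infty_T(\dot B^0_{3,1})}$ are absorbed because $\|e^{t\Delta}v_0\|_{L^1_T(\dot B^2_{3,1})}$, $\|v\|_{L^2_T(\dot H^{3/2})}$ and $T$ are then small. To reach all times one needs a finite-time Lipschitz bound on $v$; this I would obtain, as in \cite{Hmidi-Rousset,Abidi-Hmidi-Keraani}, from the good sign of the dissipation in the equation \eqref{Pi-Boussinesq} for $\zeta=\omega_\theta/r$, which permits controlling $\zeta$ in a Lorentz-type space by a bound \emph{linear} (hence globally closable) in the data, the singular buoyancy $\partial_r\rho/r$ being dealt with by a coupled-function device as in \cite{Hmidi-Rousset}, and then recovering $\nabla v\in L^1_{loc}(\RR_+;L^\infty)$ through the axisymmetric Biot--Savart law together with the previous energy bounds. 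A bootstrap on the two displays above then shows that every $\mathcal X_T$-norm stays finite, i.e.\ $T^*=+\infty$, which gives the stated conclusion.

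\emph{Main obstacle and uniqueness.} The essential difficulty is exactly this globalisation: $\dot B^0_{3,1}$ is scaling-invariant, so the convection estimate is quadratic with no global smallness, and the density estimate re-enters the velocity estimate through $\|\nabla v\|_{L^1_T(L^\infty)}$; extracting a global Lipschitz bound under the sole low regularity $v_0\in H^{1/2}$ and in presence of the buoyancy term is the technical heart of the proof, and is where the axisymmetric structure and the coupled-function idea of \cite{Hmidi-Rousset} are indispensable, whereas the hypothesis $\rho_0\in\dot B^0_{3,1}$ (and not merely $\rho_0\in L^2$) is what keeps the forcing affine in the propagated norms. Uniqueness, finally, follows from a routine stability estimate: the difference of two solutions is controlled for the velocities in $L^\infty_T(L^2)\cap L^2_T(\dot H^1)$ and for the densities in $L^\infty_T(\dot H^{-1})$ (to accommodate $\nabla\rho$ being only a distribution), and one concludes by Grönwall using the Lipschitz bound on the velocity.
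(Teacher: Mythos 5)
Your first two steps match the paper: one takes the Danchin--Paicu global weak solution, observes that $\|\rho(t)\|_{L^2}=\|\rho_0\|_{L^2}$ puts $\rho\,\vec e_3$ in $L^\beta_{loc}(L^2)$ for $\beta>4$, and invokes Theorem~\ref{Th:2} with $f=\rho\,\vec e_3$ to get the unique global axisymmetric velocity in $E_T(H^{1/2})$. The gap is in your globalisation of the $\dot B^0_{3,1}$ regularity. You correctly identify that the quadratic estimate $\|v\cdot\nabla v\|_{L^1_T(\dot B^0_{3,1})}\lesssim\|v\|_{L^\infty_T(\dot B^0_{3,1})}\|v\|_{L^1_T(\dot B^2_{3,1})}$ only closes for short times, and you propose to break the circularity by first extracting $\nabla v\in L^1_{loc}(L^\infty)$ from the $\zeta=\omega_\theta/r$ equation and the coupled-function device of \cite{Hmidi-Rousset}. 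But that device requires $\omega_0/r$ (or the coupled quantity built from it) to lie in $L^2$ or a Lorentz space at time zero, which needs roughly $v_0\in H^1$; it is simply not defined for $v_0\in H^{1/2}\cap\dot B^0_{3,1}$. The paper explicitly points out that with only $v_0\in H^{1/2}$ the maximal gain is $L^2_T(H^{3/2})=L^2_T(B^{3/2}_{2,2})$, which sits below the $L^2_T(\dot B^{3/2}_{2,1})$ threshold on which the Hmidi--Rousset bilinear estimate \eqref{eq.Hmidi} rests, so that whole route is unavailable here. (The axisymmetric vorticity analysis does appear in the paper, but only inside the proof of Theorem~\ref{Th:2}, applied to the mollified low-frequency part $v_\ell$ whose data \emph{is} in $H^2$ after the Calder\'on-type splitting --- and even there it yields $H^1$ control, not a Lipschitz bound.)

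The paper's actual resolution, which your proposal is missing, makes the Lipschitz bound an \emph{output} rather than an input. Two ingredients: (i) the Vishik/Hmidi--Keraani transport estimate of Proposition~\ref{prop1}, which is \emph{linear} in $\int_0^t\|\nabla v\|_{\dot B^0_{\infty,1}}\,d\tau\lesssim\int_0^t\mathcal V$, not exponential as in your $\exp(C\|\nabla v\|_{L^1_T(L^\infty)})$ --- this keeps the forcing term affine in $\mathcal V(t)=\|v\|_{L^1_t(\dot B^2_{3,1})}$; (ii) the bilinear estimate $\|Q(v,v)\|_{L^1_t(\dot B^0_{3,1})}\lesssim\|v\|_{L^4_t(\dot H^1)}\,\|v\|_{L^{4/3}_t(\dot B^{3/2}_{3,4/3})}$, where the $L^{4/3}_t(\dot B^{3/2}_{3,4/3})$ norm is itself controlled via Duhamel by $\|v_0\|_{\dot B^0_{3,1}}$, the energy norm $E_t(H^{1/2})$ (already global), and $t^{3/4}\|\rho_0\|_{\dot B^0_{3,1}}\bigl(1+\int_0^t\mathcal V\bigr)$. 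Substituting gives $\mathcal V(t)\lesssim\mathcal A(t)+\mathcal B(t)\int_0^t\mathcal V(\tau)\,d\tau$ with $\mathcal A,\mathcal B$ finite for all $t$, and Gr\"onwall closes the argument; $v\in L^1_T(\dot B^2_{3,1})\hookrightarrow L^1_T(\mathrm{Lip})$ then follows, and uniqueness is the Danchin--Paicu stability estimate as you say. Without ingredient (ii) --- or some substitute that bounds the bilinear term using only norms already controlled by $E_T(H^{1/2})$ --- your bootstrap does not close, so as written the proof is incomplete at its technical heart.
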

A bunch of important remarks concerning the previous Theorem are in order.
\begin{Rema} 
An axisymmetric scalar function, meaning a function that depends only on the variable $ (r, z) $ but not on the angle variable $\theta$ in cylindrical coordinates. We check obviously that the axisymmetric structure is preserved through the time in the way that if $(v_0,\rho_0)$ is axisymmetric without swirl, then the obtained solution is it also.
\end{Rema}
\begin{Rema}As aforementioned above $H^{\frac12}(\RR^3)\subset\dot{H}^{\frac12}(\RR^3)$ and $\dot{B}^{0}_{3,1}(\RR^3)$ are critical spaces with respect to the velocity, whereas $L^2(\RR^3)$ and $\dot{B}^0_{3,1}(\RR^3)$ are not critical ones for the density, as pointed out, for instance, in \cite{Danchin-Paicu1} it is not clear how to solve the equations in critical spaces with respect to the density without additional required regularity on the initial data, our choice on this latest is highly inspired from \cite{Danchin-Paicu1,Hmidi-Rousset} whereas the $L^2$ condition on $\rho_0$ seems to be crucial to solve the momentum equation in $H^\frac{1}{2}$, and the condition $\dot{B}^0_{3,1}$ is essentially helpful for the uniqueness.
\end{Rema}
\hspace{0.5cm}Let us briefly discussing the proof of Theorem \ref{Th:3}, first by describing the formal energy inequalities associated to our system. To do this, let $(v,\rho,\nabla p)$ be a regular solution for Boussinesq system which decreasing at infinity with initial data $(v_0,\rho_0)$. First of all, we recall that the velocity vector field is in divergence free, so   
\begin{equation*}
\|\rho(t)\|_{L^p}=\|\rho_0\|_{L^p},\quad p\in[1,\infty],\; t\ge0. 
\end{equation*}
In particular, for all $r\in [1,\infty]$ we have
\begin{equation}\label{Eq:5}
\|\rho\|_{L^r_T(L^2)}=T^\frac{1}{r}\|\rho_0\|_{L^2}.
\end{equation} 
and thus
\begin{equation*}
\|v\|_{L^\infty_T(L^2)} + \|\nabla v\|_{L^2_T(L^2)} \lesssim \|v_0\|_{L^2} +T^{\frac12}\|\rho_0\|_{ L^2  }.
\end{equation*}
By virtue of Theorem 1.1 from \cite{Danchin-Paicu1}, the assumption $v_0,\rho_0$ in $L^2(\RR^3)$ ensures the existence of at least one global weak solution $(v,\rho)$, that is, in view of \eqref{Eq:5}, $\rho$ is in $L^\beta_T(L^2)$, for $\beta>4$. Consequently, in accordance with Theorem \ref{Th:2} we may define the unique global axisymmetric solution $v$ to the equation \eqref{NS} with $f=\rho \vec e_3$. Next step consists to propagate the regularity alike $v\in L^1_ {t} (Lip) $ which seems to be crucial, according to \cite{Danchin-Paicu1}, to prove the uniqueness of the constructed solution for \eqref{B}. But the lack of diffusion term in the density equation contributes some technical drawback. To circumvent these difficulties we will need to supply some additional summability condition at the level of the dyadic decomposition of the $(v_0,\rho_0)$, rather than being just in $ B^\frac{1}{2}_{2,2}\times B^0_ {2,2} $, namely we will need them to be in $ B^0_ {3,1} \times B^0_ {3,1} $. Theorem \ref{Th:3} is then a consequence of the following Proposition.

\begin{prop} Let $T>0$ and $(v,\rho )$ be a solution the of \eqref{B} on $(0,T)$ in $E_T(H^\frac{1}{2}) \times L^\infty_T(L^2)$. If in addition $(v_0,\rho_0) \in B^0_{3,1} \times B^0_{3,1}$, then 
\begin{equation*}
(v,\rho)\in L^\infty_T(B^0_{3,1}) \cap L^1_T(B^0_{3,1}) \times  L^\infty_T(B^0_{3,1})
\end{equation*}  
and $(v,\rho)$ is actually the unique solution of \eqref{B} on $(0,T)$.
\end{prop}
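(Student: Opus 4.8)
The plan is to upgrade a solution $(v,\rho)$ that is only known to live in $E_T(H^{1/2}) \times L^\infty_T(L^2)$ to one that propagates the subcritical Besov regularity $B^0_{3,1}$, and then to exploit this extra regularity to obtain uniqueness. First I would fix the transport structure: since $\Div v = 0$ and the density satisfies $\partial_t \rho + v\cdot\nabla\rho = 0$, I would apply the standard Besov estimate for the transport equation (commutator/logarithmic estimate à la Bahouri--Chemin--Danchin) to get
\begin{equation*}
\|\rho(t)\|_{B^0_{3,1}} \lesssim \|\rho_0\|_{B^0_{3,1}} \exp\Big( C\int_0^t \|\nabla v(\tau)\|_{L^\infty}\, d\tau \Big),
\end{equation*}
so that everything reduces to controlling $\|\nabla v\|_{L^1_T(L^\infty)}$, equivalently $v \in L^1_T(\mathrm{Lip})$. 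For $v$ itself I would write the momentum equation as a forced heat equation, $\partial_t v - \Delta v = -\mathbf{P}\Div(v\otimes v) + \mathbf{P}(\rho\vec e_3)$, apply the frequency-localized (Chemin--Lerner) maximal regularity estimate for the heat semigroup in $B^0_{3,1}$, and bound the nonlinear term using the product/paraproduct law $\|v\cdot\nabla v\|_{B^0_{3,1}} \lesssim \|v\|_{L^\infty}\|\nabla v\|_{B^0_{3,1}}$ together with the embeddings available from $E_T(H^{1/2})$ and the axisymmetric structure. The target space $L^\infty_T(B^0_{3,1})\cap L^1_T(B^2_{3,1})$ is exactly the parabolic gain-of-two-derivatives space, and $B^2_{3,1}\hookrightarrow \mathrm{Lip}$, which closes the loop with the transport estimate above.

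The key issue is that the a priori information $v\in L^\infty_T(H^{1/2})\cap L^2_T(H^{3/2})$ does not by itself give $v\in L^1_T(\mathrm{Lip})$ in three dimensions — this is precisely the subcritical gap the Proposition is designed to bridge — so I would need to feed in the global bounds already established for the axisymmetric Navier--Stokes problem. Concretely, $f = \rho\vec e_3$ with $\rho\in L^\infty_T(L^2)\cap L^\infty_T(\dot B^0_{3,1})$, and by \eqref{Eq:5} $\rho\in L^\beta_T(L^2)$ for any $\beta$; hence Theorem \ref{Th:2} applies and yields the global axisymmetric solution with $v\in C(\RR_+;H^{1/2})\cap L^2_{loc}(\RR_+;H^{3/2})$, and moreover the Calderón-type splitting $v = v_\ell + v_h$ from Propositions \ref{Pr:2}, \ref{Pro:2} gives $v_h\in E_T(\dot H^{1/2}\cap \dot H^1)\cap L^\infty_T(\dot H^{3/2})$ small, while $v_\ell$ solves the smoother modified system \eqref{NS-l} and can be shown to lie in $E_T(H^1)$, hence (interpolating with the equation) in $L^1_T(H^{5/2})\hookrightarrow L^1_T(\mathrm{Lip})$ in the axisymmetric class. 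Bootstrapping through this decomposition is what converts the energy-level control into the Lipschitz control of $\nabla v$; once $\|\nabla v\|_{L^1_T(L^\infty)} < \infty$ is in hand, the transport estimate propagates $\rho\in L^\infty_T(B^0_{3,1})$ and the heat estimate propagates $v\in L^\infty_T(B^0_{3,1})\cap L^1_T(B^2_{3,1})$ by a standard continuation argument on $[0,T]$.

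For uniqueness I would take two solutions $(v_1,\rho_1)$, $(v_2,\rho_2)$ with the same data in the stated class, set $\delta v = v_1 - v_2$, $\delta\rho = \rho_1 - \rho_2$, and estimate the difference at one regularity level below, typically in $L^\infty_T(L^2)\cap L^2_T(H^1)$ for $\delta v$ and in $L^\infty_T(B^{-1}_{3,1})$ (or a negative-index Besov space) for $\delta\rho$, the loss of derivative on the density being unavoidable since the density equation has no smoothing. The difference of the momentum equations is $\partial_t\delta v - \Delta\delta v + \nabla\delta p = -\delta v\cdot\nabla v_1 - v_2\cdot\nabla\delta v + \delta\rho\,\vec e_3$; an $L^2$ energy estimate controls $\delta v$ provided $\nabla v_1\in L^1_T(L^\infty)$ (available by the first part) and provided $\|\delta\rho\,\vec e_3\|_{L^2_T(\dot H^{-1})}$ is controlled, which is where the $B^0_{3,1}$ regularity on $\rho$ and the stability estimate for the transport equation enter: $\|\delta\rho(t)\|_{B^{-1}_{3,1}} \lesssim \int_0^t \|\delta v\|_{L^2}\,\|\nabla\rho_2\|_{B^0_{3,1}}\,d\tau$ after a commutator estimate, closing a Grönwall loop in the combined quantity $\|\delta v\|_{L^\infty_T(L^2)} + \|\delta\rho\|_{L^\infty_T(B^{-1}_{3,1})}$.

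The main obstacle, and the heart of the argument, is the first step: getting $v\in L^1_T(\mathrm{Lip})$ without any regularizing effect from the density equation. The naive route — closing a $B^0_{3,1}$ estimate on $v$ directly from the momentum equation — is circular, because the product estimate for $v\cdot\nabla v$ needs an $L^1_T(L^\infty)$ bound on $\nabla v$ that one is precisely trying to prove, and the energy space in 3D is subcritical for this. The resolution is to route everything through Theorem \ref{Th:2} and the frequency-splitting Propositions \ref{Pr:2}--\ref{Pro:2}: the high-frequency part $v_h$ is small and globally smooth (up to $\dot H^{3/2}$, hence its gradient is in $L^2_T(L^\infty)\subset L^1_T(L^\infty)$ locally), while the low-frequency part $v_\ell$ solves a parabolic system with a smooth forcing and is therefore as regular as one likes for positive times in the axisymmetric class — so the Lipschitz bound is inherited from the Navier--Stokes theory built in the previous section, and the present Proposition is really a propagation-of-regularity statement layered on top of it.
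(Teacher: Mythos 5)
There is a genuine gap at the heart of your argument: the step that is supposed to produce $v\in L^1_T(\mathrm{Lip})$. You propose to inherit the Lipschitz bound from the Navier--Stokes theory of the previous section, but that theory only yields $v\in C(\RR_+;H^{1/2})\cap L^2_{loc}(\RR_+;H^{3/2})$, and neither piece of the Calder\'on splitting delivers $\nabla v\in L^1_T(L^\infty)$ down to $t=0$: for the high frequencies, $v_h\in L^\infty_T(\dot H^{3/2})$ does \emph{not} give $\nabla v_h\in L^\infty$ (the critical embedding $\dot H^{3/2}(\RR^3)\hookrightarrow L^\infty$ fails, so your claim that $\nabla v_h\in L^2_T(L^\infty)$ is unjustified); for the low frequencies, $v_\ell$ reaches $E_T(H^2)$ only \emph{after} a positive time $t_1>0$, and even your target $L^1_T(H^{5/2})$ would not embed into $L^1_T(\mathrm{Lip})$ for the same critical-index reason (besides, $L^1$-in-time maximal regularity fails in $H^s=B^s_{2,2}$; one must work with third index $1$). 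The direct alternative you mention, $\|v\cdot\nabla v\|_{B^0_{3,1}}\lesssim\|v\|_{L^\infty}\|\nabla v\|_{B^0_{3,1}}$, is also unavailable: $\|v\|_{L^\infty}$ is not controlled by $E_T(H^{1/2})$, and the weaker estimate $\|Q(v,v)\|_{\widetilde L^1_t(\dot B^0_{3,1})}\lesssim\|v\|_{L^\infty_t(\dot B^{-1}_{\infty,\infty})}\|v\|_{L^1_t(\dot B^2_{3,1})}$ leads to $\mathcal V(t)\lesssim C_0+K(t)\mathcal V(t)+\int_0^t C_0\mathcal V(\tau)\,d\tau$, which cannot be closed since $K(t)\sim\|v\|_{L^\infty_t(H^{1/2})}$ is not small. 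This is exactly the obstruction the paper isolates before presenting its actual argument.

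The paper's resolution is different and does not pass through $L^1_T(\mathrm{Lip})$ a priori. One first bounds $v$ in the auxiliary space $\widetilde L^{4/3}_t(\dot B^{3/2}_{3,4/3})$ by Duhamel's formula, using only the energy norms $\|v\|_{L^4_t(\dot H^1)}$ and $\|v\|_{L^2_t(\dot H^{3/2})}$ (available from $E_T(H^{1/2})$) plus a term $t^{3/4}\|\rho_0\|_{\dot B^0_{3,1}}\bigl(1+\int_0^t\mathcal V(\tau)\,d\tau\bigr)$ coming from the density. Then the bilinear term is estimated as $\|Q(v,v)\|_{L^1_t(\dot B^0_{3,1})}\lesssim\|v\|_{L^4_t(\dot H^1)}\|v\|_{L^{4/3}_t(\dot B^{3/2}_{3,4/3})}$, which turns the fixed-point inequality into a \emph{linear} Gr\"onwall inequality $\mathcal V(t)\lesssim\mathcal A(t)+\mathcal B(t)\int_0^t\mathcal V(\tau)\,d\tau$ and closes. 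For the density, the paper uses the Vishik-type estimate of Proposition \ref{prop1}, which is linear (not exponential) in $\int_0^t\|\nabla v\|_{\dot B^0_{\infty,1}}\,d\tau$; your exponential transport estimate would only be harmless if the Lipschitz bound were already known, which is precisely what is at stake. In the paper the Lipschitz control $v\in L^1_T(\dot B^1_{\infty,1})$ is thus an \emph{output} of the $B^0_{3,1}$ propagation (via $L^1_T(\dot B^2_{3,1})\hookrightarrow L^1_T(\dot B^1_{\infty,1})$), not an input. Your uniqueness scheme, by contrast, is essentially the one the paper imports from Danchin--Paicu and is fine once the regularity is in hand.
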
 

\section{Technical tool box}During this work, we denote by $C$ a positive constant which may be different in each occurrence, but it does not depend on the initial data. We shall sometimes alternatively use the notation $X\lesssim Y$ for an inequality of type $X\le CY$ with $C$ independent of $X$ and $Y$. The notation $C_0$ means a constant depend on the involved norms of the initial data.

\hspace{0.5cm} For $s\in\RR$, the standard Sobolev spaces denoted by $H^s(\RR^3)$ is defined as the set of all tempered distributions over $\RR^3$ be such that
\begin{equation*}
\|v\|_{H^s}^{2}=\int_{\RR^3}\big(1+|\xi|^2\big)^s|\widehat{u}(\xi)|^2d\xi<\infty.
\end{equation*} 
Likewise, the homogeneous standard Sobolev spaces designated by $\dot H^s(\RR^3)$ are defined as the set of all tempered distributions over $\RR^3$ be such that $\widehat{u}\in L^1_{loc}$ and
\begin{equation*}
\|v\|_{\dot H^s}^{2}=\int_{\RR^3}|\xi|^{2s}|\widehat{u}(\xi)|^2d\xi<\infty.
\end{equation*} 
The scalar product in $H^s$ and $\dot{H}^s$ are denoted respective by $ \langle \cdot| \cdot\rangle_s $ and $\langle \cdot| \cdot\rangle_{\dot{s}}$, whereas the case $s=0$ will be simply denoted by $\langle \cdot | \cdot\rangle$.
\hspace{0.5cm}Now, we state a bref concise of Littlewood-Paley theory which motivates to define the Besov spaces, paradifferantial calculus, in particular, Bony's decompsition as well as the Bernstein's inequlaities. 

\hspace{0.5cm} Let $(\chi,\varphi)\in\mathscr{D}(\RR^3)\times \mathscr{D}(\RR^3\backslash\{0\})$ be cut-off functions, monotonically decaying along rays with values in $[0,1]$ and so that
\begin{equation*}
\supp\chi\subset \mathscr{B}(0,1),\quad\supp\varphi\subset\mathcal{A}(0,1/2,1),
\end{equation*}
with $\mathscr{B}(0,1)$ and $\mathcal{A}(0,1/2,1)$ designate respectively the unit ball and annulus with small radius $\frac12$ and big radius $1$. It easily be shown that
\begin{equation}\label{Decomp}
\forall\xi\in\RR^3,\quad \chi(\xi)+\sum_{q\ge0}\varphi(2^{-q}\xi)=1,\quad \frac{1}{2}\le\chi^2(\xi)+\sum_{q\ge0}\varphi^2(2^{-q}\xi)\le1.
\end{equation}
For every $u\in S'(\RR^3)$, define the cut-off or dyadic operators by 

\begin{equation}\label{Eq:00}
\Delta_{-1}u\triangleq\chi(\DD)u,\quad \Delta_{q}u\triangleq\varphi(2^{-q}\DD)u\quad \mbox{if}\;q\in\NN,\quad S_{q}u\triangleq\sum_{j\le q-1}\Delta_{j}u\quad\mbox{for}\; q\ge0.
\end{equation}

Besides, other nice properties $\Delta_q$ and $S_q$ are listed in the following points. Namely, for $u,v\in S'(\RR^3)$ we have
\begin{enumerate}
\item[(i)] $\vert p-q\vert\ge2\Longrightarrow\Delta_p\Delta_q u\equiv0$,
\item[(ii)] $\vert p-q\vert\ge4\Longrightarrow\Delta_q(S_{p-1}u\Delta_p v)\equiv0$,
\item[(iii)] $\Delta_q, S_q: L^p\rightarrow L^p$ uniformly with respect to  $q$ and $p$.
\item[(iv)] 
\begin{equation*}
u=\sum_{q\ge-1}\Delta_q u.
\end{equation*}
\end{enumerate}  
Likewise the homogeneous operators $\dot{\Delta}_{q}$ and $\dot{S}_{q}$ are defined by
\begin{equation}\label{Hom}
\forall{q}\in \mathbb{Z}\quad\dot{\Delta}_{q}=\varphi(2^{q}D)u, \quad \dot{S}_{q}=\sum_{ j\le q-1}\dot{\Delta}_{j}v.
\end{equation}
Now, we will recall the definition of an inhomogeneous and homogeneous Besov spaces.

\begin{defi} For $(s,p,r)\in\RR\times[1,  +\infty]^2$. The inhomogeneous Besov space $B_{p,r}^s$ (resp. the homogeneous Besov space $\dot{B}_{p,r}^s$) is the set of all tempered distributions $u\in S'$ (resp. $u\in S'_{|{\mathcal P}})$ such that
\begin{eqnarray*}
&&\Vert u\Vert_{{B}_{p, r}^{s}}\triangleq\Big(2^{qs}\Vert \Delta_{q} u\Vert_{L^{p}}\Big)_{\ell^r(\mathbb{Z})}<\infty. \\
&&\big(\mbox{resp. }\Vert u\Vert_{\dot{{B}}_{p, r}^{s}}\triangleq\ \Big(2^{qs}\Vert \dot\Delta_{q} u\Vert_{L^{p}}\Big)_{\ell^r(\mathbb{Z})}<\infty\big),
\end{eqnarray*}
where ${\mathcal P}$ stands the set of polynomials.
\end{defi}

By virtue of second estimate \eqref{Decomp}, for $s\in\RR$ that the spaces $H^s(\RR^3)$ and $\dot H^s(\RR^3)$ can be identified respectively by $B^s_{2,2}(\RR^3)$ and $\dot B^s_{2,2}(\RR^3)$ with equivalent norms. 

\hspace{0.5cm}The well-known  {\it Bony's} decomposition \cite{Bony} enables us to split formally the product of two tempered distributions $u$ and $v$ into three pieces. Especially, we have

\begin{defi} For a given $u, v\in S'$ we have
\begin{equation}\label{Eq:} 
uv=T_u v+T_v u+\mathscr{R}(u,v),
\end{equation}
with
\begin{equation*}
T_u v=\sum_{q}S_{q-1}u\Delta_q v,\quad  \mathscr{R}(u,v)=\sum_{q}\Delta_qu\widetilde\Delta_{q}v  \quad\hbox{and}\quad \widetilde\Delta_{q}=\Delta_{q-1}+\Delta_{q}+\Delta_{q+1}.
\end{equation*}
\end{defi}

The mixed space-time spaces functionals are more useful in several parts of this paper which motivates the following definition. 

\begin{defi} Let $T>0$ and $(\gamma,p,r,s)\in[1, \infty]^3\times\RR$.  We define the spaces $L^{\gamma}_{T}B_{p,r}^s$ and $\widetilde L^{\gamma}_{T}B_{p,r}^s$ respectively by: 
\begin{equation*}
L^\gamma_{T}B_{p,r}^s\triangleq\Big\{u: [0,T]\to S'; \Vert u\Vert_{L_{T}^{\gamma}B_{p, r}^{s}}=\big\Vert\big(2^{qs}\Vert \Delta_{q}u\Vert_{L^{p}}\big)_{\ell^{r}}\big\Vert_{L_{T}^{\gamma}}<\infty\Big\},
\end{equation*}
\begin{equation*}
\widetilde L^{\gamma}_{T}B_{p,r}^s\triangleq\Big\{u:[0,T]\to S'; \Vert u\Vert_{\widetilde L_{T}^{\gamma}{B}_{p, r}^{s}}=\big(2^{qs}\Vert \Delta_{q}u\Vert_{L_{T}^{\gamma}L^{p}}\big)_{\ell^{r}}<\infty\Big\}.
\end{equation*}
The relationship between these spaces is given by the following embeddings. Let $ \varepsilon>0,$ then 
\begin{equation}\label{embeddings}
\left\{\begin{array}{ll}
L^\gamma_{T}B_{p,r}^s\hookrightarrow\widetilde L^\gamma_{T}B_{p,r}^s\hookrightarrow L^\gamma_{T}B_{p,r}^{s-\varepsilon} & \textrm{if  $r\geq \beta$},\\
L^\gamma_{T}B_{p,r}^{s+\varepsilon}\hookrightarrow\widetilde L^\gamma_{T}B_{p,r}^s\hookrightarrow L^\gamma_{T}B_{p,r}^s & \textrm{if $\gamma\geq r$}.
\end{array}
\right.
\end{equation}
\end{defi}

Accordingly, we have the following interpolation result. 

\begin{cor} Let $T>0,\; s_1<s<s_2$ and $\varsigma\in(0, 1)$ such that $s=\varsigma s_1+(1-\varsigma)s_2$. Then we have
\begin{equation}\label{m1}
\Vert u\Vert_{\widetilde L_{T}^{a}{B}_{p, r}^{s}}\le C\Vert u\Vert_{\widetilde L_{T}^{a}{B}_{p, \infty}^{s_1}}^{\varsigma}\Vert u\Vert_{\widetilde L_{T}^{a}{B}_{p, \infty}^{s_2}}^{1-\varsigma}.
\end{equation}
\end{cor}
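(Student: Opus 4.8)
The plan is to reduce \eqref{m1} to an elementary logarithmic-convexity inequality for a single nonnegative sequence, since the time integration in the Chemin--Lerner norm plays no role. First I would freeze the dyadic blocks by setting $c_q \triangleq \|\Delta_q u\|_{L^a_T L^p}$, so that, directly from the definitions,
\[
\|u\|_{\widetilde L^a_T B^s_{p,r}} = \big\|\big(2^{qs}c_q\big)_q\big\|_{\ell^r}, \qquad A \triangleq \|u\|_{\widetilde L^a_T B^{s_1}_{p,\infty}} = \sup_q 2^{qs_1}c_q, \qquad B \triangleq \|u\|_{\widetilde L^a_T B^{s_2}_{p,\infty}} = \sup_q 2^{qs_2}c_q .
\]
The whole assertion then becomes the purely sequential estimate $\big\|(2^{qs}c_q)_q\big\|_{\ell^r} \le C\,A^{\varsigma} B^{1-\varsigma}$, in which $u$ and $T$ have disappeared.

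The key identity, coming from $s=\varsigma s_1+(1-\varsigma)s_2$, is that for every index $q$ one has $2^{qs}c_q = \big(2^{qs_1}c_q\big)^{\varsigma}\big(2^{qs_2}c_q\big)^{1-\varsigma}$. Taking the supremum over $q$ immediately disposes of the borderline case $r=\infty$, with constant $1$. For $r<\infty$ I would instead use the two one-sided bounds $2^{qs}c_q \le A\,2^{q(s-s_1)}$ and $2^{qs}c_q \le B\,2^{q(s-s_2)}$ and exploit the strict ordering $s_1<s<s_2$: the first is summable as $q\to-\infty$, the second as $q\to+\infty$. Concretely, I would pick the crossover index $q_0$ where the two bounds coincide, namely $2^{q_0(s_2-s_1)}=B/A$, estimate $2^{qs}c_q$ by $A\,2^{q(s-s_1)}$ for $q\le q_0$ and by $B\,2^{q(s-s_2)}$ for $q>q_0$, and sum the two resulting geometric series. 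Each converges because $s-s_1>0$ and $s-s_2<0$, and is controlled by its extremal term, proportional to $2^{q_0 r(s-s_1)}$ and $2^{q_0 r(s-s_2)}$ respectively. Substituting $2^{q_0}=(B/A)^{1/(s_2-s_1)}$ together with the identities $\tfrac{s-s_1}{s_2-s_1}=1-\varsigma$ and $\tfrac{s_2-s}{s_2-s_1}=\varsigma$ turns both contributions into a constant multiple of $A^{r\varsigma}B^{r(1-\varsigma)}$; taking the $r$-th root gives \eqref{m1} with $C=C(r,s-s_1,s_2-s)$, independent of $u$ and $T$.

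There is no genuine obstacle here: the estimate is nothing but the standard convexity of Besov norms in the regularity index, transported verbatim to the $\widetilde L^a_T$ scale because the $\ell^r$ summation acts only on the frozen sequence $(c_q)$. The single point requiring a little care is the summation for $r<\infty$: one must use the strictness $s_1<s<s_2$ to guarantee convergence of both geometric tails, and check that balancing at $q_0$ recombines the prefactors $A$ and $B$ with exactly the exponents $\varsigma$ and $1-\varsigma$. If one wishes to avoid a non-integer $q_0$, splitting the sum at $\lfloor (s_2-s_1)^{-1}\log_2(B/A)\rfloor$ yields the same conclusion at the cost of a harmless change in the constant.
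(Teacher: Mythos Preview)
Your argument is correct and is precisely the standard proof of this interpolation inequality: freeze $c_q=\|\Delta_q u\|_{L^a_T L^p}$, write $2^{qs}c_q=(2^{qs_1}c_q)^{\varsigma}(2^{qs_2}c_q)^{1-\varsigma}$, and for $r<\infty$ split the $\ell^r$ sum at the balancing index $q_0$ to get two geometric series whose extremal terms recombine into $A^{r\varsigma}B^{r(1-\varsigma)}$. The only cosmetic point is that the paper's $\widetilde L^a_T B^s_{p,r}$ norm is built on the inhomogeneous blocks $(\Delta_q)_{q\ge -1}$, so the sum actually starts at $q=-1$ rather than ranging over all of $\ZZ$; your phrase ``summable as $q\to-\infty$'' is therefore not quite the right picture, but the estimate for $q\le q_0$ only becomes easier (finitely many terms, or a geometric tail bounded by its last term), and nothing in the computation changes.

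As for comparison with the paper: the corollary is stated there without proof, as an immediate consequence of the definition of the Chemin--Lerner spaces together with the classical convexity of Besov norms in the regularity index. Your write-up supplies exactly that missing standard argument, so there is no methodological difference to discuss.
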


The following {\it Bernstein} inequalities describe a bound on the derivatives of a function in the $L^b-$norm in terms of the value of the function in the $L^a-$norm, under the assumption that the Fourier transform of the function is compactly supported. For more details we refer \cite{Bahouri-Chemin-Danchin,Chemin}.

\begin{lem}\label{Lem:1} There exists a constant $C>0$ such that for $1\le a\le b\le\infty$, for every function $u$ and every $q\in\NN\cup\{-1\}$, we have
\begin{equation}\label{Eq:000}
\sup_{\vert\alpha\vert=k}\Vert\partial^{\alpha}S_{q}u\Vert_{L^{b}}\le C^{k}2^{q\big(k+2\big(\frac{1}{a}-\frac{1}{b}\big)\big)}\Vert S_{q}u\Vert_{L^{a}}.
\end{equation}
\begin{equation}\label{Eq:0000}
C^{-k}2^{qk}\Vert\Delta_{q}u\Vert_{L^{a}}\le\sup_{\vert\alpha\vert=k}\Vert\partial^{\alpha}\Delta_{q}u\Vert_{L^{a}}\le C^{k}2^{qk}\Vert\Delta_{q}u\Vert_{L^{a}}.
\end{equation}
\end{lem}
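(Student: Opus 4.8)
The plan is to derive both estimates from the spectral localization of $S_q u$ and $\Delta_q u$ combined with Young's convolution inequality: every power of $2^q$ will emerge as the scaling factor of a rescaled convolution kernel, while every factor $C^k$ will come from the Schwartz decay of that kernel together with the number of multi-indices of length $k$.

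First I would prove \eqref{Eq:000}. Pick $\tilde\chi\in\mathscr{D}(\RR^3)$ with $\tilde\chi\equiv 1$ on a ball containing $\supp\widehat{S_q u}\subset\{|\xi|\lesssim 2^q\}$, so that $S_q u=\tilde\chi(2^{-q}\DD)S_q u$ and, for $|\alpha|=k$,
\begin{equation*}
\partial^{\alpha}S_q u=h_{q,\alpha}\ast S_q u,\qquad h_{q,\alpha}(x)=2^{q(3+k)}\big(\partial^{\alpha}\mathscr{F}^{-1}\tilde\chi\big)(2^{q}x).
\end{equation*}
Young's inequality with $1+\tfrac1b=\tfrac1c+\tfrac1a$ gives $\|\partial^{\alpha}S_q u\|_{L^b}\le\|h_{q,\alpha}\|_{L^c}\|S_q u\|_{L^a}$, and the change of variables $y=2^qx$ yields
\begin{equation*}
\|h_{q,\alpha}\|_{L^c}=2^{q\left(k+3\left(\frac1a-\frac1b\right)\right)}\big\|\partial^{\alpha}\mathscr{F}^{-1}\tilde\chi\big\|_{L^c},
\end{equation*}
the power of $2^q$ being dictated by the ambient dimension $3$. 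Since $(i\xi)^{\alpha}\tilde\chi$ is a compactly supported smooth function, $\|\partial^{\alpha}\mathscr{F}^{-1}\tilde\chi\|_{L^c}\le C^k$, which closes \eqref{Eq:000}. The upper bound in \eqref{Eq:0000} is exactly this argument in the case $a=b$ (whence $c=1$), with $\Delta_q$ in place of $S_q$ and $\tilde\chi$ replaced by a $\tilde\varphi\in\mathscr{D}(\RR^3\setminus\{0\})$ equal to $1$ on $\supp\varphi$.

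The reverse bound in \eqref{Eq:0000} is where the annular support of $\widehat{\Delta_q u}$ is essential, since it keeps $|\xi|$ bounded away from the origin and hence invertible. For $q\in\NN$ (so that $\Delta_q=\varphi(2^{-q}\DD)$), starting from $|\xi|^{2k}=\sum_{|\alpha|=k}\frac{k!}{\alpha!}\xi^{2\alpha}$ and inserting $\tilde\varphi(2^{-q}\xi)=1$ on $\supp\widehat{\Delta_q u}$, I would write
\begin{equation*}
\widehat{\Delta_q u}(\xi)=\sum_{|\alpha|=k}m_{q,\alpha}(\xi)\,\widehat{\partial^{\alpha}\Delta_q u}(\xi),\qquad m_{q,\alpha}(\xi)=2^{-qk}\,n_{\alpha}(2^{-q}\xi),
\end{equation*}
where $n_{\alpha}(\eta)=\frac{k!}{\alpha!}(-i)^{k}\,\eta^{\alpha}|\eta|^{-2k}\tilde\varphi(\eta)\in\mathscr{D}(\RR^3\setminus\{0\})$. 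Inverting the Fourier transform gives $\Delta_q u=\sum_{|\alpha|=k}K_{q,\alpha}\ast\partial^{\alpha}\Delta_q u$ with $\|K_{q,\alpha}\|_{L^1}=2^{-qk}\|\mathscr{F}^{-1}n_{\alpha}\|_{L^1}$, and Young's inequality against the $L^1$ kernel produces
\begin{equation*}
2^{qk}\|\Delta_q u\|_{L^a}\le C^k\sup_{|\alpha|=k}\|\partial^{\alpha}\Delta_q u\|_{L^a},
\end{equation*}
which is the claimed lower bound.

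The only genuinely delicate point — and the one I would single out as the main obstacle — is the geometric dependence $C^k$ on the differentiation order $k$. The dependence on $q$ is automatic, being pure scaling, but to obtain $C^k$ rather than a faster-growing constant I must control the $L^c$ (resp. $L^1$) norms of $\partial^{\alpha}\mathscr{F}^{-1}\tilde\chi$ and of $\mathscr{F}^{-1}n_{\alpha}$ by $C^k$ uniformly in $\alpha$; this reduces to bounding finitely many Schwartz seminorms of $(i\xi)^{\alpha}\tilde\chi$ and of $\eta^{\alpha}|\eta|^{-2k}\tilde\varphi$ geometrically in $k$, and to absorbing the number $\binom{k+2}{2}\le C^k$ of multi-indices with $|\alpha|=k$ together with the multinomial weights $\frac{k!}{\alpha!}$ (whose sum is $3^k$). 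Finally, it is worth emphasising that the reverse inequality genuinely requires the spectral support to avoid the origin: this holds for the blocks $\Delta_q$ with $q\in\NN$, whereas for $q=-1$ (and for $S_q$) the spectrum reaches the origin and the lower bound is no longer available through this scheme.
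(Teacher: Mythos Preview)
The paper does not give its own proof of this lemma; it simply states the Bernstein inequalities and refers to \cite{Bahouri-Chemin-Danchin,Chemin} for details. Your argument is precisely the standard one found in those references: write the localised function as a convolution with a rescaled Schwartz kernel, apply Young's inequality, and for the reverse estimate exploit that $|\xi|$ stays bounded away from the origin on the annulus so that $|\xi|^{-2k}$ can be inserted via a smooth multiplier. There is nothing to correct in the strategy.

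Two remarks are worth recording. First, your computation gives the exponent $k+3\big(\tfrac1a-\tfrac1b\big)$, which is the correct one in $\RR^3$; the factor $2$ in the displayed statement \eqref{Eq:000} is a typographical slip in the paper. Second, you are right to flag that the lower bound in \eqref{Eq:0000} is only available for $q\in\NN$: for $q=-1$ the block $\Delta_{-1}=\chi(\DD)$ has spectrum reaching the origin, so the multiplier $\eta^{\alpha}|\eta|^{-2k}\tilde\varphi(\eta)$ used in your inversion step is no longer smooth and the reverse inequality genuinely fails. The paper's phrasing ``every $q\in\NN\cup\{-1\}$'' should be read as applying only to the direct inequalities.
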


A remarquable consequence of Bernstein inequality \eqref{Eq:0000} is given in the following remark. 

\begin{Rema}
\begin{equation*}
B_{p,r}^{s}\hookrightarrow B^{\widetilde s}_{\widetilde p,\widetilde r}\quad \textnormal{whenever}\; \widetilde{p}\ge p,
\end{equation*}
with
\begin{equation*}
\widetilde s<s-2\Big(\frac{1}{p}-\frac{1}{\widetilde p}\Big)\quad\textnormal{or}\quad \widetilde s=s-2\Big(\frac{1}{p}-\frac{1}{\widetilde{p}}\Big) \quad\textnormal{and}\quad \widetilde r\le r.
\end{equation*}
\end{Rema}

The following lemma gives the gain action (smoothing effects) of the dyadic block on the heat semi-group.

\begin{lem}\label{Lem:2} There exists a positive constant $C$ such that for $T\ge 0, q\ge-1$ and $p\in[1,\infty]$ we have
\begin{equation}\label{heat-kernel estimate}
 \Big\|\int_0^\infty{\bf 1}_{\{\tau\leq \cdot\}}e^{\tau\Delta} \Delta_q u(\tau)\Big\|_{L_T^{\kappa_1}(L^p)} \le C2^{2q(-1+{1}/\kappa_2-{1}/{\kappa_1})} \|\Delta_q u\|_{L^{\kappa_2}_T(L^p) }, \quad \mbox{ for all }\;\kappa_1\in [\kappa_2,\infty].
\end{equation}
\end{lem}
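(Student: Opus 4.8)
The operator inside the norm is the Duhamel term of the heat flow localized at frequency $2^q$; evaluated at time $t$ it reads $\int_0^t e^{(t-\tau)\Delta}\Delta_q u(\tau)\,d\tau$. The plan is to reduce the whole inequality to a one–dimensional convolution in the time variable and apply Young's inequality, the gain $2^{2q(-1+1/\kappa_2-1/\kappa_1)}$ being produced entirely by the exponential decay of the heat semigroup on frequency–annulus–localized data. Thus the proof rests on two ingredients: a pointwise–in–time localized smoothing estimate for $e^{s\Delta}\Delta_q$, and a convolution estimate in time.

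First I would establish the localized smoothing estimate: there exist $c,C>0$ such that for every $q\ge 0$, every $s\ge 0$ and every $p\in[1,\infty]$,
\[
\|e^{s\Delta}\Delta_q g\|_{L^p}\le C\, e^{-c 2^{2q} s}\,\|\Delta_q g\|_{L^p}.
\]
To prove it, write $e^{s\Delta}\Delta_q g = g_{q,s}\ast \Delta_q g$ with $g_{q,s}=\mathscr F^{-1}\big(e^{-s|\xi|^2}\varphi(2^{-q}\xi)\big)$. Since $\varphi$ is supported in the annulus $\mathcal A(0,1/2,1)$, the change of variables $\xi=2^q\eta$ gives $g_{q,s}(x)=2^{3q}G(2^qx)$ with $G(y)=\int e^{iy\cdot\eta}e^{-s2^{2q}|\eta|^2}\varphi(\eta)\,d\eta$, so that $\|g_{q,s}\|_{L^1}=\|G\|_{L^1}$. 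Bounding $\|(1+|y|^2)^2G\|_{L^\infty}$ by repeated integration by parts and using that on $\supp\varphi$ one has $|\eta|\sim1$, hence $e^{-s2^{2q}|\eta|^2}\le e^{-c s 2^{2q}}$ while the polynomial factors $s2^{2q}$ coming from differentiating the Gaussian are absorbed by the exponential, yields $\|g_{q,s}\|_{L^1}\le C e^{-c 2^{2q}s}$ uniformly in $q$ and $p$. Young's inequality $\|g_{q,s}\ast \Delta_q g\|_{L^p}\le\|g_{q,s}\|_{L^1}\|\Delta_q g\|_{L^p}$ then gives the claim; this is the standard annular heat–flow estimate, see \cite{Bahouri-Chemin-Danchin,Chemin}.

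Second, using this with $s=t-\tau$ I bound, for fixed $t$,
\[
\Big\|\int_0^t e^{(t-\tau)\Delta}\Delta_q u(\tau)\,d\tau\Big\|_{L^p}\le C\int_0^t e^{-c2^{2q}(t-\tau)}\|\Delta_q u(\tau)\|_{L^p}\,d\tau=(k_q\ast h)(t),
\]
where $h(\tau)=\|\Delta_q u(\tau)\|_{L^p}\mathbf 1_{[0,T]}(\tau)$ and $k_q(s)=Ce^{-c2^{2q}s}\mathbf 1_{\{s\ge0\}}$. Taking the $L^{\kappa_1}_T$ norm and applying Young's convolution inequality in time with exponents satisfying $\tfrac1\rho=1+\tfrac1{\kappa_1}-\tfrac1{\kappa_2}$ gives $\|k_q\ast h\|_{L^{\kappa_1}_T}\le\|k_q\|_{L^\rho(\RR_+)}\|h\|_{L^{\kappa_2}_T}$. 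The hypothesis $\kappa_1\ge\kappa_2$ forces $\tfrac1\rho\le1$, i.e.\ $\rho\in[1,\infty]$, so Young applies, and an explicit computation gives $\|k_q\|_{L^\rho(\RR_+)}=C(c\rho\,2^{2q})^{-1/\rho}\simeq 2^{-2q/\rho}=2^{2q(-1+1/\kappa_2-1/\kappa_1)}$, which is exactly the announced factor. This establishes \eqref{heat-kernel estimate} for $q\ge 0$.

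The one delicate point is the low–frequency block $q=-1$: since $\Delta_{-1}=\chi(\DD)$ localizes near $\xi=0$, the heat semigroup is there only a contraction, $\|e^{s\Delta}\Delta_{-1}g\|_{L^p}\le C\|\Delta_{-1}g\|_{L^p}$, with no exponential decay, so the convolution argument does not by itself furnish a constant independent of $T$ for this block. I would treat the genuinely smoothing estimate for $q\ge0$ as above and handle $q=-1$ separately, either through the contractivity bound or by reading $\Delta_{-1}$ in the homogeneous framework, where every dyadic block carries annular support and the decay is uniform. The main obstacle is therefore not the time convolution—routine once the kernel is identified—but the localized decay $\|g_{q,s}\|_{L^1}\lesssim e^{-c2^{2q}s}$, whose constants $c,C$ must be made uniform in both $q$ and $p$; everything else follows by Young's inequality.
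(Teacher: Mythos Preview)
Your proof is correct and follows exactly the same strategy as the paper: the frequency-localized exponential decay $\|e^{s\Delta}\Delta_q g\|_{L^p}\le Ce^{-c2^{2q}s}\|\Delta_q g\|_{L^p}$ followed by Young's convolution inequality in time with exponent $1/\rho=1+1/\kappa_1-1/\kappa_2$. Your discussion of the $q=-1$ block is in fact more careful than the paper's own proof, which simply asserts the exponential decay for all $q\ge-1$ without distinguishing the ball-supported low-frequency piece; in the applications the lemma is only invoked for homogeneous blocks, so your remark that one should read $\Delta_{-1}$ in the homogeneous framework is exactly the right fix.
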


\begin{proof} Let $u\in L^p$ with $p\in[1,\infty]$. Since, for $q\ge-1,\;\supp\widehat{\Delta_q u}$ includes in the ball $\mathscr{B}(0,2^q)$ or in the annulus $\mathscr{A}(0,2^{q},2^{q+1})$,  then for $t\ge0$ we claim in view of convolution product and integration by parts that there exist two constants $c$ and $C$ such that
\begin{equation*}
\|e^{t\Delta}\Delta_q u(t)\|_{L^p}\le Ce^{-ct2^{2q}}\|\Delta_qe^{t\Delta}u(t)\|_{L^p}.
\end{equation*}
So, for all $\tau\in [0,t]$ we have
\begin{equation*}
 \Big\|\int_0^\infty{\bf 1}_{\{\tau\leq \cdot\}}e^{(\cdot-\tau)\Delta} \Delta_q u(\tau)\Big\|_{L^p} \le C\int_{0}^\tau e^{-c(\tau-\tau')2^{2q}}\|\Delta_q u(\tau')\|_{L^p }d\tau'. 
\end{equation*}
Via, Young's inequality with respect to time, it follows for $\kappa_1\in[\kappa_2,\infty]$ with $1/\kappa=1+1/\kappa_1-1/\kappa_2$
\begin{eqnarray*}
 \Big\|\int_0^\infty{\bf 1}_{\{\tau\leq \cdot\}}e^{(\cdot-\tau)\Delta} \Delta_q u(\tau)\Big\|_{L_T^{\kappa_1}L^p}& \le & C \Bigg(\frac{1-e^{-c\kappa_2 t2^{2q}}}{c\kappa 2^{2q}}\Bigg)^{1/\kappa}\|\Delta_q u\|_{L_T^{\kappa_2}L^p }\\
&\lesssim & 2^{2q(-1+1/\kappa_2 -1/\kappa_1)}\|\Delta_q u\|_{L_T^{\kappa_2}L^p },
\end{eqnarray*}
this completes the proof.
\end{proof}

The last result in this section deals with the logarithmic estimate in more general case first established by M. Vishik \cite{Vishik} for transport regim and devlopped later by T. Hmidi and S. Keraani \cite{Hmidi-Keraani00} for transport-diffusion one. In other words, we have. 

\begin{prop}\label{prop1} For $T>0$, let  $v$ be a divergence free vector field belonging to $L^1_T\big([0,T];\dot B^{1}_{\infty,1}\big)$. Then for $\Theta_0\in \dot B^{0}_{p,r}, \;f\in \widetilde{L}^1_{T}(\dot B^{0}_{p,r})$ the following transport-diffusion equation.
\begin{equation*}
\left\{\begin{array}{ll}
\partial_t\Theta+v\cdot\nabla\Theta-\mu\Delta\Theta=f &\\
\Theta_{t=0}=\Theta_0,
\end{array}
\right.
\end{equation*}
admits a unique solution $\Theta\in \widetilde{L}^{\infty}_{T}(\dot B^{0}_{p,r})$ satisfying for $t\in [0,T]$ 
\begin{equation*}
\|\Theta\|_{\widetilde{L}^{\infty}_{T}(\dot B^{0}_{p,r})}\le C\Big(\|\Theta_0\|_{\dot B^{0}_{p,r}}+\|f\|_{\widetilde{L}^1_{T}(\dot B^{0}_{p,r})}\Big)\Big(1+\int_{0}^{t}\|\nabla v(\tau)\|_{\dot B^{0}_{\infty,1}}d\tau\Big).
\end{equation*}
with $C$ being a universal constant.
\end{prop}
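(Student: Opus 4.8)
The plan is to localize the equation in frequency, derive an $L^p$ bound for each Littlewood--Paley block, and then upgrade the resulting integral inequality to the announced \emph{linear} dependence on $V(t):=\int_0^t\|\nabla v(\tau)\|_{\dot B^{0}_{\infty,1}}\,d\tau$. Set $\theta_q:=\dot\Delta_q\Theta$. Applying $\dot\Delta_q$ to the equation, and using that $\dot\Delta_q$ commutes with $\Delta$, I would write it in the transported form
\[
\partial_t\theta_q+v\cdot\nabla\theta_q-\mu\Delta\theta_q=\dot\Delta_q f-[\dot\Delta_q,v\cdot\nabla]\Theta ,
\]
so that the forcing splits into $\dot\Delta_q f$ and the commutator $[\dot\Delta_q,v\cdot\nabla]\Theta$. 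The reason for keeping $v\cdot\nabla\theta_q$ on the left, rather than moving it into the forcing, is that $\Div v=0$, which will make this drift invisible in the $L^p$ energy balance.

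First I would establish the per-frequency smoothing bound. Multiplying by $|\theta_q|^{p-2}\theta_q$ and integrating in $x$, the drift term vanishes since $v$ is divergence free, while the generalized Bernstein positivity inequality
\[
\int_{\RR^3}(-\Delta\theta_q)\,|\theta_q|^{p-2}\theta_q\,dx\ \ge\ c\,2^{2q}\|\theta_q\|_{L^p}^{p}
\]
(valid because $\widehat{\theta_q}$ is supported in an annulus of size $2^q$; for $p=\infty$ one argues instead by the maximum principle) controls the dissipation with a favourable sign. This yields
\[
\frac{d}{dt}\|\theta_q\|_{L^p}+c\mu\,2^{2q}\|\theta_q\|_{L^p}\le\|\dot\Delta_q f\|_{L^p}+\|[\dot\Delta_q,v\cdot\nabla]\Theta\|_{L^p},
\]
and integrating in time — the exponential factor produced by the diffusion is $\le 1$, so the estimate is uniform in $\mu\ge 0$, consistent with the $\mu$-free right-hand side — gives
\[
\sup_{\tau\le t}\|\theta_q(\tau)\|_{L^p}\le\|\dot\Delta_q\Theta_0\|_{L^p}+\int_0^t\|\dot\Delta_q f(\tau)\|_{L^p}\,d\tau+\int_0^t\|[\dot\Delta_q,v\cdot\nabla]\Theta(\tau)\|_{L^p}\,d\tau .
\]

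Next I would bound the commutator by Bony's decomposition. Writing $v\cdot\nabla\Theta=\Div(v\Theta)$ and splitting into the two paraproducts and the remainder, the piece coming from $T_v\nabla\Theta$ produces the genuine commutator $[\dot\Delta_q,\dot S_{j-1}v\cdot\nabla]\dot\Delta_j\Theta$ with $|j-q|\le 4$, which a first-order Taylor expansion of the kernel of $\dot\Delta_q$ bounds by $\|\nabla\dot S_{j-1}v\|_{L^\infty}\|\dot\Delta_j\Theta\|_{L^p}$; the other paraproduct $T_{\nabla\Theta}v$ and the remainder are estimated directly, each contribution being near-diagonal in frequency with summable off-diagonal weights. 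Collecting the pieces and using $\|\nabla\dot S_{j-1}v\|_{L^\infty}\le\|\nabla v\|_{\dot B^{0}_{\infty,1}}$ gives the borderline estimate
\[
\|[\dot\Delta_q,v\cdot\nabla]\Theta(\tau)\|_{L^p}\le C\,c_q(\tau)\,\|\nabla v(\tau)\|_{\dot B^{0}_{\infty,1}}\,\|\Theta(\tau)\|_{\dot B^{0}_{p,r}},\qquad \big\|(c_q(\tau))_{q}\big\|_{\ell^r}\le 1 .
\]
The choice of $\dot B^{0}_{\infty,1}$ (rather than merely $L^\infty$) for $\nabla v$ is exactly what makes this endpoint sum converge; this is the \emph{logarithmic} feature of the estimate.

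Finally I would sum over $q\in\ZZ$. Multiplying by $2^{0\cdot q}=1$ and taking the $\ell^r$ norm leads to
\[
\|\Theta\|_{\widetilde L^\infty_t(\dot B^{0}_{p,r})}\le\|\Theta_0\|_{\dot B^{0}_{p,r}}+\|f\|_{\widetilde L^1_t(\dot B^{0}_{p,r})}+C\int_0^t\|\nabla v(\tau)\|_{\dot B^{0}_{\infty,1}}\,\|\Theta\|_{\widetilde L^\infty_\tau(\dot B^{0}_{p,r})}\,d\tau .
\]
A direct Gronwall argument applied here would only yield the exponential bound $e^{CV(t)}$; producing the \emph{linear} factor $1+V(t)$ is the main obstacle and is the heart of the Vishik/Hmidi--Keraani estimate. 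The mechanism I would invoke is that, at regularity zero, the transport by $v$ mixes a given dyadic frequency only with frequencies at distance $\lesssim V(t)$ from it (finite speed of frequency propagation, with speed governed by $\|\nabla v\|_{\dot B^{0}_{\infty,1}}$); convolving an $\ell^r$-summable profile with a band of width $\sim V(t)$ costs, by Young's inequality, only a factor $1+V(t)$, whereas iterating the near-diagonal spreading through Gronwall wastefully recompounds it into $e^{CV(t)}$. Making this precise purely within the Littlewood--Paley/commutator framework is the delicate point, and it is exactly where the hypothesis $\nabla v\in L^1_T(\dot B^{0}_{\infty,1})$ is used. Existence of $\Theta$ then follows by regularizing $(\Theta_0,f)$ and $v$, solving the resulting linear transport-diffusion problem classically, applying the a priori bound uniformly and passing to the limit; uniqueness follows by applying the same estimate to the difference of two solutions, which solves the homogeneous equation with zero data and hence vanishes.
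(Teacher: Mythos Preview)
The paper does not supply its own proof of this proposition: it is stated in the ``Technical tool box'' section as a known result, with the attribution ``first established by M.~Vishik \cite{Vishik} for transport regime and developed later by T.~Hmidi and S.~Keraani \cite{Hmidi-Keraani00} for transport-diffusion one.'' So there is no in-paper argument to compare against.

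Your sketch is the correct outline of the Vishik/Hmidi--Keraani proof and matches what those references do: localize, use the divergence-free drift and the generalized Bernstein lower bound on $-\Delta$ to get the per-block inequality (uniformly in $\mu\ge0$), estimate the commutator $[\dot\Delta_q,v\cdot\nabla]\Theta$ via Bony's decomposition in the endpoint $\dot B^0_{\infty,1}$ norm of $\nabla v$, and then upgrade Gronwall's exponential to the linear factor $1+V(t)$. You are right that the last step is the nontrivial one and that this is precisely the content of \cite{Vishik,Hmidi-Keraani00}; your heuristic of ``finite speed of frequency propagation'' is the right intuition, but to make it rigorous one typically subdivides $[0,t]$ into intervals on which $\int\|\nabla v\|_{\dot B^0_{\infty,1}}\,d\tau$ is small, applies the rough Gronwall bound on each subinterval, and then sums, the number of subintervals being $O(1+V(t))$. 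Since the paper itself defers entirely to the references for this, your proposal is at least as detailed as what the paper provides.
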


\hspace{0.5cm}Now, we will introduce some estimates between the velocity and its vorticty in the framework of axisymmetirc geometry along the following lines which are strongly useful in section \ref{First-result}. First, let us denote by $v$ the velocity vector field and $\omega$ its vorticity, with $\omega\triangleq\Curl v=\omega_{\theta}\vec e_{\theta}$. Consequently, for $x\in\RR^3$ with $r=\sqrt{x_1^2+x_2^2}\ge0$ we have $|\Omega(x)|=|\omega_{\theta}(x)|$ and 
\begin{equation*}
|\nabla\omega_{\theta}|\approx|\partial_r\omega_{\theta}(x)|+|\partial_z\omega_{\theta}(x)|+\Big|\frac{\omega_{\theta}(x)}{r}\Big|.
\end{equation*}
So, some equivalent norms beween $v$ and $\omega$ are given by the following. The proof can be found in \cite{Leonardi-Malek-Necas-Pokorny}.
\begin{lem}
Let $v$ be a smooth vector field, divergence free and axisymmetric. Then the following assertions are hold.
\begin{equation}\label{Eq:1-eqv}
\|\omega\|_{L^p}\approx\|\nabla v\|_{L^p},\quad p\in]1,\infty[.
\end{equation}
\begin{equation}\label{Eq:2-eqv}
\|D\omega_{\theta}\|_{L^p}+\Big\|\frac{\omega_{\theta}}{r}\Big\|_{L^p}\approx\|\nabla^2 v\|_{L^p}, \quad p\in]1,\infty[.
\end{equation}
\begin{equation}\label{Eq:3-eqv}
\|D^2\omega_{\theta}\|_{L^2}+\Big\|D\Big(\frac{\omega_{\theta}}{r}\Big)\Big\|_{L^2}\le C\|\nabla^3 v\|_{L^2}.
\end{equation}
The symbol $\approx$ meaning the equivalence between two norms and by $D$ we understand $(\partial_r,\partial_z)$, while $\nabla$ designates $(\partial_{x_1},\partial_{x_2},\partial_z)$.
\end{lem}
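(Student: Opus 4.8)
The plan is to deduce all three statements from a single ingredient, the Biot--Savart representation of $v$ in terms of $\omega$ together with the $L^p$-boundedness of homogeneous Calder\'on--Zygmund operators, and then to translate the Cartesian gradients of $\omega$ into the cylindrical quantities $D\omega_\theta$ and $\omega_\theta/r$ by means of the pointwise equivalence recorded just above the statement. Since $\Div v=0$, one has $-\Delta v=\Curl\,\omega$, hence $v=(-\Delta)^{-1}\Curl\,\omega$ and
\begin{equation*}
\nabla v = T\omega,\qquad T:=\nabla(-\Delta)^{-1}\Curl,
\end{equation*}
where $T$ is a matrix of Fourier multipliers homogeneous of degree $0$, therefore bounded on $L^p(\RR^3)$ for every $p\in\,]1,\infty[$. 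The crucial structural remark is that $T$ has constant coefficients, so it commutes with every partial derivative $\partial_j$.

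For \eqref{Eq:1-eqv} I would argue in both directions: the bound $\|\omega\|_{L^p}=\|\Curl\,v\|_{L^p}\le C\|\nabla v\|_{L^p}$ is immediate because $\omega$ is a first-order constant-coefficient combination of $\nabla v$, while the reverse bound is exactly $\|\nabla v\|_{L^p}=\|T\omega\|_{L^p}\le C\|\omega\|_{L^p}$. For \eqref{Eq:2-eqv} I would differentiate the identity $\nabla v=T\omega$ and use the commutation to get $\nabla^2 v = T(\nabla\omega)$, whence $\|\nabla^2 v\|_{L^p}\le C\|\nabla\omega\|_{L^p}$, the converse inequality being again the trivial differential relation $\nabla\omega=\nabla\,\Curl\,v$. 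It then remains to integrate the pointwise equivalence $|\nabla\omega|\approx|\partial_r\omega_\theta|+|\partial_z\omega_\theta|+|\omega_\theta/r|$ to obtain $\|\nabla\omega\|_{L^p}\approx\|D\omega_\theta\|_{L^p}+\|\omega_\theta/r\|_{L^p}$, which combined with the previous step yields \eqref{Eq:2-eqv}.

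The last estimate \eqref{Eq:3-eqv} follows the same scheme one order higher: differentiating twice gives $\nabla^3 v=T(\nabla^2\omega)$, so $\|\nabla^2\omega\|_{L^2}\le C\|\nabla^3 v\|_{L^2}$, and the task reduces to controlling the cylindrical quantities $\|D^2\omega_\theta\|_{L^2}+\|D(\omega_\theta/r)\|_{L^2}$ by $\|\nabla^2\omega\|_{L^2}$. Here one writes out the Cartesian second derivatives of the vector field $\omega=\omega_\theta\vec e_\theta$ in cylindrical coordinates; besides the genuine second derivatives $D^2\omega_\theta$ this produces the singular contributions $\partial_r(\omega_\theta/r)$, $\partial_z(\omega_\theta/r)$ and $\omega_\theta/r^2$ coming from the angular variation of $\vec e_\theta$.

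I expect the main obstacle to be precisely the treatment of these singular terms, which is also the reason the statement is confined to $p=2$ and to a one-sided bound. The plan is to exploit the Hilbert-space structure: expanding $\|\nabla^2\omega\|_{L^2}^2$ in cylindrical coordinates against the measure $r\,dr\,dz$, one recombines the cross terms by integration by parts and dominates the $1/r$-singular pieces through the one-dimensional Hardy inequality $\int_0^\infty |f/r|^2\,r\,dr\le C\int_0^\infty|\partial_r f|^2\,r\,dr$, which applies because smoothness of $\omega$ on the axis forces $\omega_\theta$ to vanish (indeed $\omega_\theta\sim r$) as $r\to0$. This orthogonality-and-integration-by-parts argument is intrinsically $L^2$, so it does not upgrade to a norm equivalence or to general $p$, matching the form in which \eqref{Eq:3-eqv} is stated.
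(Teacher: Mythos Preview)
The paper does not supply its own proof of this lemma: it records the pointwise identity
\[
|\nabla\omega|^2=|\partial_r\omega_\theta|^2+|\partial_z\omega_\theta|^2+\Big|\frac{\omega_\theta}{r}\Big|^2
\]
(stated there as an equivalence) and defers the three estimates to \cite{Leonardi-Malek-Necas-Pokorny}. Your outline---Biot--Savart $\nabla v=T\omega$ with $T=\nabla(-\Delta)^{-1}\Curl$ a matrix of homogeneous degree-zero multipliers, commutation with $\partial_j$ to pass to higher orders, and the cylindrical expansion of $\nabla^k\omega$ together with integration by parts/Hardy in $L^2$ for the singular pieces---is the standard route and is essentially what that reference does, so the proposal is sound and aligned with the intended argument.

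One small slip worth correcting: in your treatment of \eqref{Eq:3-eqv} you write $\nabla^3 v=T(\nabla^2\omega)$ and deduce $\|\nabla^2\omega\|_{L^2}\le C\|\nabla^3 v\|_{L^2}$, but boundedness of $T$ gives the inequality in the opposite direction. The bound you actually need here is the ``trivial'' one coming from $\nabla^2\omega=\nabla^2\Curl\,v$, which you already invoked at the previous step; with that fix the argument goes through unchanged.
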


\section{Proof of the main results}
To state the proof of main results, we first need to analyze some classical estimates in Sobolev and Besov spaces that will be more useful in the proof of our main Theorems. 

We begin by proving the following proposition
\begin{prop}\label{Pr:3} For $s\in\big[\frac{1}{2}, \frac{3}{2}\big)$, there exists $C>0$ such that for $a,b$ and $c$ are regular enough, the following assertions are hold. 
\begin{equation}\label{Eq:6}
\big\langle  a\cdot \nabla c | b \big\rangle_{\dot{s}} \leq C \|a\|_{\dot{H}^{\frac{1}{2}}} ^2\|\nabla c\|_{\dot{H}^s} ^2+ \frac{1}{100}  \|\nabla b\|_{\dot{H}^s}.
\end{equation}
\begin{equation}\label{Eq:7}
\big\langle a\cdot \nabla b | b \big\rangle_{\dot{s}} \leq C \|a\|_{\dot{H}^{\frac{1}{2}}}\|\nabla b\|_{\dot{H}^s}^2
\end{equation}
 \begin{equation}\label{Eq:8}
\big\langle b\cdot \nabla a | b \big\rangle_{\dot{s}} \leq  \|\nabla a\|_{\dot{H}^{\frac{1}{2}}}^2\| b\|_{\dot{H}^s} ^2+\frac{1}{100} \|\nabla b\|_{\dot{H}^s}^2. 
 \end{equation}
\begin{equation}\label{Eq:N9}
 \big\langle a \cdot \nabla b | b  \big\rangle_{\dot s}  \leq C \| a\|_{\dot{H}^1}^4 \| b\|_{\dot{H}^s}^2  + \frac{1}{100} \|\nabla b\|_{\dot{H}^s}^2.  
 \end{equation}

For $s=\frac{3}{2}$, we have
 \begin{equation}\label{Eq:N10}
 \big\langle \Div( a\otimes b)| b \big\rangle_{\dot{\frac{3}{2}}} +\big\langle \Div( b\otimes a) | b \big\rangle_{\dot{\frac{3}{2}}} \leq C\big(\|a\|_{\dot{H}^1}^2 \|b\|_{\dot{H}^2}^2 + \|b\|_{\dot{H}^1}^2 \|a\|_{\dot{H}^2}^2\big) +  \frac{1}{100} \|\nabla b\|_{\dot{H}^\frac{3}{2}}^2.  
 \end{equation}
  And for $s= 2$, we have
 \begin{equation}\label{Eq:11}
\big\langle   a\cdot \nabla b | c \big\rangle_{\dot{2}}   \lesssim \|  a\|_{\dot{H}^1} \big(  \| \nabla b \|_{\dot{H} ^\frac{1}{2}}+ \| \nabla b \|_{\dot{H} ^2}\big) \| c\|_{\dot{H}^3}
 \end{equation}
\begin{equation}\label{Eq:12}
\big\langle   a\cdot \nabla b | c \big\rangle_{\dot{2}}   \lesssim  \big(  \| a \|_{\dot{H} ^\frac{1}{2}}+ \| a \|_{\dot{H} ^2}\big)\|   \nabla b\|_{\dot{H}^1}  \| c\|_{\dot{H}^3}
 \end{equation}
\end{prop}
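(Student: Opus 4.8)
The plan is to treat all the inequalities \eqref{Eq:6}--\eqref{Eq:12} by the same mechanism: decompose each trilinear form into Littlewood--Paley pieces, use Bony's paraproduct decomposition \eqref{Eq:} to write $a\cdot\nabla c$ (or $a\cdot\nabla b$, etc.) as $T_{a}\nabla c + T_{\nabla c} a + \mathscr R(a,\nabla c)$, and estimate each of the three pieces by combining Bernstein's inequalities (Lemma \ref{Lem:1}), Hölder in $x$, and the Sobolev product law $\dot H^{s_1}\cdot\dot H^{s_2}\hookrightarrow \dot H^{s_1+s_2-\frac32}$ valid in $\RR^3$ when $s_1,s_2<\frac32$ and $s_1+s_2>0$. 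Throughout, the divergence-free hypothesis $\Div a=0$ is what makes the ``$b\cdot b$'' forms antisymmetric-type, so that in \eqref{Eq:7} one really only sees one derivative landing on $b$; I would record $\langle a\cdot\nabla b\,|\,b\rangle_{\dot s} = \sum_q 2^{2qs}\langle \dot\Delta_q(a\cdot\nabla b)\,|\,\dot\Delta_q b\rangle$ and commute $\dot\Delta_q$ past $a\cdot\nabla$ via the standard commutator $[\dot\Delta_q, a\cdot\nabla]b$, whose $L^2$ norm is controlled by $\|\nabla a\|_{L^\infty\cap\dots}$-type quantities, or more precisely by $c_q 2^{-qs}\|\nabla a\|_{\dot H^{1/2}}\|\nabla b\|_{\dot H^s}$ with $(c_q)\in\ell^1$. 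For \eqref{Eq:7} this gives directly the clean bound $\|a\|_{\dot H^{1/2}}\|\nabla b\|_{\dot H^s}^2$ since $\|\nabla a\|_{\dot H^{-1/2}}\simeq\|a\|_{\dot H^{1/2}}$.

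For the ``mixed'' estimates \eqref{Eq:6} and \eqref{Eq:8}, the idea is the same but now the output need not be absorbed into a single $\|\nabla b\|$; instead one splits the trilinear term so that part of it is genuinely bounded by $\frac1{100}\|\nabla b\|_{\dot H^s}^2$ (this is the piece where two derivatives' worth of regularity sit on $b$) and the remainder is a product of the form $\|a\|_{\dot H^{1/2}}^2\|\nabla c\|_{\dot H^s}^2$ (resp. $\|\nabla a\|_{\dot H^{1/2}}^2\|b\|_{\dot H^s}^2$). Concretely: write $\langle a\cdot\nabla c\,|\,b\rangle_{\dot s}\le \|a\cdot\nabla c\|_{\dot H^{s-1}}\|\nabla b\|_{\dot H^{s-1}}$ — wait, the natural pairing in $\dot H^s$ of $a\cdot\nabla c$ against $b$ costs $\|a\cdot\nabla c\|_{\dot H^{s-1}}\|b\|_{\dot H^{s+1}}$, which already contains $\|\nabla b\|_{\dot H^s}$; then Young's inequality $xy\le \frac1{100}x^2 C + C y^2$ separates off the small term, and the product law bounds $\|a\cdot\nabla c\|_{\dot H^{s-1}}\lesssim \|a\|_{\dot H^{1/2}}\|\nabla c\|_{\dot H^{s}}$ because $\frac12 + s - \frac32 = s-1$ and the constraint $\frac12<\frac32$, $s<\frac32$ is exactly the hypothesis $s\in[\frac12,\frac32)$. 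Inequality \eqref{Eq:N9} is the same as \eqref{Eq:N10}'s single-term cousin but with $\dot H^{1/2}$ replaced throughout by $\dot H^1$ via the interpolation $\|a\|_{\dot H^{1/2}}\lesssim\|a\|_{L^2}^{?}\dots$ — actually one uses $\dot H^1\cdot\dot H^1\hookrightarrow \dot H^{1/2}$ (legal: $1+1-\frac32=\frac12$) so $\|a\cdot\nabla b\|$ can be estimated entirely in terms of $\|a\|_{\dot H^1}$, producing the quartic $\|a\|_{\dot H^1}^4$ after one more Young step. The $s=\frac32$ case \eqref{Eq:N10} is handled by first integrating by parts to move a derivative off the divergence, symmetrizing the two terms, and then applying $\dot H^1\cdot\dot H^2\hookrightarrow \dot H^{3/2}$. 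The $s=2$ estimates \eqref{Eq:11}--\eqref{Eq:12} are pure product-law bookkeeping: pair in $\dot H^2$, i.e. bound $\langle a\cdot\nabla b\,|\,c\rangle_{\dot 2}\le \|a\cdot\nabla b\|_{\dot H^1}\|c\|_{\dot H^3}$, and estimate $\|a\cdot\nabla b\|_{\dot H^1}$ by splitting low/high frequencies of $\nabla b$ — on low frequencies use $\|a\|_{\dot H^1}\|\nabla b\|_{\dot H^{1/2}}$-type terms, on high frequencies $\|a\|_{\dot H^1}\|\nabla b\|_{\dot H^2}$ — which is exactly the shape of the right-hand sides, with the two inequalities differing by whether the $\dot H^{1/2}\cap\dot H^2$ norm is charged to $a$ or to $\nabla b$.

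The main obstacle, and the one point requiring genuine care rather than bookkeeping, is the endpoint behavior: the Sobolev product law $\dot H^{s_1}\times\dot H^{s_2}\to\dot H^{s_1+s_2-3/2}$ fails (or only holds in a Besov-space replacement) precisely when one of the indices hits $\frac32$ or when $s_1+s_2=0$, and the statement allows $s=\frac12$, so the pairing in \eqref{Eq:8} with the $\dot H^{1/2}$-norm of $\nabla a$ is borderline. The fix is to never actually use the failing endpoint: in every term I would arrange that the two Sobolev indices being multiplied are \emph{strictly} below $\frac32$ and sum to something \emph{positive}, which is possible because $s<\frac32$ is strict and we always have the extra room coming from the $\frac1{100}\|\nabla b\|_{\dot H^s}^2$ absorption — the absorbed piece is the one that would otherwise hit the endpoint, and Young's inequality trades the dangerous $\|\nabla b\|_{\dot H^s}$ factor for a harmless squared term plus the genuinely subcritical product. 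One should also double-check that the constant $\frac1{100}$ survives the finitely many Young applications per inequality (it does, since each inequality needs only one such split), and that in \eqref{Eq:6} the right-hand side as printed — with $\|\nabla b\|_{\dot H^s}$ to the first power, not squared — is what the authors intend; if it is a typo for $\|\nabla b\|_{\dot H^s}^2$ the proof above delivers it verbatim, and if not, one reads the $\frac1{100}\|\nabla b\|_{\dot H^s}$ as already having absorbed a factor of $\|\nabla b\|_{\dot H^s}$ from a bound that was a priori quadratic.
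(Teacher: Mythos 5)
Your plan is essentially the paper's own argument: for \eqref{Eq:6}--\eqref{Eq:8} pair by duality, $\big\langle a\cdot\nabla c\,|\,b\big\rangle_{\dot s}\le \|a\cdot\nabla c\|_{\dot H^{s-1}}\|b\|_{\dot H^{s+1}}$, apply the product law $\dot H^{1/2}\times\dot H^{s}\hookrightarrow\dot H^{s-1}$ and Young (your reading of the unsquared $\|\nabla b\|_{\dot H^s}$ in \eqref{Eq:6} as a typo matches the paper's Young step, and the commutator you propose for \eqref{Eq:7} is not needed since the duality bound already yields $\|a\|_{\dot H^{1/2}}\|\nabla b\|_{\dot H^s}^2$ directly), while \eqref{Eq:N10} and \eqref{Eq:11}--\eqref{Eq:12} are handled by Bony's decomposition and the $\dot B^{3/2}_{2,1}\subset\dot H^{1/2}+\dot H^{2}$ splitting exactly as you describe. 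The one point to tighten: the bilinear bounds you invoke for \eqref{Eq:N10} ($\dot H^1\times\dot H^2\to\dot H^{3/2}$) and \eqref{Eq:N9} fall outside the range $s_1,s_2<\tfrac32$ of the law you quote, so the former must be proved by the paraproduct--Bernstein computation (as the paper does), and for \eqref{Eq:N9} the correct chain is $\dot H^{1}\times\dot H^{s-1/2}\hookrightarrow\dot H^{s-1}$ followed by $\|b\|_{\dot H^{s+1/2}}\lesssim\|b\|_{\dot H^{s}}^{1/2}\|b\|_{\dot H^{s+1}}^{1/2}$ and Young with exponents $(4,4/3)$, which is what actually produces the quartic $\|a\|_{\dot H^1}^4$.
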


\begin{proof} We restrict ourselves to establish \eqref{Eq:6}. Estimates \eqref{Eq:7} and \eqref{Eq:8} will be done by the same method. By definition we have
\begin{eqnarray*}
\big\langle  a\cdot \nabla c | b \big\rangle_{\dot{s}}^2&\le&\int_{\RR^3}|\xi|^{2s}|(\widehat{a\cdot\nabla c}(\xi))||\overline{\widehat{b}(\xi)}|d\xi\\
\nonumber&\le &\bigg(\int_{\RR^3}|\xi|^{2(s-1)}|\widehat{a\cdot\nabla c}(\xi)|^2d\xi\bigg)^{1/2}\bigg(\int_{\RR^3}|\xi|^{2(s+1)}|\widehat{b}(\xi)|^2\bigg)^{1/2}\\
&\le& \|a\cdot\nabla c\|_{\dot H^{s-1}}\|b\|_{\dot H^{s+1}}. 
\end{eqnarray*}
On the other hand, the embedding $\dot{H}^{ \frac{1}{2}}\times \dot{H}^{s}\hookrightarrow \dot{H}^{s-1}$ for $s\in\big[\frac{1}{2}, \frac{3}{2}\big)$ gives
\begin{equation*}
\|a\cdot\nabla c\|_{\dot H^{s-1}}\le C\|a\|_{\dot H^{\frac12}}\|\nabla c\|_{\dot H^{s}}.
\end{equation*}
Putting together the last two estimates to obtain the scalar product $\big\langle\cdot|\cdot\big\rangle_{\dot s}$ in $\dot H^{s}$,
\begin{equation*}
\big\langle  a\cdot \nabla c | b \big\rangle_{\dot{s}} \le C\|a\|_{\dot H^{\frac12}}\|\nabla c\|_{\dot H^{s}}\|\nabla b\|_{\dot H^{s}}.
\end{equation*}
So, Young's inequality $AB\le CA^2+\frac{1}{100} B^2$ gives the desired estimate.

\hspace{0.5cm}The proof of \eqref{Eq:N9} is ensued from the following product law, 
\begin{equation*}
\dot{H}^{s-\frac12}\times \dot{H}^1 \subset \dot{H}^{s-1} \quad \mbox{for all}\; s\in \big[\frac12,\frac32\big).
\end{equation*}
Indeed, the scalar product in $\dot{H}^s$ together with the product law above give rise to
\begin{equation*}
\big\langle  a \cdot \nabla b | b \big\rangle_{\dot{s}}   \lesssim   \|a\|_{\dot{H}^{1}}\|\nabla b\|_{\dot H^{s-\frac{1}{2}}}  \| b\|_{\dot{H}^{s+1}}
\end{equation*}
Next, in view of the following interpolation inequality in general case
\begin{equation*}
\|f\|_{\dot{H}^{s+\frac12}} \lesssim \|f\|_{\dot{H}^{s }}^\frac{1}{2}\|f\|_{\dot{H}^{s+1 }}^\frac{1}{2}
\end{equation*}
and a new use of Young inequality $AB \leq C A^4 + \frac{1}{100} B^\frac{4}{3}$ implies
\begin{equation*}
\big\langle  a \cdot \nabla b | b \big\rangle_{\dot{s} }  \leq C  \| a\|_{\dot{H}^1}^4 \|b\|_{\dot{H}^s}^2    +  \frac{1}{100} \|\nabla b\|_{\dot{H}^s}^2.  
\end{equation*}
\hspace{0.5cm}To establish \eqref{Eq:N10} for $s=\frac{3}{2}$, we apply Bony's decomposition
\begin{equation*}
 a \otimes b = T_a b + T_b a + R(a,b).
\end{equation*}
The two first terms can be estimated along the same lines as follows
\begin{eqnarray*}
\|\Delta_j T_a b \|_{L^2}&\lesssim& \|S_{j-1}a \|_{L^\infty} \|\Delta_j b \|_{L^2}\\
&\lesssim& c_j 2^{-\frac{3}{2} j}\| a\|_{\dot{B}^{-\frac{1}{2}}_{\infty,\infty}} \|b \|_{\dot{H}^2}, \quad \text{with } \sum_{j\in\ZZ} c_j^2\leq 1.
\end{eqnarray*}
Hence the Sobolev embedding $\dot{H}^1(\RR^3) \hookrightarrow  \dot{B}^{-\frac{1}{2}}_{\infty,\infty}(\RR^3)$ gives the desired estimate for $T_ab$.

For the remainder term, Bernstein inequality and the definition of Besov spaces allow to conclude
\begin{eqnarray*}
\|\Delta_j R(a,b) \|_{L^2} &\lesssim&  2^{\frac{3}{2} j}\sum_{k\geq j+N_0} \|\widetilde{\Delta}_k a \|_{L^2} \| \Delta_kb\|_{L^2} \\
&\lesssim& 2^{\frac{3}{2} j}\sum_{k\geq j+N_0} c_k  2^{-3k} \|  a \|_{\dot{H}^1} \|  b\|_{\dot{B}^2_{2,\infty}}\\
&\lesssim& 2^{-\frac{3}{2} j}\sum_{k\geq j+N_0} c_k  2^{3(j-k)} \|  a \|_{\dot{H}^1} \|  b\|_{\dot{H}^2}
\end{eqnarray*} 
thus, by using H\"older inequality, we finally obtain
\begin{equation*}
\|\Delta_j R(a,b) \|_{L^2}  \lesssim 2^{-\frac{3}{2} j} c_j \|  a \|_{\dot{H}^1} \|  b\|_{\dot{H}^2}.
\end{equation*}
\hspace{0.5cm}Finally, we only prove \eqref{Eq:11} because \eqref{Eq:12} will be done by the same fashion and we left it to the reader. The definition of the scalar product in $\dot{H}^2$ gives
\begin{equation*}
\big\langle   a\cdot \nabla b | c \big\rangle_{\dot{2}}   \lesssim \|  a\cdot \nabla b \|_{\dot{H}^1} \| c\|_{\dot{H}^3}.
\end{equation*}

Then, by using the following three-dimensional law product, see \cite{Bahouri-Chemin-Danchin}
\begin{equation*}
\dot{H}^1 \times \dot{B}^\frac{3}{2}_{2,1} \hookrightarrow \dot{H}^1
\end{equation*}
we infer that
\begin{equation*}
\big\langle   a\cdot \nabla b | c \big\rangle_{\dot{2}}   \lesssim \|  a\|_{\dot{H}^1} \| \nabla b \|_{\dot{B}^\frac{3}{2}_{2,1}} \| c\|_{\dot{H}^3}
\end{equation*}
combined with the following interpolation estimate which its proof can found again in \cite{Bahouri-Chemin-Danchin}
\begin{equation*}
\| f \|_{ \dot{B}^\frac{3}{2}_{2,1}} \lesssim \|f \|_{\dot{H}^\frac{1}{2}}+\| f \|_{\dot{H}^2},
\end{equation*}
which ends the proof of \ref{Pr:3}.
\end{proof}
The next lemma will be used to obtain the bound $L^{\infty}_{T}(\dot{H}^{\frac{3}{2}})$ of the small solution $v_h.$

\begin{lem}\label{lemma law of product1} For a regular enough divergence free vector fields $u,v$ and $\beta\in [1,\infty]$, the following estimate holds.
\begin{equation}\label{Eq:T}
\|\Div (u \otimes v )\|_{\widetilde{L}^{\beta}_{T}(\dot{B}^{0}_{2,\infty})}\lesssim \|u\|_{\widetilde{L}^{\infty}_{T} (\dot{B}^{\frac{1}{2}}_{2,\infty}) } \|v\|_{\widetilde{L}^{\beta}_{T}(\dot{B}^{2}_{2,\infty})} +\|v\|_{\widetilde{L}^{\infty}_{T} (\dot{B}^{\frac{1}{2}}_{2,\infty}) } \|u\|_{ \widetilde{L}^{\beta}_T(\dot{B}^{2}_{2,\infty})}.
\end{equation}
\end{lem}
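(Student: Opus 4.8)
The plan is to reduce \eqref{Eq:T} to a product law in homogeneous Besov spaces and then run Bony's decomposition block by block, handling the time variable by H\"older's inequality. First I would remove the divergence: each component of $\Div(u\otimes v)$ is a finite sum of terms of the form $\partial_i(u_i v_j)$, so Bernstein's inequality \eqref{Eq:0000} gives $\|\dot\Delta_q \Div(u\otimes v)\|_{L^2}\lesssim 2^{q}\|\dot\Delta_q(u\otimes v)\|_{L^2}$ for every $q\in\ZZ$, hence
\begin{equation*}
\|\Div(u\otimes v)\|_{\widetilde{L}^{\beta}_{T}(\dot{B}^{0}_{2,\infty})}\lesssim \|u\otimes v\|_{\widetilde{L}^{\beta}_{T}(\dot{B}^{1}_{2,\infty})}.
\end{equation*}
It is then enough to bound $\|u\otimes v\|_{\widetilde{L}^{\beta}_{T}(\dot{B}^{1}_{2,\infty})}$ by the right-hand side of \eqref{Eq:T}; this is the time-dependent, $\ell^\infty$-indexed form of the product law $\dot{B}^{\frac12}_{2,\infty}\cdot\dot{B}^{2}_{2,\infty}\hookrightarrow\dot{B}^{1}_{2,\infty}$, which is admissible because $\tfrac12+2-\tfrac32=1$ and $\tfrac12<\tfrac32$.

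Next I would write $u\otimes v=T_uv+T_vu+\mathscr{R}(u,v)$ and treat the three pieces exactly along the lines of the proof of \eqref{Eq:N10} in Proposition \ref{Pr:3}. For the paraproduct $T_uv=\sum_k\dot S_{k-1}u\,\dot\Delta_kv$ only finitely many indices $k$ with $k\simeq q$ feed into $\dot\Delta_q$, and I would combine $\|\dot S_{k-1}u\|_{L^\infty}\lesssim 2^{k}\|u\|_{\dot{B}^{\frac12}_{2,\infty}}$ (Bernstein together with $\dot{B}^{\frac12}_{2,\infty}\hookrightarrow\dot{B}^{-1}_{\infty,\infty}$) with $\|\dot\Delta_kv\|_{L^2}\le 2^{-2k}\|v\|_{\dot{B}^{2}_{2,\infty}}$; after multiplying by $2^q$ the powers of two collapse into a harmless finite sum of factors $2^{q-k}$ with $|q-k|$ bounded, and putting the low-frequency factor in $L^\infty_T$ and the high-frequency one in $L^\beta_T$ (H\"older in time with exponents $(\infty,\beta)$) yields precisely $\|u\|_{\widetilde{L}^{\infty}_{T}(\dot{B}^{\frac12}_{2,\infty})}\|v\|_{\widetilde{L}^{\beta}_{T}(\dot{B}^{2}_{2,\infty})}$, the first term of \eqref{Eq:T}. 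Exchanging the roles of $u$ and $v$ in $T_vu$ gives the second term. For the remainder $\mathscr{R}(u,v)=\sum_k\dot\Delta_ku\,\widetilde{\dot\Delta}_kv$ only the indices $k\gtrsim q$ contribute to $\dot\Delta_q$, and using Bernstein for $\dot\Delta_q\colon L^1\to L^2$ (which gains $2^{\frac32 q}$ in $\RR^3$) I would estimate
\begin{equation*}
\|\dot\Delta_q\mathscr{R}(u,v)\|_{L^2}\lesssim 2^{\frac32 q}\sum_{k\gtrsim q}\|\dot\Delta_ku\|_{L^2}\|\widetilde{\dot\Delta}_kv\|_{L^2}\lesssim 2^{\frac32 q}\sum_{k\gtrsim q}2^{-\frac52 k}\|u\|_{\dot{B}^{\frac12}_{2,\infty}}\|v\|_{\dot{B}^{2}_{2,\infty}},
\end{equation*}
and summing the geometric series (legitimate since $\tfrac12+2>0$) produces a net factor $2^{-q}$, so that after multiplying by $2^q$ and applying H\"older in time once more the remainder is again controlled by the first term of \eqref{Eq:T}. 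Collecting the three contributions and taking the supremum over $q$ finishes the argument.

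I do not expect a genuine analytic obstacle; the whole proof is a bookkeeping exercise in Bernstein inequalities and geometric summation, and the regularity balance $\tfrac12+2-\tfrac32=1>0$ leaves no endpoint subtlety. The two points that do need attention are: first, extracting the derivative from $\Div$ before running Bony so that one works at the natural target index $1$ and all three dyadic sums close cleanly; and second, the time bookkeeping, namely that in each of the three pieces one factor naturally sits in $\widetilde{L}^{\infty}_{T}(\dot{B}^{\frac12}_{2,\infty})$ and the other in $\widetilde{L}^{\beta}_{T}(\dot{B}^{2}_{2,\infty})$, so that H\"older in time with the split $\tfrac1\beta=\tfrac1\infty+\tfrac1\beta$ keeps the estimate valid for \emph{every} $\beta\in[1,\infty]$ — which is exactly what is needed afterwards to upgrade the small solution $v_h$ to $L^\infty_T(\dot{H}^{\frac32})$ under the hypothesis $f\in L^\beta_T(L^2)$ with $\beta>4$.
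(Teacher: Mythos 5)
Your proof is correct and follows essentially the same route as the paper: extract the derivative by Bernstein to reduce to a $\dot B^{1}_{2,\infty}$ product estimate, run Bony's decomposition, estimate the two paraproducts symmetrically by placing the low-frequency factor in $\dot B^{1/2}_{2,\infty}\hookrightarrow\dot B^{-1}_{\infty,\infty}$ and the high-frequency one in $\dot B^{2}_{2,\infty}$ with the $(\infty,\beta)$ H\"older split in time, and close the remainder by a geometric sum. The only cosmetic difference is in the remainder term, where you use the $L^{1}\to L^{2}$ Bernstein gain of $2^{3q/2}$ whereas the paper puts one factor in $L^{\infty}$; both computations yield the same net factor $2^{-q}$.
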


\begin{proof} The proof is classical and relies on continuity properties of Bony's decomposition. To make presentation more reputable, we omit the tensor product and estimate the formal product $\partial_j (u v)$. Hence we need to prove 

\begin{equation*}
\|uv\|_{\widetilde{L}^{\beta}_T(\dot{B}^{1}_{2,\infty})}\lesssim \|u\|_{\widetilde{L}^\infty_T (\dot{B}^{\frac{1}{2}}_{2,\infty}) } \|v\|_{ \widetilde{L}^{\beta}_T(\dot{B}^{2}_{2,\infty})} +\|v\|_{\widetilde{L}^\infty_T (\dot{B}^{{1}/{2}}_{2,\infty}) } \|u\|_{ \widetilde{L}^{\beta}_T(\dot{B}^{2}_{2,\infty})}. 
\end{equation*}
Thanks of Bony's decomposition \eqref{Decomp}, we write
\begin{equation*}
  uv=T_{u }  v+T_{v}u +  R(u , v),
\end{equation*}
By changing $u$ and $v$'s positions, the first two terms in the r.h.s can be dealt with similarly as follows
\begin{eqnarray*}
\| \Delta_k T_{u } v \|_{L^\beta_T(L^2)}&\lesssim & \|S_{k-1} u\|_{L^{\infty}_{T}(L^\infty)} \|\Delta_k v \|_{L^\beta_T(L^2)}\\
&\lesssim & 2^{-k}\| u \|_{\widetilde{L}^\infty_T(\dot{B}^{-1}_{\infty,\infty})}\|  v \|_{\widetilde{L}^\beta_T(\dot{B}^{2}_{2,\infty})} 
\end{eqnarray*}
By Sobolev embedding we have
\begin{equation*}
\|T_{u }  v \|_{\widetilde{L}^{\beta}_T(\dot{B}^{1}_{2,\infty})}\lesssim \|u\|_{\widetilde{L}^\infty_T (\dot{B}^{\frac{1}{2}}_{2,\infty}) } \|v\|_{ \widetilde{L}^{\beta}_T(\dot{B}^{2}_{2,\infty})}.
\end{equation*}
For the reminder term, the same analysis gives
 \begin{eqnarray*}
 \|\Delta_k R(u,v) \|_{L^{\beta}_{T}(L^2)} &\lesssim& \sum_{m>k-N_0} \|\widetilde{\Delta}_m u\|_{L^\infty_T(L^\infty)} \| \Delta_m v\|_{L^\beta_T(L^2)}\\
 &\lesssim& 2^{-k} \sum_{m>k-N_0} 2^{k-m} \|u\|_{\widetilde{L}^\infty_T (\dot{B}^{{1}/{2}}_{2,\infty}) } \|v\|_{ \widetilde{L}^{\beta}_T(\dot{B}^{2}_{2,\infty})}
 \end{eqnarray*}
 thus, by Holder inequality we obtain
\begin{equation*}
\|\Delta_k R(u,v) \|_{L^\beta_T(L^2)}  \lesssim 2^{-k}  \|u\|_{\widetilde{L}^\infty_T (\dot{B}^{{1}/{2}}_{2,\infty}) } \|v\|_{ \widetilde{L}^{\beta}_T(\dot{B}^{2}_{2,\infty})}.
\end{equation*}
Lemma \ref{lemma law of product1} is then proved.
\end{proof}

\subsection{About Navier-Stokes system}\label{First-result}

\begin{proof}[Proof of Proposition \ref{Pr:2}]  Let us begin by establishing the $E_T(\dot{H}^\frac{1}{2})$ bound for $v_h$. Doing so, we take the inner product in $\dot{H}^\frac{1}{2}$ with $v_h$ solution of \eqref{NS-h} to obtain
\begin{equation*}
\frac12\frac{d}{dt} \|v_h(t)\|_{\dot{H}^\frac{1}{2}}^2 +\|\nabla v_h(t)\|_{\dot{H}^\frac{1}{2}}^2 \leq \big| \big\langle v_h\cdot \nabla v_h| v_h \big\rangle_{\dot{\frac{1}{2}}} \big| + \|f_h\|_{\dot{H}^{-\frac{1}{2}}} \|\nabla v_h\|_{\dot{H}^\frac{1}{2}}.
\end{equation*}
 By virtue of \eqref{Eq:7} in Proposition \ref{Pr:3} and Young's inequality, we infer that
\begin{equation*}
\frac12\frac{d}{dt}\|v_h(t)\|_{\dot{H}^\frac{1}{2}}^2 +\|\nabla v_h(t)\|_{\dot{H}^\frac{1}{2}}^2 \leq  C\|v_h(t)\|_{\dot{H}^\frac{1}{2}}\|\nabla v_h(t)\|_{\dot{H}^\frac{1}{2}}^2 + C\|f_h(t)\|_{\dot{H}^{-\frac{1}{2}}}^2+ \frac{1}{4} \|\nabla v_h(t)\|_{\dot{H}^\frac{1}{2}}^2.
\end{equation*} 
Now, let
\begin{equation*}
\widetilde{T}= \sup\Big\{\tau>0: \;\|v_h(\tau)\|_{\dot{H}^\frac{1}{2}} < 1/4C\Big\}.
\end{equation*}
The definition of $f_h$ provides that for all $t< \widetilde{T}$,
\begin{equation*}
\|v_h(t)\|_{\dot{H}^\frac{1}{2}}^2 +\|\nabla v_h(t)\|_{L^2_t(\dot{H}^\frac{1}{2})}^2  \leq C \EE_{N_0,-\frac{1}{2}}^2.
\end{equation*}
We choose $N_0$ large enough such that $ C\varepsilon_{N_0,-\frac{1}{2}}^2 <{1}/{(8C)^2}$, hence for all $t<\widetilde{T}$
\begin{equation}\label{contraction.estimate}
\|v_h(t)\|_{\dot{H}^\frac{1}{2}} < {1}/{8C}.  
\end{equation}
On one hand, we assume that $\widetilde{T}< \infty$, so the definition of $\widetilde{T}$ and solution's continuity yield 
\begin{equation*}
\|v_h(\widetilde{T})\|_{\dot{H}^\frac{1}{2}} = {1}/{4C}.
\end{equation*}
On the other hand, letting $t$  goes to $\widetilde{T}$ in \eqref{contraction.estimate}, we end up with
\begin{equation*}
1/4C=  \|v_h(\widetilde{T})\|_{\dot{H}^\frac{1}{2}} \leq 1/8C.
\end{equation*}
This contradicts the fact that $\widetilde{T}$ is finite, and thus $ \|v_h(t)\|_{\dot{H}^\frac{1}{2}}$ remains bounded for all $t>0$ and can not blow-up in finite time. In particular, one concludes that for all $T>0$
\begin{equation}\label{Eq:9}
\|v_h\|_{E_T(\dot{H}^\frac{1}{2})} \lesssim \EE.
\end{equation}
Let us now prove the same estimate for $\|v_h\|_{E_T(\dot{H}^1)}$. Similarly, as above, by virtue of \eqref{Eq:7} stated in Proposition \ref{Pr:3}, we write 
\begin{eqnarray*}
\frac12\frac{d}{dt}\|v_h(t)\|_{\dot{H}^1}^2+\|\nabla v_h(t)\|_{\dot{H}^1}^2 &\le &  C\|v_h(t)\|_{\dot{H}^\frac{1}{2}}\|\nabla v_h(t)\|_{\dot{H}^1}^2+ C\|f_h(t)\|_{L^2}^2+ \frac{1}{4} \|\nabla v_h(t)\|_{\dot{H}^1}^2 \\
&\le& C\|f_h(t)\|_{L^2}^2+ \frac{1}{2}\|\nabla v_h(t)\|_{\dot{H}^1}^2.
\end{eqnarray*}
Hence 
\begin{equation}\label{Eq:9.5}
\|v_h\|_{E_T(\dot{H}^1)} \lesssim \EE_{N_0,0} \lesssim \EE.
\end{equation}
Grouping and \eqref{Eq:9} and \eqref{Eq:9.5}, we end up with
\begin{equation}\label{Eq:10}
\|v_h\|_{E_T(\dot{H}^{\frac12})\cap E_T(\dot{H}^1)} \lesssim \EE.
\end{equation}
For the uniqueness topic, it is well-known to hold in these kinds of spaces, hence the details are left to the reader. The proof of  Proposition \ref{Pr:2} is now achieved.
\end{proof}


\begin{proof}[Proof of Proposition \ref{Pro:2}] The perception of the proof will be done in the following way. First, we start by establishing that $v_h\in L_{T}^\infty(B^{2-\frac{2}{\beta}}_{2,\infty})$. Thereafter, remarking that for $\beta>4$ then $2-\frac{2}{\beta}>\frac{3}{2}$, so we conclude by making use of the interpolation argument that
\begin{equation*}
L_{T}^\infty(H^1) \cap L_{T}^\infty(B^{2-\frac{2}{\beta}}_{2,\infty} )\hookrightarrow L_{T}^\infty(B^\frac{3}{2}_{2,1})\hookrightarrow L_{T}^\infty(H^\frac{3}{2} ).
\end{equation*}
So, to prove that $v_h$ relies in $ L_{T}^\infty(B^{2-\frac{2}{\lambda}}_{2,\infty})$, we first write down the equivalent Duhamel's formula associated to system \eqref{NS-h}
\begin{equation*}
v_h(t,\cdot) = -\int_0^t \mathbb{S}(t-s){\bf P}\Div(v_h\otimes v_h)(\tau,\cdot) d\tau - \int_0^t \mathbb{S}(t-\tau){\bf P} f_h(\tau,\cdot) d\tau,
\end{equation*}
where $\mathbb{S}$ represents the heat semi-group, defined by $\mathbb{S}(t) \triangleq e^{t \Delta}$.

\hspace{0.5cm}Next, we apply the Lemma \ref{Lem:2} for $a=\kappa_1, \kappa_2=2$ and $p=2$ to write 
\begin{equation}\label{heat-kernel estimate}
 \Big\| \int_{0}^{\infty} {\bf 1}_{\tau\leq \cdot}\mathbb{S}(\cdot- \tau) \Delta_q g(s)\Big\|_{L_{T}^{a}(L^2)} \lesssim 2^{2j(-1+\frac{1}{\beta}- \frac{1}{a})} \| \Delta_q g\|_{L^{\beta}_{T}(L^2) }, \quad \mbox{ for all } a\in [\beta,\infty].
\end{equation}
Accordingly, the fixed-point method for the above Duhamel's formula can be done in the following functional space.
\begin{equation*}
\mathcal{X}_T\triangleq \big\{ g\in C((0,T);L^2): \; \|g\|_{\mathcal{X}_T}<\infty \big\},
\end{equation*}
equipped with the norm
\begin{equation*}
\|g\|_{\mathcal{X}_T} \triangleq \|g\|_{\widetilde{L}^\infty_T(\dot{B}^{2-\frac{2}{\beta}}_{2,\infty})}+ \|g\|_{\widetilde{L}^{\beta}_T(\dot{B}^{2}_{2,\infty})}.
\end{equation*}
The external term can be estimated by applying \eqref{heat-kernel estimate}, while the bilinear form can be carried out by employing \eqref{Eq:T} in Lemma \ref{lemma law of product1}, we conclude that
\begin{equation}\label{eq1}
\|v_h\|_{\mathcal{X}_T} \leq C \|v_h\|_{ \widetilde{L}^{\infty}_T (\dot{B}^{\frac{1}{2}}_{2,\infty}) } \|v_h\|_{\mathcal{X}_T} + C \|f_h\|_{\widetilde{L}^{\beta}_T(\dot{B}^{0}_{2,\infty}) }. 
\end{equation}
The fact that $L^2=\dot B^{0}_{2,2}\hookrightarrow \dot{B}^{0}_{2,\infty}$, see Lemma \ref{Lem:1} and the first embedding of \eqref{embeddings} leading to
\begin{equation*}
\|f_h\|_{\widetilde{L}^{\beta}_T(\dot{B}^{0}_{2,\infty}) } \leq \|f_h\|_{L^\beta_T(L^2)}.
\end{equation*}
Similarly, on account of Proposition \ref{Pr:2}, one may write
\begin{equation*}
\|v_h\|_{ \widetilde{L}^\infty_T (\dot{B}^{\frac{1}{2}}_{2,\infty}) } \leq \|v_h\|_{L^\infty_T (\dot{H}^{\frac{1}{2}} ) } \leq \varepsilon/C,
\end{equation*}
plug these into \eqref{eq1} gives
\begin{equation*}
\|v_h\|_{\mathcal{X}_T} \lesssim \|f_h\|_{L^{\beta}_T(L^2)}.
\end{equation*}
By definition of $f_h$, for $N_0$ large enough we obtain the desired estimate.
\end{proof}

\hspace{0.5cm}Before to lay down the proof of Theorem \ref{Th:2} we begin by proving the following result.

\begin{Theo}\label{thm.NS-l with regular initial data} Let $v_0$ be an axisymmetric divergence free vector field in $H^2(\RR^3)$ and $v_h$ the unique global solution of \eqref{NS-h} given by Propositions \ref{Pr:2} and \ref{Pro:2}. Then \eqref{NS-l} has a unique solution in $E_T(H^1)$ for all $T>0$.
\end{Theo}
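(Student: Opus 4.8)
The plan is a continuation argument for \eqref{NS-l}. Observe first that $v_h$ is a \emph{fixed} datum lying in $E_T(\dot H^{1/2})\cap E_T(\dot H^1)\cap L^\infty_T(\dot H^{3/2})$ by Propositions \ref{Pr:2}--\ref{Pro:2}, and that $f_\ell$, being spectrally supported in $\{|\xi|\le N_0\}$, belongs to $L^\beta_T(H^s)$ for every $s\ge 0$; hence the extra term $F_{v_h}(v_\ell)=-v_h\cdot\nabla v_\ell-v_\ell\cdot\nabla v_h$ is a lower-order linear perturbation. A standard Friedrichs scheme (frequency truncation, Leray projection, compactness) then produces, for $v_0\in H^2$, a maximal solution on $[0,T^*)$ with $v_\ell\in E_t(H^2)$ for $t<T^*$ and the blow-up alternative $T^*<\infty\Rightarrow\|v_\ell\|_{E_t(H^1)}\to\infty$, so everything reduces to an a priori bound of $\|v_\ell\|_{E_T(H^1)}$ on finite intervals. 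The $L^2$ bound is obtained by pairing \eqref{NS-l} with $v_\ell$: incompressibility of $v_\ell$ and $v_h$ kills $\langle v_\ell\cdot\nabla v_\ell\,|\,v_\ell\rangle$ and $\langle v_h\cdot\nabla v_\ell\,|\,v_\ell\rangle$, while $\langle v_\ell\cdot\nabla v_h\,|\,v_\ell\rangle=-\langle v_\ell\cdot\nabla v_\ell\,|\,v_h\rangle$ is absorbed through $\|v_\ell\|_{L^4}^2\lesssim\|v_\ell\|_{L^2}^{1/2}\|\nabla v_\ell\|_{L^2}^{3/2}$ and Young, leaving $C\|v_\ell\|_{L^2}^2\|\nabla v_h\|_{L^2}^4$; since $\nabla v_h\in L^\infty_T(L^2)$ and $f_\ell\in L^2_T(L^2)$, Gr\"onwall's lemma gives $v_\ell\in E_T(L^2)$.

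The core of the proof is the $\dot H^1$ bound, and here I would use the axisymmetric structure. Since $v_0$, $f_\ell$, $v_h$ are axisymmetric without swirl, so is $v_\ell$, and $v_0\in H^2$ yields $\omega_{\ell,0},\,\omega_{\ell,0}/r\in L^2$ (and $\in L^\infty$). The vorticity $\omega_\ell=\omega_{\ell,\theta}\vec e_\theta$ of \eqref{NS-l} solves
\begin{equation*}
\partial_t\omega_{\ell,\theta}+v_\ell\cdot\nabla\omega_{\ell,\theta}-\Big(\Delta-\tfrac1{r^2}\Big)\omega_{\ell,\theta}=\tfrac{v_\ell^r}{r}\,\omega_{\ell,\theta}+(\Curl f_\ell)_\theta+\big(\Curl F_{v_h}(v_\ell)\big)_\theta ,
\end{equation*}
and $\zeta_\ell:=\omega_{\ell,\theta}/r$ satisfies a transport--diffusion equation driven by the good-sign operator $\Delta+\tfrac2r\partial_r$ with forcing $\tfrac1r(\Curl f_\ell)_\theta$ plus the terms issued from $F_{v_h}(v_\ell)$. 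I would first control $\zeta_\ell$ in $L^\infty_T(L^2)\cap L^2_T(\dot H^1)$: the transport term drops by incompressibility, the forcing $\tfrac1r(\Curl f_\ell)_\theta$ is estimated by rewriting it through $\Curl\,\Curl f_\ell=\nabla\Div f_\ell-\Delta f_\ell$ and Bernstein's inequality (so it is bounded by $N_0^2\|f\|_{L^2}\in L^\beta_T$, using $\beta>4$), and the $F_{v_h}$ contributions are absorbed into $\|\nabla\zeta_\ell\|_{L^2}$ thanks to the smallness of $v_h$ in $\dot H^{1/2}$ and $\dot H^{3/2}$ together with Proposition \ref{Pr:3} and the velocity--vorticity equivalences \eqref{Eq:1-eqv}--\eqref{Eq:3-eqv}. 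Feeding the resulting control of $\zeta_\ell$ (hence of $\tfrac{v_\ell^r}{r}$ in suitable mixed Lebesgue norms) and of $v_h$ back into the $\omega_{\ell,\theta}$ equation, the stretching term $\tfrac{v_\ell^r}{r}\omega_{\ell,\theta}$ and the cross-terms $\big(\Curl F_{v_h}(v_\ell)\big)_\theta$ are handled, again using incompressibility and the smallness of $v_h$; a Gr\"onwall argument then gives $\omega_{\ell,\theta}\in L^\infty_T(L^2)\cap L^2_T(\dot H^1)$, whence by \eqref{Eq:1-eqv}--\eqref{Eq:2-eqv} we obtain $v_\ell\in E_T(H^1)$ on every finite interval. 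The blow-up alternative then forces $T^*=\infty$.

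Uniqueness in $E_T(H^1)$ is routine: the difference $w$ of two solutions solves a linear equation forced by $F_{v_h}(w)$ and bilinear terms in $(v_\ell^{(i)},w)$, and an $L^2$ energy estimate using $\nabla v_\ell^{(i)}\in L^2_T(L^2)$, $\nabla v_h\in L^\infty_T(L^2)$, Gagliardo--Nirenberg and Gr\"onwall forces $w\equiv 0$. The main obstacle — and the reason for the whole splitting $v=v_\ell+v_h$ — is the vorticity step: the forcing $\tfrac1r(\Curl f)_\theta$ that would appear for the \emph{full} solution costs two derivatives on $f$ and is hopeless for $f\in L^\beta_T(L^2)$; keeping on the $v_\ell$-side only the spectrally localized $f_\ell$, for which Bernstein restores the lost derivatives at the price of a harmless power of $N_0$, while dumping the rough high frequencies of $f$ into the small solution $v_h$ (handled purely by the critical scaling theory, with no vorticity estimate) and controlling the coupling $F_{v_h}(v_\ell)$ through the smallness of $v_h$ in $\dot H^{1/2}$ and $\dot H^{3/2}$, is exactly what makes the estimate closeable.
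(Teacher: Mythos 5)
Your proposal is correct and follows essentially the same route as the paper: local solvability in $E_t(H^s)$ up to $s=2$ with a continuation criterion, then a global $H^1$ bound obtained from coupled $L^2$ energy estimates on $\omega_{\ell,\theta}$ (the paper's $\Gamma_\ell$) and $\omega_{\ell,\theta}/r$ (the paper's $\alpha$), where the forcing $\tfrac1r(\Curl f_\ell)_\theta$ is paid for by the spectral localization of $f_\ell$ (the paper uses Hardy plus $\|f_\ell\|_{H^2}\lesssim N_0^2\|f\|_{L^2}$, matching your Bernstein argument) and the cross terms with $v_h$ are absorbed via the smallness of $\Gamma_h$ in $L^3$ coming from $v_h\in L^\infty_T(H^{3/2})$. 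The only cosmetic difference is the order in which the two vorticity estimates are closed (you bound $\zeta_\ell$ first and feed it into $\omega_{\ell,\theta}$; the paper runs both and closes the loop by an absorption argument on the combined quantity $\Pi(T_0)$), which does not change the substance.
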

The proof of Theorem \ref{thm.NS-l with regular initial data} will be orchestrate, first by showing that \eqref{NS-l} has a unique local solution in $E_{T^\star}(H^s)$, for all $s\in[\frac{1}{2}, 2]$, with $T^\star$ can be considred as the maximal lifespan in $H^1$. Second, we explore the axisymmetric structure to control the $H^1-$norm over $(0,T^\star]$, which allows to extend the solution beyond $T^\star$. This  contradicts the fact that $T^\star$ is maximal.

\hspace{0.5cm}We start by proving the following proposition.
\begin{prop}\label{prop.local.NS_l} Let $v_0$ be a divergence free vector field in $H^2(\RR^3)$ and $v_h$ the unique global solution of \eqref{NS-h} oft constructed in Propositions \ref{Pr:2} and \ref{Pro:2}. Then, there exists a maximal lifespan $T^\star>0$ such that \eqref{NS-l} has a unique solution in $E_{T^\star}(\dot{H}^s)$ for all $s\in[\frac{1}{2},2]$.
\end{prop}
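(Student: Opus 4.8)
The plan is to build the solution of \eqref{NS-l} by a Friedrichs regularization, to close \emph{uniform} energy estimates in $\dot H^s$ at the three pivotal exponents $s\in\{\tfrac12,1,2\}$ with the help of Proposition \ref{Pr:3} and of the smallness of $v_h$ provided by Propositions \ref{Pr:2} and \ref{Pro:2}, to pass to the limit by compactness, and to recover the full range $s\in[\tfrac12,2]$ by interpolation; uniqueness will come from a stability estimate on the difference of two solutions. Concretely, for $n\in\NN$ let $J_n$ be the Fourier cut-off onto $\{|\xi|\le n\}$ and let $v_\ell^n$ solve, on the finite dimensional divergence free subspace $J_nL^2$,
\begin{equation*}
\partial_t v_\ell^n=J_n\Big(\Delta v_\ell^n-{\bf P}\Div(v_\ell^n\otimes v_\ell^n)+{\bf P}f_\ell+{\bf P}F_{v_h}(v_\ell^n)\Big),\qquad v_\ell^n|_{t=0}=J_nv_0.
\end{equation*}
Since $t\mapsto v_h(t)$ is measurable with values in $\dot H^{\frac12}$, the right-hand side is Carath\'eodory and locally Lipschitz in $v_\ell^n$, so a unique maximal solution $v_\ell^n$ exists; the bounds below make its lifespan uniform in $n$. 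Recall also that $v_0\in H^2(\RR^3)\hookrightarrow\dot H^s(\RR^3)$ for every $s\in[0,2]$.

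Testing the equation against $v_\ell^n$ in $\dot H^s$ and using $\Div v_\ell^n=0$, $J_n^2=J_n$ gives, for $s\in[\tfrac12,2]$,
\begin{equation*}
\tfrac12\tfrac{d}{dt}\|v_\ell^n\|_{\dot H^s}^2+\|\nabla v_\ell^n\|_{\dot H^s}^2\le\big|\langle v_\ell^n\cdot\nabla v_\ell^n\,|\,v_\ell^n\rangle_{\dot s}\big|+\big|\langle F_{v_h}(v_\ell^n)\,|\,v_\ell^n\rangle_{\dot s}\big|+\|f_\ell\|_{\dot H^{s-1}}\|\nabla v_\ell^n\|_{\dot H^s}.
\end{equation*}
The forcing is harmless because $f_\ell$ is spectrally localized in $\mathscr B(0,N_0)$, so $\|f_\ell\|_{\dot H^{s-1}}\lesssim(1+N_0)^{(s-1)_+}\|f\|_{L^2}\in L^2_{\loc}$ (recall $\beta>4$), and Young's inequality absorbs a fraction of $\|\nabla v_\ell^n\|_{\dot H^s}^2$. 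For the linear perturbation write $F_{v_h}(v_\ell^n)=-v_h\cdot\nabla v_\ell^n-v_\ell^n\cdot\nabla v_h$: when $s\in[\tfrac12,\tfrac32)$, \eqref{Eq:7} bounds the first piece by $C\|v_h\|_{\dot H^{\frac12}}\|\nabla v_\ell^n\|_{\dot H^s}^2$, which is $\le\tfrac1{100}\|\nabla v_\ell^n\|_{\dot H^s}^2$ once $N_0$ is large (Propositions \ref{Pr:2}--\ref{Pro:2} give $\|v_h\|_{L^\infty_T\dot H^{\frac12}}\lesssim\EE$), while \eqref{Eq:8} bounds the second by $\|v_h\|_{\dot H^{\frac32}}^2\|v_\ell^n\|_{\dot H^s}^2+\tfrac1{100}\|\nabla v_\ell^n\|_{\dot H^s}^2$ with $\|v_h\|_{\dot H^{\frac32}}^2\in L^1_{\loc}$ since $v_h\in E_T(\dot H^1)$; when $s=2$ the same two pieces are treated with \eqref{Eq:11}--\eqref{Eq:12} (taking $c=v_\ell^n$, so that $\|v_\ell^n\|_{\dot H^3}=\|\nabla v_\ell^n\|_{\dot H^2}$), the dangerous term $\|v_h\|_{\dot H^1}\|\nabla v_\ell^n\|_{\dot H^2}^2$ being absorbed because $\|v_h\|_{L^\infty_T\dot H^1}\lesssim\EE$ and the rest being, after Young's inequality, of Gronwall type (integrable-in-time coefficients, namely $\|v_h\|_{\dot H^1}^2$ and $\|v_h\|_{\dot H^2}^2$, multiplying $\|v_\ell^n\|_{\dot H^{1/2}}^2+\|v_\ell^n\|_{\dot H^1}^2+\|v_\ell^n\|_{\dot H^2}^2$, plus integrable-in-time sources). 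The self-interaction is handled by \eqref{Eq:N9} when $s\in[\tfrac12,\tfrac32)$, and when $s=2$ by writing $v_\ell^n\cdot\nabla v_\ell^n=\Div(v_\ell^n\otimes v_\ell^n)$, integrating by parts, and using the three-dimensional product rule together with $\|v_\ell^n\|_{L^\infty}\lesssim\|v_\ell^n\|_{\dot H^1}^{1/2}\|v_\ell^n\|_{\dot H^2}^{1/2}$, which yields $C\|v_\ell^n\|_{\dot H^1}\|v_\ell^n\|_{\dot H^2}\cdot\|v_\ell^n\|_{\dot H^2}^2+\tfrac1{100}\|\nabla v_\ell^n\|_{\dot H^2}^2$. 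Putting these together, the $s=1$ balance is of the form $y'\lesssim y^3+g(t)y+h(t)$ with $y=\|v_\ell^n\|_{\dot H^1}^2$ and $g,h\in L^1_{\loc}$, which gives a uniform lifespan $T_0>0$ (depending only on $\|v_0\|_{H^2}$, $\|f\|_{L^\beta_{T_0}(L^2)}$ and $N_0$) and a uniform bound for $v_\ell^n$ in $E_{T_0}(\dot H^1)$; feeding this bound into the $s=\tfrac12$ and $s=2$ balances turns them into \emph{linear} Gronwall inequalities $y'\le a(t)y+b(t)$ with $a,b\in L^1_{\loc}$, hence $v_\ell^n$ is bounded, uniformly in $n$, in $E_T(\dot H^{\frac12})\cap E_T(\dot H^1)\cap E_T(\dot H^2)$ on every compact interval on which the $\dot H^1$ bound is available.

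With these bounds and $\partial_t v_\ell^n$ bounded in $L^2_{T_0}(\dot H^{-1})$, Aubin--Lions yields a subsequence converging strongly, locally in space, in $L^2_{T_0}(L^2)$, which is enough to pass to the limit in $v_\ell^n\otimes v_\ell^n$ and in $F_{v_h}(v_\ell^n)$; the limit $v_\ell$ solves \eqref{NS-l} and inherits all the bounds. Uniqueness in $E_T(\dot H^1)$ is classical: the difference $w$ of two solutions solves a linear convection--diffusion equation with drift $v_\ell^{(1)}+v_h$ and source $-w\cdot\nabla(v_\ell^{(2)}+v_h)$, and an $L^2$ (or $\dot H^{\frac12}$) energy estimate with Gronwall forces $w\equiv0$. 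Defining $T^\star$ as the supremum of the times $T$ for which \eqref{NS-l} possesses a solution in $E_T(\dot H^1)$, the linear Gronwall inequalities above keep $v_\ell$ bounded in $E_T(\dot H^{\frac12})$ and $E_T(\dot H^2)$ on every compact subinterval of $[0,T^\star)$, and interpolating between these two endpoints yields $v_\ell\in E_T(\dot H^s)$ for all $s\in[\tfrac12,2]$ and all $T<T^\star$, which is the claim.

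The main obstacle is the top exponent $s=2$: there the cancellation $\langle u\cdot\nabla w\,|\,w\rangle=0$ no longer frees a full extra derivative, so the estimate cannot be closed by mere absorption of $\|\nabla v_\ell^n\|_{\dot H^2}^2$. It has to go through the product/interpolation estimates \eqref{Eq:11}--\eqref{Eq:12} of Proposition \ref{Pr:3} (and the $3$D Sobolev algebra property), at the cost of a super-quadratic term $\|v_\ell^n\|_{\dot H^1}\|v_\ell^n\|_{\dot H^2}\cdot\|v_\ell^n\|_{\dot H^2}^2$ in the energy balance; this is exactly why only a local (but full $\dot H^1$-lifespan) statement can be obtained. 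A secondary, bookkeeping difficulty is to keep track of which norms of the small solution $v_h$ are genuinely \emph{small} ($\|v_h\|_{L^\infty_T\dot H^{\frac12}}$ and $\|v_h\|_{L^\infty_T\dot H^1}$, from Propositions \ref{Pr:2}--\ref{Pro:2}) and which are only \emph{locally integrable in time} ($\|v_h\|_{\dot H^{\frac32}}^2$ and $\|v_h\|_{\dot H^2}^2$, from $v_h\in E_T(\dot H^1)$), so that every perturbative term ends up on the correct side of the inequality.
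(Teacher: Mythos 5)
Your argument is correct and reaches the stated conclusion, but it takes a genuinely different route from the paper at the decisive step. The paper splits $v_\ell=e^{t\Delta}v_0+w^\ell$ and works with the zero-data remainder $w^\ell$: the local time $T^\star$ is extracted at the \emph{critical} level $\dot H^{\frac12}$, from the fact that the source $\delta(t)$ built on the free flow tends to $0$ as $t\to0^+$ (because $\|\nabla e^{t\Delta}v_0\|_{\dot H^{1/2}}\in L^2(\RR_+)$ and $\|e^{t\Delta}v_0\|_{\dot H^{1}}\in L^4(\RR_+)$), after which the exponents $s\in(\tfrac12,\tfrac32)$, $s=\tfrac32$ and $s=2$ are propagated on $[0,T^\star)$ by essentially linear Gronwall inequalities using \eqref{Eq:7}--\eqref{Eq:8}, \eqref{Eq:N10} and \eqref{Eq:11}--\eqref{Eq:12}. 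You instead keep the data inside the unknown and extract the lifespan from the \emph{subcritical} $\dot H^1$ balance $y'\lesssim y^3+g(t)y+h(t)$ furnished by \eqref{Eq:N9}; this is legitimate precisely because $v_0\in H^2\hookrightarrow \dot H^1$, and it does make the $s=\tfrac12$ and $s=2$ balances linear afterwards, so the remaining exponents follow by interpolation between the two endpoints (the paper needs the same interpolation for $s\in(\tfrac32,2)$, which it leaves implicit). Your treatment of the perturbative terms $F_{v_h}(v_\ell)$ coincides with the paper's --- smallness of $\|v_h\|_{L^\infty_T(\dot H^{1/2})}$ and $\|v_h\|_{L^\infty_T(\dot H^{1})}$ from Propositions \ref{Pr:2} and \ref{Pro:2} for the absorbable pieces, local integrability in time of $\|v_h\|_{\dot H^{3/2}}^2$ and $\|v_h\|_{\dot H^{2}}^2$ for the Gronwall coefficients --- and your Agmon-type bound for $\big\langle v_\ell\cdot\nabla v_\ell\,|\,v_\ell\big\rangle_{\dot 2}$ replaces \eqref{Eq:11}--\eqref{Eq:12} by the standard $L^\infty\cap\dot H^2$ product rule, which is fine. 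What each approach buys: yours is more elementary and avoids the continuity/bootstrap argument on $T_1$, $T_2$; the paper's first step survives for data that are merely in $\dot H^{\frac12}$, which is the form in which the scheme is reused in the proof of Theorem \ref{Th:2}, so your version would not be a drop-in replacement there.

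One bookkeeping slip: the bound $\|f_\ell\|_{\dot H^{s-1}}\lesssim (1+N_0)^{(s-1)_+}\|f\|_{L^2}$ fails for $s<1$, since a low-frequency cut-off does not control negative homogeneous Sobolev norms by the $L^2$ norm (concentrate $\widehat f$ near $\xi=0$). For $s=\tfrac12$ pair instead $\big\langle f_\ell\,|\,v\big\rangle_{\dot{\frac12}}\le\|f_\ell\|_{\dot H^{1/2}}\|v\|_{\dot H^{1/2}}\lesssim N_0^{1/2}\|f\|_{L^2}\|v\|_{\dot H^{1/2}}$, which only adds a harmless linear term to the Gronwall inequality; the term $\|f_\ell\|^2_{L^2_t(\dot H^{-1/2})}$ in the paper's own $\delta(t)$ calls for the same repair.
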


\begin{proof} We will only show the exsitence part and we skip the uniqueness, since this latter is well-known to hold in such spaces (the linear part with respect to $v_\ell$ can be dealt with as in the existence part). Define the free solution as  
\begin{equation*}
v^{l}(t,\cdot) \triangleq e^{t\Delta}v_0(\cdot)
\end{equation*}
and we look for a solution $v^\ell$ to \eqref{NS-l} of the form $v^\ell= v^l+ w^\ell$ with $w^\ell$ solving
  \begin{equation}\label{NS-ll}
\left\{\begin{array}{l}
 \partial_t w^\ell + w^\ell \cdot \nabla w^\ell - \Delta w^\ell + \nabla p^\ell=f_\ell + F_{v_h}(w^\ell) + F_{v^l}(w^\ell) - v^l\cdot \nabla v^l\\
\Div v =  0 \\
   w^\ell_{|t=0} = 0,\tag{NS$^\ell$}
\end{array}\right.
  \end{equation}
with $F_a(b) = - a\cdot \nabla b - b\cdot \nabla a$.

\hspace{0.5cm}Let us point out first that, for all $s\in[\frac{1}{2}, 2]$ and $p\in [2,\infty]$, we have
\begin{equation*}
\|v^l\|_{L^p_T(\dot{H}^{s+\frac{2}{p}} )} \lesssim \|v_0\|_{\dot{H}^s}.
\end{equation*}
We only restrict ourselves to outline some a priori estimates while the construction of a solution to \eqref{NS-ll}, one may employ the well-known Friedrichs's method based on formal calculation that we could prove it here. For more details about this method, we refer the reader to \cite{Bahouri-Chemin-Danchin}.

\hspace{0.5cm}We proceed in four steps.\\
$\bullet$ $ 1^{st}$ {\it Step:} $s=\frac{1}{2}$. Taking the scalar product in $\dot{H}^{\frac{1}{2}}$ for \eqref{NS-ll} with $w^\ell$. Thus, \eqref{Eq:6} in Proposition \ref{Pr:3} gives 
\begin{eqnarray}
\frac{1}{2} \|w^\ell(t)\|_{ \dot{H}^\frac{1}{2}}^2 +\|\nabla w^\ell \|_{ L^2_t(\dot{H}^\frac{1}{2})}^2  &\leq & \Big(C \|w^\ell\|_{ L^\infty_t(\dot{H}^\frac{1}{2})} + C\|v_h\|_{L^\infty_t(\dot{H}^\frac{1}{2})} + {1}/{4}\Big)\|\nabla w^\ell\|_{ L^2_t(\dot{H}^\frac{1}{2})}^2 \label{eq-w^l} \nonumber \\&&
+C\|w^\ell\|_{L^\infty_t(\dot{H}^\frac{1}{2})}^2 \|\nabla v_h\|_{L^2_t(\dot{H}^\frac{1}{2})}^2 +C \delta(t),
\end{eqnarray}
with 
\begin{equation*}
\delta(t) \triangleq  \|v^l\|_{L^\infty_t(\dot{H}^\frac{1}{2})}^2 \|\nabla v^l\|_{L^2_t(\dot{H}^\frac{1}{2})}^2+ \|v^l\|_{L^4_t(\dot{H}^1)}^4 \|w^\ell\|_{L^\infty_t(\dot{H}^\frac{1}{2})}^2 + \| f_\ell\|_{L^2_t(\dot{H}^{-\frac{1}{2}})}^2 .
\end{equation*}
According to Proposition \ref{Pr:2}, we may choose $N_0$ large enough so that 
\begin{equation*}
\|v_h\|_{L^\infty_t(\dot{H}^\frac{1}{2})}+\|\nabla v_h\|_{L^2_t(\dot{H}^\frac{1}{2})} \leq \min\big\{ {1}/{4C}, {1}/{4\sqrt{C}}\big\},
\end{equation*}
this gives in particular 
\begin{equation*}
C\|w^\ell\|_{L^\infty_t(\dot{H}^\frac{1}{2})}^2 \|\nabla v_h\|_{L^2_t(\dot{H}^\frac{1}{2})}^2 \leq \frac14\|w^\ell\|_{L^\infty_t(\dot{H}^\frac{1}{2})}^2.
\end{equation*}
By denoting 
\begin{equation*}
T_1\triangleq \sup\Big \{ \tau>0: C\|w^\ell\|_{L^\infty_\tau(\dot{H}^\frac{1}{2})} <{1}/{4} \Big \}.
\end{equation*}
Inequality \eqref{eq-w^l} gives, for all $t<T_1$
\begin{equation*}
\frac{1}{4} \|w^\ell(t)\|_{ \dot{H}^\frac{1}{2}}^2 \leq C \delta(t)
\end{equation*}
and seen that $\|\nabla v^l(\cdot)\|_{\dot{H}^\frac{1}{2}} \in L^2(\RR_+)$ and $\|v^l(\cdot)\|_{\dot{H}^1} \in L^4(\RR_+)$, one has
\begin{equation*} 
\delta(t)\longrightarrow 0,\quad t\rightarrow 0^+.
\end{equation*}
Hence, if we denote  $T_2 $ the first $t>0$ such that  $\delta(t) < {1}/{64 C^2} $ for all $t<T_2$. Setting $T^\star\triangleq \min\{T_1,T_2\}$, by usual continuity arguments, we conclude that, for all $t\in [0,T^\star)$, the $E_{t}(\dot{H}^\frac{1}{2})$ norm of $w^\ell$ remains bounded for all $t<T^\star$, more precisely we have.
\begin{equation*}
\|w^\ell\|_{L^\infty_t(\dot{H}^\frac{1}{2})}  <{1}/{4C}.
\end{equation*}
{$\bullet$ $2^{nd}$ {\it Step: }$ s\in (\frac{1}{2}, \frac{3}{2})$}. Again the scalar product in $\dot{H}^{s}$ for \eqref{NS-ll} with $w^\ell$ and \eqref{Eq:7} and \eqref{Eq:8} of Proposition \ref{Pr:3} allow us to get 
\begin{eqnarray*}
\frac{1}{2} \|w^\ell(t)\|_{ \dot{H}^s}^2 + \|\nabla w^\ell \|_{ L^2_t(\dot{H}^s)}^2  &\le& \Big(C \|w^\ell\|_{ L^\infty_t(\dot{H}^\frac{1}{2})} + C\|v_h\|_{L^\infty_t(\dot{H}^\frac{1}{2})} + {1}/{4}\Big)\|\nabla w^\ell\|_{ L^2_t(\dot{H}^s)}^2 \\
&&+C\|w^\ell\|_{L^\infty_t(\dot{H}^s)}^2 \bigg( \|\nabla v_h\|_{L^2_t(\dot{H}^\frac{1}{2})}^2 + \|v^l \|_{ L^4_t(\dot{H}^1)}^4\bigg)+C  \delta_s(t),
\end{eqnarray*}
with 
\begin{equation*}
\delta_s(t) \triangleq \|w^\ell\|_{ L^4_t(\dot{H}^1)}^2 \|v^l\|_{ L^4_t(\dot{H}^{s+\frac{1}{2}})}^2 +   \| f_\ell\|_{L^2_t(\dot{H}^{s-1})}^2 < \infty, \quad \forall t<T^\star.
\end{equation*}
We may suppose that, for $t$ small enough (we keep the same notation for the maximal time $T^\star$) 
\begin{equation*}
\|u^l \|_{ L^4_t(\dot{H}^1)}^4 < {1}/{4C}
\end{equation*}
to end with the following estimate, for all $t<T^\star$
\begin{equation*}
\|w^\ell(t)\|_{ \dot{H}^s}^2 + \|\nabla w^\ell \|_{ L^2_t(\dot{H}^s)}^2 \lesssim \delta_s(t)< \infty. 
\end{equation*}
$\bullet$ $3^{rd}$  {\it Step:} $s=\frac{3}{2}$. By similar arguments and by making use of inequality \eqref{Eq:N10}, it holds
\begin{eqnarray*}
\frac{1}{2} \|w^\ell(t)\|_{ \dot{H}^\frac{3}{2}}^2 + \|\nabla w^\ell\|_{ L^2_t(\dot{H}^\frac{3}{2})}^2  &\le&  C  \widetilde{\delta}(t) +\frac{1}{4}\|\nabla w^\ell \|_{ L^2_t(\dot{H}^s)}^2, 
\end{eqnarray*}
with 
\begin{align*}
\widetilde{\delta}(t) &\triangleq   \|v^l\|_{L^\infty_t(\dot{H}^1)}^2\|v^l\|_{L^2_t(\dot{H}^2)}^2 +\|w^\ell\|_{L^\infty_t(\dot{H}^1)}^2\|w^\ell\|_{L^2_t(\dot{H}^2)}^2 + \|v^l\|_{L^\infty_t(\dot{H}^1)}^2\|w^\ell\|_{L^2_t(\dot{H}^2)}^2 \\
& +\|w^\ell\|_{L^\infty_t(\dot{H}^1)}^2\|v^l\|_{L^2_t(\dot{H}^2)}^2 +\|v_h\|_{L^\infty_t(\dot{H}^1)}^2\|w^\ell\|_{L^2_t(\dot{H}^2)}^2  + \|w^\ell\|_{L^\infty_t(\dot{H}^1)}^2\|v_h\|_{L^2_t(\dot{H}^2)}^2 + \| f_\ell\|_{L^2_t(\dot{H}^{ \frac{1}{2}})}^2. 
\end{align*}
One deduces for all $t<T^{\star}$ that 
\begin{equation*} 
\|w^\ell(t)\|_{ \dot{H}^\frac{3}{2}}^2+\|\nabla w^\ell \|_{ L^2_t(\dot{H}^\frac{3}{2})}^2   \lesssim   \widetilde{\delta}(t) <\infty.
\end{equation*}
$\bullet$ $4^{th}$ {\it Step} $s=2$. By using estimates \eqref{Eq:11} and \eqref{Eq:12}, similar calculations to the previous steps lead to
\begin{equation}
\frac{1}{2} \|w^\ell(t) \|_{\dot{H}^2}^2  + \|\nabla w^\ell(t) \|_{L^2_t(\dot{H}^2)}^2\leq C\overline{\delta}(t) + C \int_0^t\bigg( \| \nabla v^h (\tau)\|_{ \dot{H }^1 }^2 \|w^\ell(\tau) \|_{\dot{H}^2}^2 \bigg)  d\tau+ \frac{1}{4} \|\nabla w^\ell(t) \|_{L^2_t(\dot{H}^2)}^2 
\end{equation}
with
\begin{align*}
\overline{\delta}(t) &\triangleq  \bigg(  \|v^h \|_{L^4_t(\dot{H }^1)}^2+  \|w^\ell \|_{L^4_t(\dot{H }^1)}^2 + \|v^l \|_{L^4_t(\dot{H }^1)}^2 \bigg) \bigg( \| \nabla w^\ell \|_{L^4_t(\dot{H}^\frac{1}{2} )}^2 + \| \nabla w^\ell \|_{L^4_t(\dot{H}^2)}^2 \bigg)\\
&+\bigg(  \|w^\ell \|_{L^4_t(\dot{H }^1)}^2  +  \|w^\ell \|_{L^4_t(\dot{H }^1)}^2  \bigg) \bigg( \| \nabla v^l \|_{L^4_t(\dot{H}^\frac{1}{2} )}^2 + \| \nabla v^l \|_{L^4_t(\dot{H}^2)}^2 \bigg) \\
&+  \| \nabla v^h \|_{L^2_t(\dot{H }^1)}^2 \| \nabla w^\ell \|_{L^\infty _t(\dot{H}^\frac{1}{2} )}^2 + \| f_\ell\|_{L^2_t(\dot{H}^{ 1})}^2,
\end{align*}
according to the previous steps $\overline{\delta}(t)$ is finite for all $t<T^*$, Gronwall lemma then insures 
\begin{equation*}
 \|w^\ell(t) \|_{\dot{H}^2}^2  + \|\nabla w^\ell(t)\|_{L^2_t(\dot{H}^2)}^2 \lesssim \overline{\delta}(t) \exp\big(C \|  v^h \|_{E_t(\dot{H }^1)}^2\big) <\infty ,\quad \forall t<T^*
\end{equation*}
This completes the proof of Proposition \ref{prop.local.NS_l}.
\end{proof}


\begin{proof}[Proof of Theorem \ref{thm.NS-l with regular initial data}]
To prove theorem \ref{thm.NS-l with regular initial data}, we consider the unique local solution $v_\ell$ to \eqref{NS-l} given by Proposition \ref{prop.local.NS_l} in $E_{T}(H^s)$ for all $T<T^\star$ and $s\in[\frac{1}{2},2]$, with $T^\star$ can be considred as the maximal lifespan in $H^1$, and we show now how to benefit of the axisymmetric structure to control the $H^1$ norm on $[T^{\star}-\eta,T^{\star}]$ for some $\eta<<1$, which allows to extend the solution, and contredicts the fact that $T^{\star}$ is maximal.

\hspace{0.5cm}The key idea to prove the $H^1$ global bound of $v_\ell$ is to explore the axisymmetric structure to get a uniform bound of $\alpha \triangleq\frac{\nabla \times v_\ell}{r}$ in $E_T(L^2)$ for all $T>0$. Indeed, the solution $v_\ell \in E_T(H^2)$, given by Proposition \ref{prop.local.NS_l}, for all $T<T^*$, combined with the Biot-Svart law, (see Lemma 2.2 from \cite{Abidi}) ensure that  $\alpha$ lies in $E_T(L^2)$, for all $T<T^*$. Now, we state the proof of the $H^1-$global bound of $v_{\ell}$ in the spirit \cite{Abidi}. For the rest of the proof we will agree the following notations. 
\begin{equation*}
\Gamma_h \triangleq (\nabla \times v_h)\vec e_{\theta},\;\Gamma_{\ell} \triangleq (\nabla \times v_\ell)\vec e_\theta,\; g_\ell \triangleq( \nabla \times f_\ell )\vec e_{\theta}.
\end{equation*}
 Recall that $v=v_h+v_{\ell}$ and it is clear that $\Gamma_{\ell}$ solves
\begin{equation}\label{Gamma^l modified equation}
\partial_t \Gamma_{\ell} +\Big(v^r  \partial_r   + v^z\partial_z   - \frac{v^r}{r}\Big)\Gamma_{\ell} - \Big( \partial_{r^2}^{2} +\partial_{z^2}^{2}+\frac{\partial_{r}}{r} - \frac{1}{r^2} \Big)\Gamma_{\ell} = g_{\ell} - v_{\ell}^{r} \partial_{r} \Gamma_h-v_{\ell}^{z} \partial_{z}\Gamma_h+ \frac{v_{\ell}^r}{r} \Gamma_{h},
\end{equation} 
with $(v_{h}^{r},v_{h}^{z})$ and $(v_{\ell}^{r},v_{\ell}^{z})$ refer respectively to the components of $v_{h}$ and $v_{\ell}$.

\hspace{0.5cm}
Taking the $L^2$-inner product of \eqref{Gamma^l modified equation} by $\Gamma_\ell$, integrating by parts, the  incompressibility condition $\Div v_\ell = \partial_r v_{\ell}^r + \partial_z v_{\ell}^z  + \frac{v_{\ell}^r}{r}=0$ and $\Div v_h =\partial_r {v_h}^r + \partial_z v_{h}^z  + \frac{v_{h}^{r}}{r}=0$ yield

\begin{eqnarray*}
\frac12\frac{d}{dt} \|\Gamma_{\ell}\|_{L^2}^2+\|D \Gamma_{\ell}\|_{L^2}^2+\Big\|\frac{\Gamma_\ell}{r}\Big\|_{L^2}^2= \int_{I(\pi,\RR,\RR_+)} \Big( g_{\ell} \Gamma_{\ell}+\frac{v^r}{r} (\Gamma_{\ell} )^2 + (v_{\ell}^r \partial_r \Gamma_{\ell} + v_{\ell}^z\partial_z \Gamma_{\ell} ) \Gamma_h + \frac{v_{\ell}^r}{r} \Gamma_h \Gamma_{\ell}\Big)r drdzd\theta
\end{eqnarray*}
Via, Cauchy-Schwartz and H\"older's inequalities, we end up with  
\begin{eqnarray*}
\frac12\frac{d}{ dt}\|\Gamma_{\ell}\|_{L^2}^2+\|D \Gamma_{\ell}\|_{L^2}^2+\Big\|\frac{\Gamma_{\ell}}{r}\Big\|_{L^2}^2 &\le &\|g_\ell \|_{\dot{H}^{-1}}\|\Gamma_{\ell}\|_{\dot{H}^{ 1}} + \Big\|\frac{\Gamma_{\ell}}{r}\Big\|_{L^2} \|v^r\|_{L^6} \|\Gamma_{\ell}\|_{L^3} + \| v_{\ell}\|_{L^6}\|D \Gamma_{\ell}\|_{L^2}\|\Gamma_h\|_{L^3}\\
&&+ \Big\|\frac{\Gamma_\ell}{r}\Big\|_{L^2} \| v_\ell\|_{L^6} \|\Gamma_h\|_{L^3},
\end{eqnarray*} 

In accordance with the following Sobolev embeddings $\|g_\ell \|_{\dot{H}^{-1}}\lesssim \|f_\ell\|_{L^2}, \|\Gamma_{\ell}\|_{\dot{H}^1}\approx\|D \Gamma_{\ell}\|_{L^2}$ and $\| v_\ell\|_{L^6}\lesssim \|\nabla v\|_{L^2}$ combined with Young inequaliy give rise to,
\begin{equation*}
 \frac12\frac{d}{dt} \|\Gamma_{\ell}\|_{L^2}^2+\|D \Gamma_{\ell}\|_{L^2}^2+\Big\|\frac{\Gamma_{\ell}}{r}\Big\|_{L^2}^2  \lesssim \|f_\ell\|_{L^2}^2 + \Big\|\frac{\Gamma_\ell}{r}\Big\|_{L^2}^\frac{4}{3} \|\nabla v\|_{L^2}^\frac{4}{3}\|\Gamma_\ell\|_{L^2}^\frac{2}{3} + \|\nabla v_\ell\|_{L^2}^2\|\Gamma_h\|_{L^3}^2.
\end{equation*}

After an integration in time and Young inequality with respect to time, it holds
\begin{eqnarray}\label{gamma_l}
\nonumber \|\Gamma_\ell(t)\|_{L^2}^2+ \|D\Gamma_\ell\|_{L^2_t(L^2)}^2+\Big\|\frac{\Gamma_\ell}{r}\Big\|_{L^2_t(L^2)}^2 &\lesssim &\|\Gamma_{0,\ell}\|_{L^2}^2 +\|f_\ell \|_{L^2_t(L^2)}^2 +\Big\|\frac{\Gamma_\ell}{r}\Big\|_{L^\infty_t(L^2)}^\frac{4}{3} \|\nabla v\|_{L^2_t(L^2)}^\frac{4}{3} \|\Gamma_\ell\|_{L^2_t(L^2)}^\frac{2}{3}\\
 &&+  \int_0^t\|\nabla v_\ell(\tau)\|_{L^2}^2\|\Gamma_h(\tau)\|_{L^3}^2 d\tau.
\end{eqnarray}

Now, we treat $\alpha \triangleq \frac{\Gamma_\ell}{r}$ which is gouverns the equation
\begin{equation}\label{Gamma^l/r equation}
\partial_t \alpha  +\big(v^r  \partial_r   + v^z\partial_z \big)\alpha  - \Big( \partial_r^2 + \partial_z^2 +\frac{3}{r} \partial_r\Big) \alpha=\frac{g_\ell}{r} - \frac{v_{\ell}^r}{r} \partial_r \Gamma_h-\frac{v_{\ell}^z}{r} \partial_z\Gamma_h + \frac{v_{\ell}^r}{r} \frac{\Gamma_h}{r}.
\end{equation} 

To do so, taking the $L^2$-inner product of \eqref{Gamma^l/r equation} with $\alpha$ and we integrate over $\RR^3$, one has
\begin{eqnarray}\label{Es:princ}
\frac12\frac{d}{dt} \|\alpha\|_{L^2}^2 + \int_{\RR^3}\big(v^r  \partial_r\alpha   + v^z\partial_z\alpha \big)\alpha dx&-& \int_{\RR^3}\Big( \partial_r^2\alpha + \partial_z^2\alpha +\frac{3}{r} \partial_r\alpha\Big)\alpha dx\\
 \nonumber&\leq &\bigg| \int_{I(\pi,\mathbb{R},\mathbb{R}_+)} \Big(\frac{g_\ell}{r} - \frac{v_{\ell}^r}{r} \partial_r \Gamma_h-\frac{v_{\ell}^z}{r} \partial_z\Gamma_h + \frac{v_{\ell}^r}{r} \frac{\Gamma_h}{r}\Big)\alpha r drdzd\theta \bigg|.
\end{eqnarray}
So, for the second term of the l.h.s we have 
\begin{eqnarray*}
\int_{\RR^3}\big(v^r  \partial_r\alpha   + v^z\partial_z\alpha \big)\alpha dx&=&\int_{I(\pi,\RR,\RR_{+}}(v^r\partial_r\alpha+v^z\partial_z\alpha)\alpha rdrdzd\theta\\
&=&-\pi\int_{\RR\times\RR_{+}}\Big(\partial_r v^r+\partial_z v^z+\frac{v^r}{r}\Big)\alpha^2rdrdz=0,
\end{eqnarray*}
where we have used the fact $\Div v=\partial_r v^r+\partial_z v^z+\frac{v^r}{r}=0$ and $v^r(0,\cdot)=0$ because $v$ is axisymmetric.  

For the second term, we use the fact $\Delta =\partial_{r}^2+\partial_{z}^2+\frac1r\partial_r$ to obtain
\begin{eqnarray*}
-\int_{\RR^3}\Big( \partial_r^2\alpha + \partial_z^2\alpha +\frac{3}{r} \partial_r\alpha\Big)\alpha dx &=&-\int_{\RR^3}(\Delta\alpha)\alpha dx-\int_{\RR^3}\frac{2}{r}(\partial_r\alpha)\alpha dx\\
&=&\|\nabla\alpha\|^2_{L^2}+2\pi\int_{\RR}\alpha^2(t,0,z)dz \ge \|\nabla\alpha\|^2_{L^2}.
\end{eqnarray*}
Collecting the last two estimates and insert them in \eqref{Es:princ}, it follows
\begin{equation}\label{Eq:alphaZ}
\frac12\frac{d}{dt} \|\alpha\|_{L^2}^2 + \|D\alpha(t)\|^{2}_{L^2}\leq \bigg| \int_{I(\pi,\mathbb{R},\mathbb{R}_+)} \Big(\frac{g_\ell}{r} - \frac{v_{\ell}^r}{r} \partial_r \Gamma_h-\frac{v_{\ell}^z}{r} \partial_z\Gamma_h + \frac{v_{\ell}^r}{r} \frac{\Gamma_h}{r}\Big)\alpha r drdzd\theta \bigg|.
\end{equation}
Now, for the second term of the r.h.s, a straightforward computation yields 
\begin{equation*}
- \frac{v_{\ell}^r}{r} \partial_r \Gamma_h-\frac{v_{\ell}^z}{r} \partial_z\Gamma_h + \frac{v_{\ell}^r}{r} \frac{\Gamma_h}{r}=-v_{\ell}^r\partial_r\Big(\frac{\Gamma_h}{r}\Big)-v_{\ell}^z\partial_r\Big(\frac{\Gamma_h}{r}\Big),
\end{equation*}
so, a new use of integration by parts, the fact $\Div v_{ell}=0$ and $v_{\ell}^{r}(0,\cdot)=0$ we further get

\begin{equation*}
- \int_{I(\pi,\mathbb{R},\mathbb{R}_+)}\bigg(\frac{v_{\ell}^r}{r} \partial_r \Gamma_h+\frac{v_{\ell}^z}{r} \partial_z\Gamma_h - \frac{v_{\ell}^r}{r} \frac{\Gamma_h}{r}\bigg)\alpha rdrdzd\theta=\int_{I(\pi,\mathbb{R},\mathbb{R}_+)} \Big(v_{\ell}^r\partial_r\alpha+v_{\ell}^z\partial_r\alpha\Big)\frac{\Gamma_h}{r}rdrdzd\theta.
\end{equation*}
Gathering the last two and plug them in \eqref{Eq:alphaZ}, thus Cauchy-Schwartz's and H\"older's inequalities lead to
\begin{equation*}
\frac12\frac{d}{dt} \|\alpha\|_{L^2}^2 + \|D \alpha\|_{L^2}^2 \leq \bigg\|\frac{g_\ell}{r}\bigg\|_{L^2} \|\alpha\|_{L^2}+\|D\alpha \|_{L^2} \bigg\| \frac{v_\ell}{r}\bigg\|_{L^6} \|\Gamma_h\|_{L^3}
\end{equation*}

Integrating in time over $(0,t)$, then in view of Young inequality, one has
\begin{equation}\label{eq-alpha}
\|\alpha(t)\|_{L^2} + \|D \alpha\|_{L^2_t(L^2)}  \lesssim \|\alpha_0\|_{L^2} + \Big\|\frac{g^\ell}{r}\Big\|_{L^1_t(L^2)} + \bigg(\int_0^t     \Big\|\frac{v_\ell(\tau)}{r}\Big\|_{L^6}^2 \|\Gamma_h(\tau)\|_{L^3}^2 d\tau\bigg)^\frac{1}{2} ,
\end{equation} 

For the first term of the r.h.s., Hardy inequality, Sobolev embeddings $\dot{H}^1(\RR^3)\hookrightarrow L^6(\RR^3)$ and \eqref{Eq:2-eqv}  provide that
\begin{eqnarray*}
\Big\|\frac{v_\ell}{r}\Big\|_{L^6} \lesssim \|\partial_r v_{\ell}\|_{L^6}&\lesssim& \|\nabla v_{\ell}\|_{\dot H^1} \\
&\lesssim &\|D \Gamma_\ell\|_{L^2} + \Big\|\frac{\Gamma_\ell}{r}\Big\|_{L^2},
\end{eqnarray*}
whereas, concerning the seccond we have
\begin{equation*} 
\Big\|\frac{g_\ell}{r}\Big\|_{L^1_t(L^2)} \lesssim \|f_\ell\|_{L^1_t(H^2)}.
\end{equation*}
Plugging these into \eqref{eq-alpha}, it happens

\begin{equation}\label{eq-alpha-2}
\|\alpha(t)\|_{L^2}  + \|D \alpha\|_{L^2_t(L^2)}  \lesssim \|v_0\|_{H^2} + \|f_\ell\|_{L^1_t(H^2)} + \bigg(\int_0^t    \Big(\|D \Gamma_\ell\|_{L^2} + \bigg\|\frac{\Gamma_\ell}{r}\Big\|_{L^2}\Big)^2 \|\Gamma_h(\tau)\|_{L^3}^2d\tau\bigg)^\frac{1}{2} 
\end{equation}
\hspace{0.5cm}Let us denote 
\begin{equation*}
\Pi(T_0) \triangleq \underset{t\in[0,T_0]}{\sup} \|\Gamma_\ell(t)\|_{L^2} + \|D \Gamma_\ell\|_{L^2_t(L^2)} +\Big\|\frac{\Gamma_\ell}{r}\Big\|_{L^2_t(L^2)}.
\end{equation*}
In accordance with Sobolev embeddings and Proposition \ref{Pro:2}, one obtains
\begin{equation*}
 \|\Gamma_h \|_{L^\infty_t(L^3)} \lesssim \|v_h \|_{L^\infty_t(H^\frac{3}{2})} \lesssim \EE
\end{equation*}
Inserting this lastest in \eqref{eq-alpha-2}, then for all $t\in (0,T_0)$, we find
 \begin{equation}\label{eq-alpha-final}
\|\alpha(t)\|_{L^2}  + \|D \alpha\|_{L^2_t(L^2)}  \lesssim \|v_0\|_{H^2} + \|f_\ell\|_{L^1_t(H^2)} + \EE\Pi(T_0)
\end{equation}
Substituting \eqref{eq-alpha-final} into \eqref{gamma_l}, we infer that
\begin{eqnarray*}
\Pi(T_0)  &\lesssim & \|v_0\|_{H^1} +   \|f_{\ell}\|_{L^{2}_{T_0}(L^2)} + \Pi(T_0)\|\Gamma_h \|_{L^2_{T_0}(L^3)}  
  +  \|\alpha\|_{L^{\infty}_{T_0}(L^2)}^\frac{2}{3} \|\nabla v\|_{L^2_{T_0}(L^2)}^\frac{2}{3} \|\Gamma_\ell\|_{L^2_{T_0}(L^2)}^\frac{1}{3}\\
&  \lesssim & \|v_0\|_{H^1} + \|f_\ell\|_{L^2_{T_0}(L^2)} +  \Pi(T_0)\|\Gamma_h\|_{L^2_{T_0}(L^3)}+ \bigg( \|v_0\|_{H^2} + \|f_\ell\|_{L^1_t(H^2)}\\
&&+ \EE \Pi(T_0) \bigg)^\frac{2}{3}\times \|\nabla v\|_{L^2_{T_0}(L^2)}^\frac{2}{3} \|\nabla v_{\ell}\|_{L^2_{T_0}(L^2)}^\frac{1}{3}.
\end{eqnarray*} 
Again, Sobolev embedding and Proposition \ref{Pr:2} yield 
\begin{equation*}
\|\Gamma_h \|_{L^2_{T_0}(L^3)}  \lesssim \|v_h \|_{L^2_{T_0}(H^\frac{3}{2})} \lesssim \EE.
\end{equation*}
Now, we choose $N_0\gg1$ such that $\varepsilon\ll1$, to end with
\begin{equation*}
\Pi(T_0)\lesssim \|v_0\|_{H^1} + \|v_0\|_{H^2} + \|f_\ell\|_{L^2_{T_0}(L^2)}+\|f_\ell\|_{L^1_{T_0}(H^2)} + \|D v\|_{L^2_{T_0}(L^2)}^3.
\end{equation*}
The left hand-side of the last inequality is finite for all finite $T_0>0$, more precisely we have
\begin{equation*}
\Pi(T_0)     \lesssim \|v_0\|_{H^1} + \|v_0\|_{H^2} + \big(1 + T_0^\frac{1}{2} N_0^2\big) \|f \|_{L^2_{T_0}(L^2)} + \big(\|v_0\|_L^2 + (1+\sqrt{T_0}) \|f\|_{L^2_{T_0}(H^{-1})} \big)^3 < \infty.
\end{equation*}
The last inequality provides the desired $E_T(H^1)$, for all finite $T>0$. Theorem \ref{thm.NS-l with regular initial data} is then proved.
\end{proof}
\begin{Rema}
The $L^\beta_T(L^2)$-norm of $f$ does not explicitly apear in the last inequality above, but according to the proof of Proposition \ref{Pro:2}, in fact the choice of $N_0$ is really related to fact that $\| f \|_{L^\beta_T(L^2)}$ is finite.
\end{Rema}

\begin{proof}[Proof of theorem \ref{Th:2}]
In order to derive Theorem \ref{Th:2} from Theorem \ref{thm.NS-l with regular initial data}, we will proceed by the following argument. For $v_0$ in $H^\frac{1}{2}$, we construct a unique solution $v=v_h+v_\ell$, with $v_h$ is in $E_t(H^\frac{1}{2})$ for all $t>0$, and  $v_\ell$ solves \eqref{NS-l} in $E_{T}(H^\frac{1}{2})$, with $T<T^\star$ the maximal lifespan of existence in $H^\frac{1}{2}$. Since $v_\ell\in L^2_T(H^\frac{3}{2})$, there exists some $t_0\in (0,T^\star)$ such that $v_\ell (t_0) \in H^\frac{3}{2}$, then  Proposition \ref{prop.local.NS_l} enables to solve \eqref{NS-l} again on $(t_0,T^*)$ with initial data $v_\ell(t_0)$. So, due to the uniqueness of the solution, we infer that $v_\ell\in L^\infty((t_0,T^\star); H^\frac{3}{2}) \cap L^2((t_0,T^\star); H^\frac{5}{2})$. By the same processus there exists $t_1\in(t_{0},T^\star)$ such that $v_\ell(t_1) \in H^2$. Hence, if we take $v_\ell(t_1)$ as the new initial data we succeed to construct a unique solution given by Theorem \ref{thm.NS-l with regular initial data} on $(t_1, T)$ for all $T>t_1$, we denote this solution by $\widetilde{v}_\ell$. The uniqueness property of the Navier-Stokes equations in $E_T(\dot{H}^\frac{1}{2}) $ guarantees that $v_\ell = \widetilde{v}_\ell$ on $[t_1,T^\star)$ and Theorem \ref{Th:2} follows. 
\end{proof}
\subsection{About Boussinesq system}
The goal of this subsection is to show how to derive Theorem \ref{Th:3} from Theorem \ref{Th:2}.
\begin{proof}[Proof of Theorem \ref{Th:3}]
Since $(v_0,\rho_0)\in H^\frac{1}{2}(\RR^3) \times L^2(\RR^3)$, then in accordance of Theorem 1.1 in \cite{Danchin-Paicu1} we can construct at least one global solution $(v,\rho)$ to \eqref{B} which satisfies the energy inequalities. Next, for $\rho\in L^\infty(L^2) \subset L^\beta_{loc}(L^2)$ for $\beta> 4$, by virtue of Theorem \ref{Th:2}, we associate to this $\rho$, the unique global solution to \eqref{NS} in $E_T(H^\frac{1}{2})$, for all $T>0$. The final step consists at proving that the solution $(v,\rho)$ is the unique one. Doing so, we will need some additionnal regularity on $v$ and $\rho$, therefore the conclusion of the proof is a direct application of the following proposition.
\end{proof} 

\begin{prop}\label{prop.regularity Boussinesq}
Let $T>0$ and $(v,\rho )$ be a solution of \eqref{B} on $(0,T)$ in $E_T(H^\frac{1}{2}) \times L^\infty_T(L^2)$. If in addition $(v_0,\rho_0) \in B^0_{3,1} \times B^0_{3,1}$ then the following assertion holds
\begin{equation}\label{propagation of regularity}
(v,\rho)\in \big( L^\infty_T(B^0_{3,1}) \cap L^1_T(B^2_{3,1}) \big) \times  L^\infty_T(B^0_{3,1})
\end{equation}  
and $(v,\rho)$ is actually the unique solution to \eqref{B} on $(0,T)$.
\end{prop}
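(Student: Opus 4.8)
The plan is to establish the propagation of the $B^0_{3,1}$-regularity for both $v$ and $\rho$ in two intertwined steps, and then derive uniqueness from the resulting Lipschitz control on the velocity. First I would estimate the density. Since $\rho$ satisfies a pure transport equation $\partial_t\rho + v\cdot\nabla\rho = 0$ with a divergence-free field $v$, I would apply Proposition \ref{prop1} (the logarithmic/Vishik-type estimate) with $\mu=0$ and $f=0$, which yields
\begin{equation*}
\|\rho\|_{\widetilde{L}^\infty_T(\dot{B}^0_{3,1})} \le C\|\rho_0\|_{\dot{B}^0_{3,1}}\Big(1 + \int_0^T \|\nabla v(\tau)\|_{\dot{B}^0_{\infty,1}}\,d\tau\Big).
\end{equation*}
So the density regularity is conditional on a bound for $\int_0^T\|\nabla v\|_{\dot{B}^0_{\infty,1}}$, i.e. on $v\in L^1_T(\mathrm{Lip})$. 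To get that, I would work on the velocity equation viewed as a forced Navier--Stokes system with force $\rho\vec e_3$, using the smoothing estimate of Lemma \ref{Lem:2} dyadically. The embedding chain $\dot{B}^0_{3,1}\hookrightarrow\dot{B}^{-1/2}_{\infty,1}$-type considerations and interpolation between $E_T(H^{1/2})$ (already known) and the target space give that the real work is to close a bound for $v$ in $\widetilde{L}^\infty_T(\dot{B}^0_{3,1})\cap L^1_T(\dot{B}^2_{3,1})$, noting $\dot{B}^2_{3,1}\hookrightarrow\mathrm{Lip}$ in $\RR^3$ (since $2 - 3/3 = 1$), which is exactly what feeds back into the density estimate.

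Concretely, for the velocity I would localize in frequency, write Duhamel's formula, and apply Lemma \ref{Lem:2} with $\kappa_2 = 1$, $\kappa_1 = \infty$ for the $\widetilde{L}^\infty_T(\dot{B}^0_{3,1})$ part and $\kappa_1 = 1$ together with the gain $2^{2q(-1)}$ absorbing two derivatives for the $L^1_T(\dot{B}^2_{3,1})$ part. The forcing term contributes $\|\rho\vec e_3\|_{L^1_T(\dot{B}^0_{3,1})} \le T\|\rho\|_{L^\infty_T(\dot{B}^0_{3,1})}$, which is finite by the previous step (circularly — so really one sets up a Gronwall/bootstrap argument on $[0,T]$ coupling the two a priori bounds). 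The bilinear term $\mathbf{P}\Div(v\otimes v)$ is handled by Bony's decomposition and the paraproduct estimates, in the same spirit as Lemma \ref{lemma law of product1}, producing a bound of the form $\int_0^t \|\nabla v(\tau)\|_{\dot{B}^0_{\infty,1}}\|v(\tau)\|_{\dot{B}^0_{3,1}}\,d\tau$ plus lower-order contributions controlled by the already-known $E_T(H^{1/2})$ norm. Feeding the density estimate into this and applying Gronwall's lemma (the coupling is super-linear but the total-time integrals are all finite because $v\in L^2_T(\dot{H}^{3/2})\hookrightarrow L^2_T(\dot{B}^0_{\infty,1})$ gives a starting integrable control), one closes all three bounds on any finite interval $[0,T]$.

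For uniqueness, suppose $(v_1,\rho_1)$ and $(v_2,\rho_2)$ are two solutions in the stated class with the same data; set $\delta v = v_1 - v_2$, $\delta\rho = \rho_1 - \rho_2$. I would estimate $\delta v$ in $L^\infty_T(L^2)\cap L^2_T(\dot H^1)$ and $\delta\rho$ in a space with one negative derivative, typically $\widetilde{L}^\infty_T(\dot B^{-1}_{2,\infty})$ or $L^\infty_T(H^{-1})$, since the transport equation loses a derivative on the difference when one differences the term $v\cdot\nabla\rho$. The key point is that the coefficient fields $v_j$ now lie in $L^1_T(\mathrm{Lip})$ by the propagation result, so the transport estimate for $\delta\rho$ picks up a factor $\exp\big(C\int_0^T\|\nabla v_2\|_{L^\infty}\big)<\infty$, and the difference of the velocity equations is closed by the standard energy method with the buoyancy term $\delta\rho\vec e_3$ paired against $\delta v$ handled via $\|\delta\rho\|_{\dot H^{-1}}\|\delta v\|_{\dot H^1}$ and Young's inequality. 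Gronwall's lemma then forces $\delta v \equiv 0$ and $\delta\rho\equiv 0$.

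The main obstacle I anticipate is the \emph{coupling} in the propagation step: the density bound needs $v\in L^1_T(\mathrm{Lip})$ while the Lipschitz bound on $v$ needs the density in $L^1_T(\dot B^0_{3,1})$, so one cannot do the two estimates sequentially and must instead run a simultaneous Gronwall argument on $[0,T]$, carefully checking that every time-integral that appears (in particular the one feeding the logarithmic estimate) is a priori finite from the already-established $E_T(H^{1/2})\times L^\infty_T(L^2)$ regularity — this is where $v\in L^2_T(\dot H^{3/2})$ and the endpoint embeddings into $\dot B^0_{\infty,1}$ are essential. A secondary technical nuisance is keeping track of homogeneous versus inhomogeneous Besov spaces (the statement uses $B^0_{3,1}$ while Proposition \ref{prop1} is stated homogeneously), which is resolved by the low-frequency control coming from the $L^2$-based energy bounds.
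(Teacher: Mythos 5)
Your architecture — transport estimate for $\rho$ from Proposition \ref{prop1} with $f\equiv 0$, conditional on $v\in L^1_T(\dot B^1_{\infty,1})$, fed back through $L^1_T(\dot B^2_{3,1})\hookrightarrow L^1_T(\dot B^1_{\infty,1})$ and a Duhamel representation of the velocity — matches the paper. The genuine gap is in how you propose to close the velocity estimate. Your bilinear bound produces a Gronwall kernel $\|\nabla v(\tau)\|_{\dot B^0_{\infty,1}}$, but this is precisely the quantity whose time integral you are trying to prove finite: it is controlled by $\mathcal{V}(t)=\|v\|_{L^1_t(\dot B^2_{3,1})}$ and by nothing weaker that is available. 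The a priori regularity $v\in L^2_T(\dot H^{3/2})$ does not supply a ``starting integrable control'': the embedding you invoke is false at the endpoint ($\dot H^{3/2}(\RR^3)=\dot B^{3/2}_{2,2}\hookrightarrow \dot B^0_{\infty,2}$, not $\dot B^0_{\infty,1}$), and in any case it controls $v$, not $\nabla v$, which sits a full derivative higher. Hence your ``simultaneous Gronwall'' is of Riccati type, $X(t)\lesssim C_0+\int_0^t X'(\tau)X(\tau)\,d\tau$, which closes only for small data or small time. This is exactly the obstruction the paper isolates in \eqref{first estimate V} (the factor $K(t)\mathcal{V}(t)$ with $K(t)=\|v\|_{L^\infty_t(H^{1/2})}$ not small, hence not absorbable), and it is what separates the present $H^{1/2}$ setting from the $H^1$ setting of Hmidi--Rousset, where $v\in L^2_T(\dot B^{3/2}_{2,1})$ is an algebra.

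The missing idea is the paper's intermediate estimate: using Duhamel and heat smoothing one first bounds $v$ in $\widetilde L^{4/3}_t(\dot B^{3/2}_{3,4/3})$ by $\|v_0\|_{\dot B^0_{3,1}}+\|v\|_{L^4_t(\dot H^1)}\|v\|_{L^2_t(\dot H^{3/2})}+t^{3/4}\|\rho_0\|_{\dot B^0_{3,1}}\big(1+\int_0^t\mathcal{V}\big)$, i.e.\ by quantities already finite from $E_T(H^{1/2})$ plus a term \emph{linear} in $\int_0^t\mathcal{V}$; the bilinear term is then estimated as $\|Q(v,v)\|_{L^1_t(\dot B^0_{3,1})}\lesssim \|v\|_{L^4_t(\dot H^1)}\|v\|_{L^{4/3}_t(\dot B^{3/2}_{3,4/3})}$, so that the inequality for $\mathcal{V}$ becomes genuinely linear, $\mathcal{V}(t)\lesssim \mathcal{A}(t)+\mathcal{B}(t)\int_0^t\mathcal{V}(\tau)\,d\tau$, and Gronwall applies. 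Without this (or an equivalent device) your propagation step does not close. Secondarily, for uniqueness the paper follows Danchin--Paicu in $\dot B^{-1}_{3,1}\times\dot B^{-1}_{3,1}$; your $L^2\times H^{-1}$ scheme would need extra care with $\Div(\delta v\,\rho_2)$ and with transport at negative regularity, but that is minor compared with the main gap.
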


\begin{proof}
We will only show how to propagate the regularity in \eqref{propagation of regularity}, the uniqueness part is well known to hold in these spaces, see for instance theorem 1.3 in \cite{Danchin-Paicu1}.

\hspace{0.5cm}We intend to extend the proof of the previous Proposition to more general class of system than \eqref{B}. Such system takes the form
\begin{equation}\label{B-mu-general}
\left\{ \begin{array}{ll} 
\partial_{t}v-\mu\Delta v+\nabla p= Q(v,v)+\rho \vec e_3 & \textrm{if $(t,x)\in \RR_+\times\RR^3$,}\\
\partial_{t}\rho+v\cdot\nabla \rho=0 & \textrm{if $(t,x)\in \RR_+\times\RR^3$,}\\ 
\Div v=0, &\\ 
({v},{\rho})_{| t=0}=({v}_0,{\rho}_0).  
\end{array} \right.\tag{B$_{\mu,Q}$}
\end{equation}
where
\begin{equation*}
\big( Q(v,v)\big)^j= \sum_{i=1}^3q_{i,j}(D)(v^iv^j)
\end{equation*}
and $\{q_{i,j}\}_{1\leq i,j\leq 3}$ are Fourier multiplyers of order $1$.

\hspace{0.5cm}Thanks to Proposition \ref{prop1} with $f\equiv0$, we have the following estimate
\begin{equation}\label{eq.rho in B^0_{3,1}}
\|\rho\|_{\widetilde{L}^{\infty}_t( \dot{B}^0_{3,1})} \lesssim \|\rho_0\|_{\dot{B}^0_{3,1}} \bigg( 1+\int_0^t  \| \nabla v(\tau)\|_{\dot{B}^0_{\infty,1}}d\tau \bigg).
\end{equation}

Therefore, the control of $\rho$ in $ L^\infty_T(B^0_{3,1})$ requires a control of $v$ in $L^1_T(\dot{B}^1_{\infty,1})$, which can be done, due to sobolev embedding, if we know how to control $v$ in $L^1_T(B^2_{3,1}) $. To do so, we consider the Duhamel's formula associated to the velocity equation

\begin{equation*}
v(t,\cdot) = \mathbb{S}(t)v_0(\cdot) - \int_0^t \mathbb{S}(t-\tau){\bf P}Q(v , v)(\tau, \cdot) d\tau -\int_0^t \mathbb{S}(t-\tau) {\bf P} \rho \vec e_3(\tau, \cdot) d\tau.  
\end{equation*}
To make the presentation simple enough, we will use in the rest of the proof the following notation

\begin{equation*}
\widetilde{\mathcal{E}}_T(\dot{B}^0_{3,1})\triangleq \widetilde{L}^\infty_T(\dot{B}^0_{3,1}) \cap \widetilde{L}^1_T(\dot{B}^2_{3,1}).
\end{equation*}
Owing to the continuity property of the heat semi-group ${\bf S}(\cdot)$, we have

\begin{equation}\label{eq.last theorem}
\|v\|_{\widetilde{\mathcal{E}}_t(\dot{B}^0_{3,1})} \lesssim \| v_0\|_{\dot{B}^0_{3,1}}  + \|Q(v , v)\|_{\widetilde{L}^1_t(\dot{B}^0_{3,1})} + \| {\rho}\|_{\widetilde{L}^1_t(\dot{B}^0_{3,1})}.
\end{equation}  
In fact, if we deal directly with $\widetilde{L}^1_T(\dot{B}^2_{3,1}) $ estimate we will end with the following issue. By exploring the following estimate the proof of which is an easy application of paraproduct law, 

\begin{equation*}
\|Q(v,v)\|_{\widetilde{L}^1_t(\dot{B}^0_{3,1})} \lesssim \|(v , v)\|_{\widetilde{L}^1_t(\dot{B}^1_{3,1})} \lesssim \| v\|_{L^\infty_t(\dot{B}^{-1}_{\infty,\infty})}  \| v\|_{L^1_t(\dot{B}^2_{3,1})},
\end{equation*}
by denoting $\mathcal{V}(t) \triangleq \|v\|_{L^1_t(\dot{B}^2_{3,1})}$ and $K(t)\triangleq \|v\|_{L^\infty_t(H^\frac{1}{2})}$ and on account \eqref{eq.rho in B^0_{3,1}}, one has
\begin{equation}\label{first estimate V}
\mathcal{V}(t) \lesssim C_0 + K(t) \mathcal{V}(t) + \int_0^t C_0 \mathcal{V}(\tau) d\tau.
\end{equation} 

Remark, that the above estimate doesn't provide a global control of $\mathcal{V}(t)$ for all $t\leq T< \infty$. The difference factor between our context and that of \cite{Hmidi-Rousset} is the following: in \cite{Hmidi-Rousset}, the authors dealing with an initial data $v_0$ in $H^1(\RR^3)$ which gives a solution $v$ in $L^2_T(H^2)$ and eventually by interplation, one obtains in their context that $v$ belongs to $L^2_T(\dot{B}^\frac{3}{2}_{2,1})$. Or, this latter space in an algebra, whereupon the following estimate holds 

\begin{eqnarray}\label{eq.Hmidi}
\|Q (v,v)\|_{\widetilde{L}^1_t(\dot{B}^0_{3,1})} &\lesssim& \| v\|_{L^2_t(\dot{B}^{\frac{3}{2}}_{2,1})}^2\\
\nonumber&\leq& C_0 e^{C_0t}.
\end{eqnarray} 
Therefore in \cite{Hmidi-Rousset}, instead of \eqref{first estimate V}, they have
\begin{equation*}
\mathcal{V}(t) \lesssim C_0 +C_0e^{C_0t}+ \int_0^tC_0 \mathcal{V}(\tau) d\tau.
\end{equation*}
which is sufficient to control $\mathcal{V}$. Contrary, in our case we deal only with initial data in $H^\frac{1}{2}$, so the maximal gain of regularity is $L^2_T(H^\frac{3}{2})$. Consequently, we're below to the regularity threshold of Hmidi-Rousset's approach\cite{Hmidi-Rousset} which in turns becomes not available in our case. To remedy this latter, we must propose another argument, in particular to control the non-linear term without using the $L^2_t(H^\frac{3}{2})$ norm when we deal with the low frequencies in the Bony's decomposition of $v\otimes v$. The first step consists to establish a control of the velocity $v$ in $\widetilde{L}^\frac{4}{3}_t(\dot{B}^\frac{3}{2}_{3,\frac{4}{3}}) =L^\frac{4}{3}_t(\dot{B}^\frac{3}{2}_{3,{4}/{3}})$ in terms of the 
$L^4_t(\dot{H}^1)$ and $L^2_T(\dot{H}^\frac{3}{2})$ norms of $v$, which are already controlled since $v\in E_T(H^\frac{1}{2})$. In the next step, in order to control $\|v\|_{L^1_t(\dot{B}^2_{3,1})} $, we estimate the bilinear term in terms of the $L^{\frac{4}{3}}_t(\dot{B}^{\frac{3}{2}}_{3,{4}/{3}}) $ and the $L^4_t(\dot{H}^1)$ norms of $v$ , more details are given below.

\hspace{0.5cm}Taking advantage of a continuity property of $\mathbb{S}(\tau)$ we have for all $r\geq 1$
\begin{equation*}
\| \mathbb{S}(\cdot)v_0\|_{\widetilde{L}^{\frac{4}{3}}_{t}(\dot{B}^{\frac{3}{2}}_{3,r})} \lesssim \|v_0\|_{\dot{B}^0_{3,1}}.
\end{equation*}

In particular, for $r=\frac{4}{3}$, we obtain 
\begin{equation*}
\|\mathbb{S}(\cdot)v_0\|_{\widetilde{L}^{\frac{4}{3}}_{t}(\dot{B}^{\frac{3}{2}}_{3,{4}/{3}})} \lesssim \| {v_0}\|_{\dot{B}^0_{3,1}}.
\end{equation*}

Defining
\begin{equation*}
\mathcal{K}g(t,\cdot) \triangleq \int_0^t \mathbb{S}(t-\tau) g(\tau, \cdot) d\tau.
\end{equation*} 
The following inequality
   \begin{equation*}
 \|\mathcal{K} g\|_{ L^\frac{4}{3}_t (\dot{B}^\frac{3}{2}_{3,4/3})} \lesssim \| g\|_{L^\frac{4}{3}_t( \dot{B}^{\frac{3}{2}-2}_{3,{4}/{3}})}
\end{equation*}

together with Sobolev embedding $\dot{B}^{0}_{2,1}  \hookrightarrow \dot{B}^{\frac{3}{2}-2}_{3,{4}/{3}}$ and the estimate 
\begin{equation*}
 \|Q(v,v)\|_{L^\frac{4}{3}_t(\dot{B}^{0}_{2,1} )} \lesssim \| v\|_{L^4_t(\dot{H}^1)}\| v\|_{L^2_t(\dot{H}^\frac{3}{2})},
\end{equation*}
lead to
 \begin{equation*}
\|v\|_{\widetilde{L}_t^\frac{4}{3}(\dot{B}^\frac{3}{2} _{3,4/3})} \lesssim \|v_0\|_{\dot{B}^0_{3,1}}+ C\| v\|_{L^4_t(\dot{H}^1)}\| u\|_{L^2_t(\dot{H}^\frac{3}{2})}+ \|\rho\|_{L^\frac{4}{3}_t(\dot{B}^0_{3,1})}
\end{equation*}
By means of \eqref{eq.rho in B^0_{3,1}}, it follows
\begin{equation}\label{eq.L^4/3}
\|v\|_{\widetilde{L}_t^\frac{	4}{3}(\dot{B}^\frac{3}{2} _{3,4/3})} \lesssim \|v_0\|_{\dot{B}^0_{3,1}}  + C\| v\|_{L^4_t(\dot{H}^1)}\| u\|_{L^2_t(\dot{H}^\frac{3}{2})}+t^\frac{3}{4} \| \rho_0\|_{ \dot{B}^0_{3,1} }\bigg( 1+ \int_0^t \mathcal{V}(\tau) d\tau \bigg).
\end{equation}
Finally, we come back to estimate $\mathcal{V}(t)= \|v\|_{\widetilde{L}^1_t(\dot{B}^1_{3,1})}$. To do so, we summarize from \eqref{eq.last theorem}, and instead of using \eqref{eq.Hmidi}, we use the following estimates, which we will prove at the end of this section
\begin{eqnarray}\label{bilinear term-last section}
\|Q(v\otimes v)\|_{L^1_t(\dot{B}^0_{3,1})} &\lesssim& \|v\|_{L^4_t(\dot{H}^1)}\| v\|_{L^\frac{4}{3}(\dot{B}^\frac{3}{2}_{3,2})}\\
\nonumber&\lesssim& \|v\|_{L^4_t(\dot{H}^1)}\|v\|_{L^\frac{4}{3}_t(\dot{B}^\frac{3}{2}_{3,4/3})}
\end{eqnarray}
 with the help of \eqref{eq.L^4/3} and the embedding $E_T(H^\frac{1}{2}) \hookrightarrow L^4_T(\dot{H}^1)$, it happens
\begin{equation*}
\|Q(v,v)\|_{L^1(\dot{B}^0_{3,1})} \lesssim \|v\|_{E_t(H^\frac{1}{2})} \bigg( \| v_0\|_{\dot{B}^0_{3,1}}  + C\|v\|_{E_t(H^\frac{1}{2})}^2+ t^\frac{3}{4} \| \rho_0\|_{ \dot{B}^0_{3,1} }\big ( 1+ \int_0^t \mathcal{V}(\tau) d\tau \big ) \bigg).
\end{equation*}
By setting 
\begin{equation*}
\mathcal{A}(t) \triangleq \| {v_0}\|_{\dot{B}^0_{3,1}}+\| {\rho_0}\|_{\dot{B}^0_{3,1}} + \| v\|_{E_t(H^\frac{1}{2})} \big( \| {v_0}\|_{\dot{B}^0_{3,1}}  + C\| v\|_{E_t(H^\frac{1}{2})}^2 + t^\frac{3}{4}\| {\rho_0}\|_{\dot{B}^0_{3,1}} \big)
\end{equation*}
and
\begin{equation*}
\mathcal{B}(t) \triangleq \| {\rho_0}\|_{\dot{B}^0_{3,1}}\Big(  1+  t^\frac{3}{4}\| v\|_{E_t(H^\frac{1}{2})}\Big)
\end{equation*}
which are finite for all $t<\infty$. Therefore, one concludes from \eqref{eq.last theorem} that
\begin{equation*}
\mathcal{V}(t) \lesssim  \mathcal{A}(t) + \mathcal{B}(t) \int_0^t \mathcal{V}(\tau) d\tau
\end{equation*}
 Gronwall's estimate yields 
\begin{equation}\label{last-estimate}
\mathcal{V}(t) \lesssim \mathcal{A}(t)e^{t\mathcal{B}(t)}, \quad \text{ for all }\; t< \infty 
\end{equation}
Once, we have established the bound of $\mathcal{V}(t)$ for all $t>0$, we can control $v$ in $\widetilde{\mathcal{E}}_t(\dot{B}^0_{3,1})$ and $\rho$ in $\widetilde{L}^\infty_t(\dot{B}^0_{3,1}) $ by substituting \eqref{last-estimate} in \eqref{eq.last theorem} and \eqref{eq.rho in B^0_{3,1}}. Proposition \ref{prop.regularity Boussinesq} is then proved.

\hspace{0.5cm}For the sake of completeness, we briefly outline the proof of \eqref{bilinear term-last section}. For this aim, employing the following law product in three dimensions of space
\begin{equation*}
\big( \dot{H}^1 \cap \dot{B}^\frac{3}{2}_{3,2} \big) \times \big( \dot{H}^1 \cap \dot{B}^\frac{3}{2}_{3,2} \big) \hookrightarrow \dot{B}^1_{3,1}.
\end{equation*}  
Indeed, Bony's decomposition enubles us to write
\begin{equation*}
uv= T_uv + T_vu + R(u,v).
\end{equation*}
For $T_uv$, we have
\begin{align*}
\|\dot{\Delta}_j(T_uv) \|_{ L^3}& \lesssim \| \dot{S}_{j-1} u\|_{L^\infty}  \|\dot{\Delta}_j v \|_{L^3}\\
 &\lesssim c_j^2 2^{-j} \| u\|_{\dot{B}^{-\frac{1}{2}}_{\infty,2}}\| v\|_{\dot{B}^{ \frac{3}{2}}_{3,2}},
\end{align*}
with $\sum_{j\in\ZZ} c_j^2 \leq 1$. Whence, the $3D$ Sobolev embedding $\dot{H}^1 \hookrightarrow \dot{B}^{-\frac{1}{2}}_{\infty,2}$ gives
\begin{equation*}
\|T_uv \|_{\dot{B}^1_{3,1}}\lesssim \| u\|_{\dot{H}^1}\| v\|_{\dot{B}^{ \frac{3}{2}}_{3,2}}.
\end{equation*}
Similarly, by exchanging the positions of $u$ and $v$ in the above estimates, we infer that
\begin{equation*}
\|T_vu \|_{\dot{B}^1_{3,1}}\lesssim \| v\|_{\dot{H}^1}\| u\|_{\dot{B}^{ \frac{3}{2}}_{3,2}}.
\end{equation*}
Finally, for the remainder term, we proceed as follows
\begin{eqnarray*}
\|\dot{\Delta}_j(R(u,v)) \|_{ L^3}& \lesssim & \sum_{k\geq j-N_0} \| \tilde{\dot{\Delta}}_k u\|_{L^\infty}  \|\dot{\Delta}_k v \|_{L^3}\\
 &\lesssim&  2^{-j} \bigg(\sum_{k\geq j-N_0}  2^{j-k}c_k^2 \bigg) \| u\|_{\dot{B}^{-\frac{1}{2}}_{\infty,2}}\| v\|_{\dot{B}^{ \frac{3}{2}}_{3,2}},\\
 &\lesssim& 2^{-j}d_j   \| u\|_{\dot{H}^1}\| v\|_{\dot{B}^{ \frac{3}{2}}_{3,2}},\quad \sum_{j\in\ZZ} d_j \leq 1. 
\end{eqnarray*}
\end{proof}
\begin{Rema}
The uniqueness, part of Proposition \ref{prop.regularity Boussinesq} can be established as well for the general system \eqref{B-mu-general} by following the proof's approach of Theorem 1.3 from \cite{Danchin-Paicu1}, and by estimating the difference between two solutions still noted by $(\delta v, \delta \rho)$ in $\dot{B}^{-1}_{3,1} \times \dot{B}^{-1}_{3,1}$, one may notice that the proof of the uniqueness in Theorem 1.3 in \cite{Danchin-Paicu1} doesn't use at any step the special structure of the bilinear term.
\end{Rema}
 
 \section*{Acknowledgements}
 This work has been done while the first author is a PhD student at the University of Cote d'Azur-Nice-France, under the supervision of F. Planchon and P. Dreyfuss, in particular the first author would like to thank his supervisers, and the LJAD direction.\\

\end{document}